\newtheorem{theorem}{Theorem}[section]
\newtheorem{prop}[theorem]{Proposition}
\newtheorem{lem}[theorem]{Lemma}
\theoremstyle{definition}
\newtheorem{remark}[theorem]{Remark}
\newcommand{\R}{\ensuremath{\mathbb{R}}}
\newcommand{\Z}{\ensuremath{\mathbb{Z}}}
\newcommand{\os}{\mathbin{\overline{\ast}}}
\newcommand{\us}{\mathbin{\underline{\ast}}}
\newcommand{\s}{\mathbin{\ast}}
\newcommand{\Col}[2]{\mathrm{Col}_{#1}(#2)}
\newcommand{\ColD}[2]{\mathrm{Col}^{\Delta}_{#1}(#2)}
\newcommand{\Qdl}[1]{\mathcal{Q}(#1)}
\newcommand{\col}[1]{\mathcal{#1}}
\def\MARU#1{{\rm\ooalign{\hfil\lower.168ex
\hbox{#1}\hfil\crcr{$\bigcirc$}}}}
\long\def\dow#1{\leavevmode\setbox\@tempboxa\hbox{$#1$}
\@tempdima\fboxrule
\advance\@tempdima \fboxsep \advance\@tempdima \dp\@tempboxa
\hbox{\hskip\fboxsep \lower \@tempdima\hbox
{\vbox{
\hbox{ 
\vbox{\vskip\fboxsep \box\@tempboxa\vskip\fboxsep}\hskip
\fboxsep\vrule \@width \fboxrule}%
\hrule \@height \fboxrule}}}\hskip\fboxsep }
\long\def\up#1{\leavevmode\setbox\@tempboxa\hbox{$#1$}
\@tempdima\fboxrule
\advance\@tempdima \fboxsep \advance\@tempdima \dp\@tempboxa
\hbox{\hskip\fboxsep \lower \@tempdima\hbox
{\vbox{\hrule \@height \fboxrule 
\hbox{ 
\vbox{\vskip\fboxsep \box\@tempboxa\vskip\fboxsep}\hskip
\fboxsep\vrule \@width \fboxrule}%
}}}\hskip\fboxsep }
\begin{document}



\subjclass[2010]{Primary 
57M25; 
57Q45, 
Secondary 
57R40, 
}
\date{March 5, 2020.}
\keywords{}


\title[Quandle colorings vs. biquandle colorings]{Quandle colorings vs. biquandle colorings}


\author{Katsumi Ishikawa}
\address{Research Institute for Mathematical Sciences, Kyoto University, 
Kitashirakawa Oiwake-cho
Sakyo-ku, Kyoto 606-8502, Japan}
\email{katsumi@kurims.kyoto-u.ac.jp}

\author{Kokoro Tanaka}
\address{Department of Mathematics, Tokyo Gakugei University, 
Nukuikita 4-1-1, 
Koganei, Tokyo 184-8501, Japan}
\email{kotanaka@u-gakugei.ac.jp}



\begin{abstract}
Biquandles are generalizations of quandles. As well as quandles, biquandles give us many invariants for oriented classical/virtual/surface links. Some invariants derived from biquandles are known to be stronger than those from quandles for virtual links. 
However, we have not found an essentially refined invariant for classical/surface links so far.  
In this paper, we give an explicit one-to-one correspondence between biquandle colorings 
and quandle colorings for classical/surface links. 
We also show that biquandle homotopy invariants and quandle homotopy invariants are equivalent. 
As a byproduct, we can interpret biquandle cocycle invariants in terms of shadow quandle cocycle invariants. 
\end{abstract}

\maketitle


\section{Introduction}\label{sec:intro}

The knot group, which is the fundamental group of the complement of 
an oriented classical knot in $\R^3$ (or an oriented surface knot in $\R^4$), 
is one of the most important knot invariants. 
For example, the number of all homomorphisms from 
the knot group to a chosen finite group is known to be a basic knot invariant.  
When we read off the group presentation, called the Wirtinger presentation, 
of the knot group, all relations are described by conjugation. 
In this view, as long as we consider oriented classical/surface knots, the conjugate operation, rather than the product, has an essential meaning. 

Here, the notion of quandles arises naturally; a quandle, introduced in \cite{joy} (and in \cite{mat} as a distributive groupoid), is a set with a binary operation satisfying three axioms, which are basic properties of the conjugate operation of a group. 
The fundamental quandle of an oriented classical link is defined, and a homomorphism, which is called a coloring, from it to another quandle is described as an assignment of elements of the quandle to the arcs of a diagram of the link; 
the number of all colorings, called the quandle coloring number,
is a useful and powerful invariant for classical links.
Some refinements, e.g., quandle cocycle invariants \cite{cjkls} and quandle homotopy invariants \cite{nos1,nos2}, of the quandle coloring number 
are defined for oriented classical/surface links, and some of them are also valid for oriented virtual links.

The axioms of quandles also correspond to the three Reidemeister moves of link diagrams. 
However, from such a diagrammatic viewpoint, leaving topological meaning, we may cut each arc at crossing points and distinguish the divided pieces: we color the semi-arcs, rather than the arcs. 
This leads to the definition of biquandles \cite{fjk}, as a generalization of quandles: a biquandle has two binary operations corresponding to going over/under a crossing.
As with quandles, biquandles bring various invariants for oriented classical/surface/virtual links. 
See \cite{fjk,car,ces} for example. 

Some invariants derived from biquandles are known to be stronger than 
those from quandles for virtual links, 
but we have not found an essentially refined invariant for classical/surface links so far.  
Related to this problem, the first author \cite{is} proved that the fundamental biquandle of an oriented classical/surface link can be recovered from its fundamental quandle.
As a corollary, a biquandle coloring number is shown to be equal to the quandle coloring number of a certain quandle (this can be seen from \cite{sol}, too), 
and other invariants derived from biquandles are also expected to be rewritten by 
those from quandles. 
However, the relation of \cite{is} is defined in a quite algebraic way, so the induced correspondence between the biquandle colorings and the quandle colorings seems complicated and it looks difficult to find out relations between other invariants.

In this paper, we give an explicit one-to-one correspondence between biquandle colorings 
and quandle colorings for classical links in Theorem~\ref{thm:main}  
(and also for surface links in Theorem~\ref{thm:main2}). 
More precisely, for any biquandle $X$ and any oriented link diagram $D$, 
we explicitly describe a bijection between the set $\Col{X}{D}$ of the $X$-colorings and $\Col{\mathcal{Q}(X)}{D}$ of the $\mathcal{Q}(X)$-colorings, where $\mathcal{Q}$ 
is a functor from the category of biquandles to that of quandles.
Using this correspondence, we interpret biquandle cocycle invariants in terms of shadow quandle cocycle invariants (Theorem \ref{thm:bci}). Furthermore, we show that biquandle homotopy invariants and quandle homotopy invariants are equivalent (Theorem \ref{thm:bhi}) when they are identified via the one-to-one correspondence of Theorem \ref{thm:main}. Although Theorem \ref{thm:bci} follows from Theorem \ref{thm:bhi}, we explicitly give a formula for cocycles in Theorem \ref{thm:bci}.

We shall explain the one-to-one correspondence roughly. 
Details will be discussed in Section~\ref{sec:corr}. 
The direction from a quandle coloring to a biquandle coloring consists of three steps. 
See Figure~\ref{fig:pull-away}. 
Given a $\mathcal{Q}(X)$-coloring on $D$, we first regard it as an $X$-coloring on the \lq\lq doubled diagram $W(D)$\rq\rq\ of $D$. Then, we ``pull away'' the backward link to obtain the diagram $D \sqcup -D^h$ and an $X$-coloring on it, 
where $-D^h$ is the horizontal mirrored diagram of $D$ with the opposite orientation.  
Finally, the coloring on $D$ of $D \sqcup -D^h$ is the required biquandle coloring on $D$. 
The other direction is very simple. 
We ``push out'' each semi-arc of the diagram with an $X$-coloring to the unbounded region and 
read off the color; this turns out to be a $\mathcal{Q}(X)$-coloring. See Figure~\ref{fig:push-out}. 
\begin{figure}[thbp]
\[\begin{CD} 
\begin{minipage}{0.15\textwidth}
\includegraphics[width=\textwidth, pagebox=artbox]{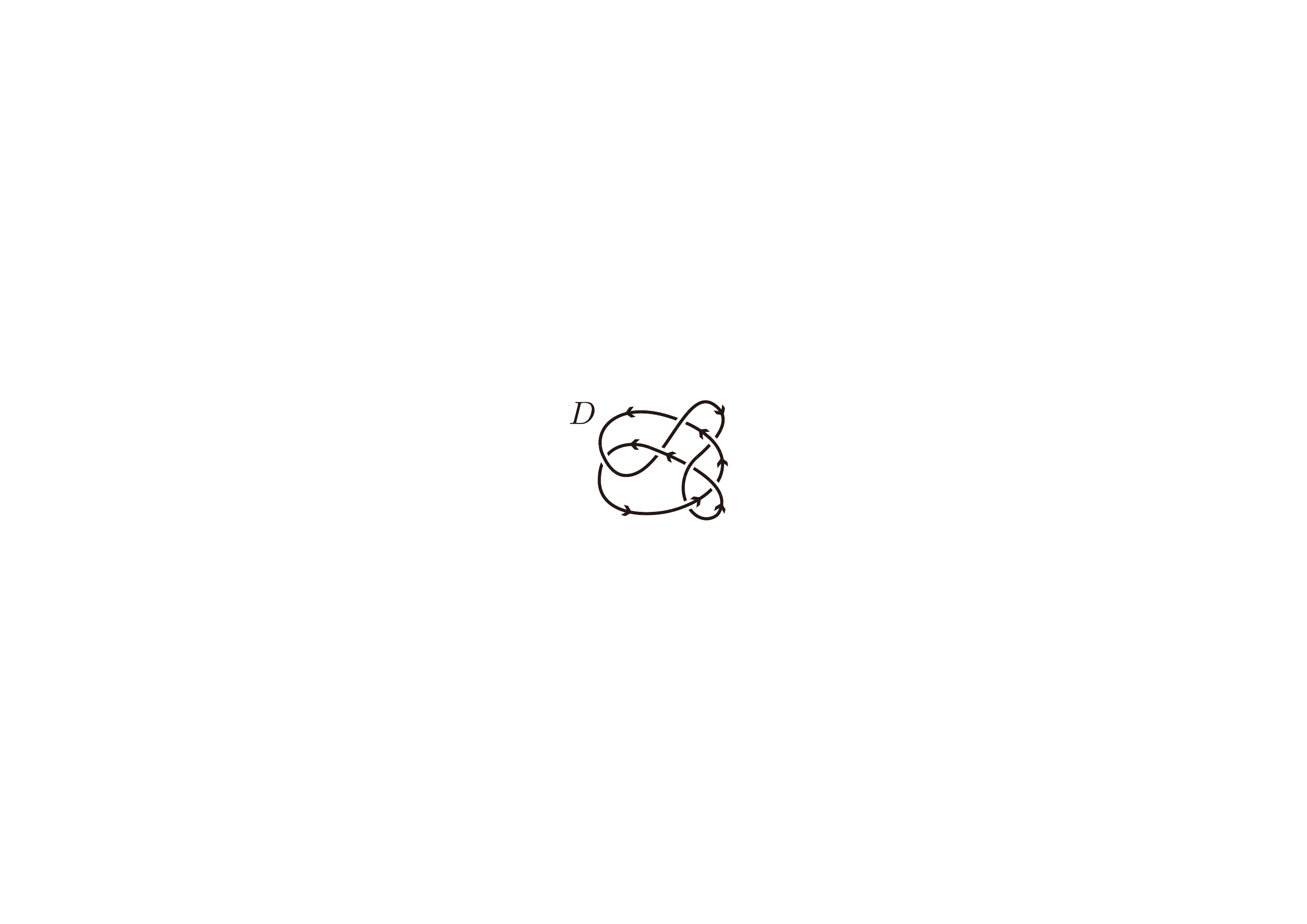} 
\\ \mbox{}
\end{minipage}\quad 
@>\mbox{\Large \lq\lq$\Psi$\rq\rq}>
\begin{minipage}{0.25\textwidth}
\mbox{}
\end{minipage}\quad 
> 
\begin{minipage}{0.15\textwidth}
\includegraphics[width=\textwidth, pagebox=artbox]{figure/original-diagram.pdf} 
\\ \mbox{}
\end{minipage}\\ 
@V\mbox{\Large \lq\lq$\Psi_1$\rq\rq}VV  
@AA\mbox{\Large \lq\lq$\Psi_3$\rq\rq}A  \\ 
\begin{minipage}{0.23\textwidth}
\includegraphics[width=\textwidth, pagebox=artbox]{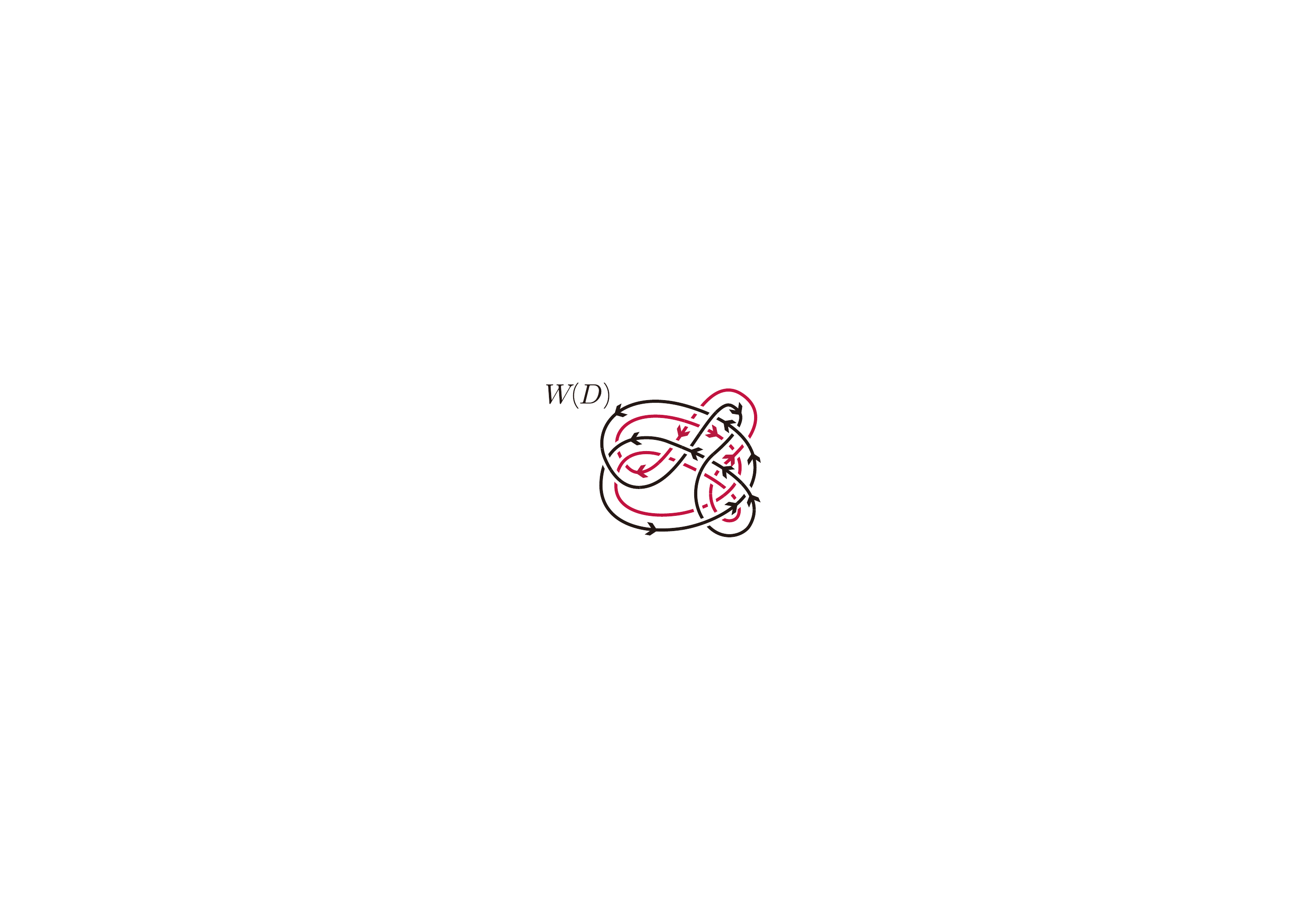} 
\\ \mbox{}
\end{minipage}\quad
@>
\begin{minipage}{0.30\textwidth}
\includegraphics[width=\textwidth, pagebox=artbox]{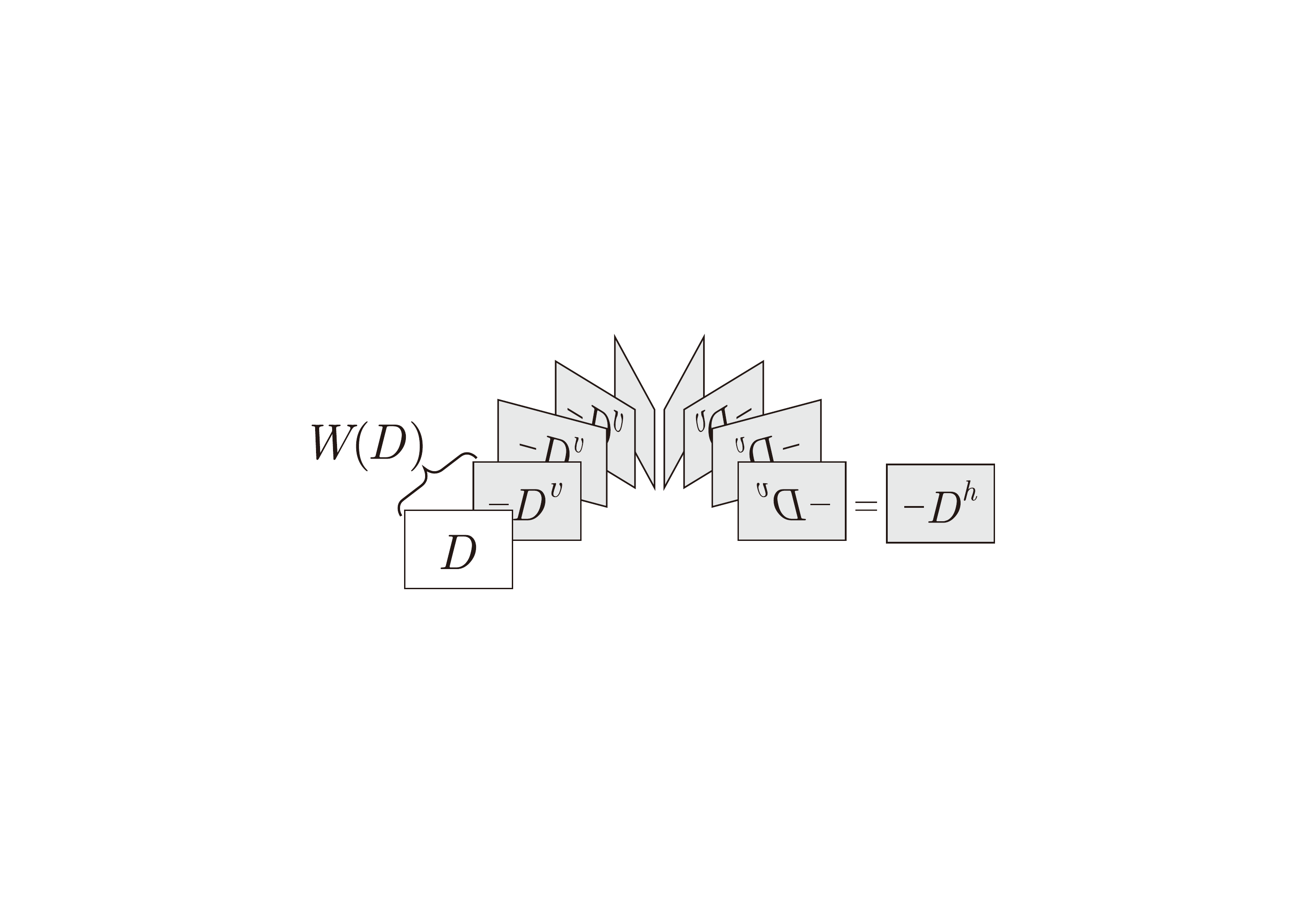} 
\end{minipage}\quad 
>\mbox{\Large \lq\lq$\Psi_2$\rq\rq}> 
\begin{minipage}{0.35\textwidth}
\includegraphics[width=\textwidth, pagebox=artbox]{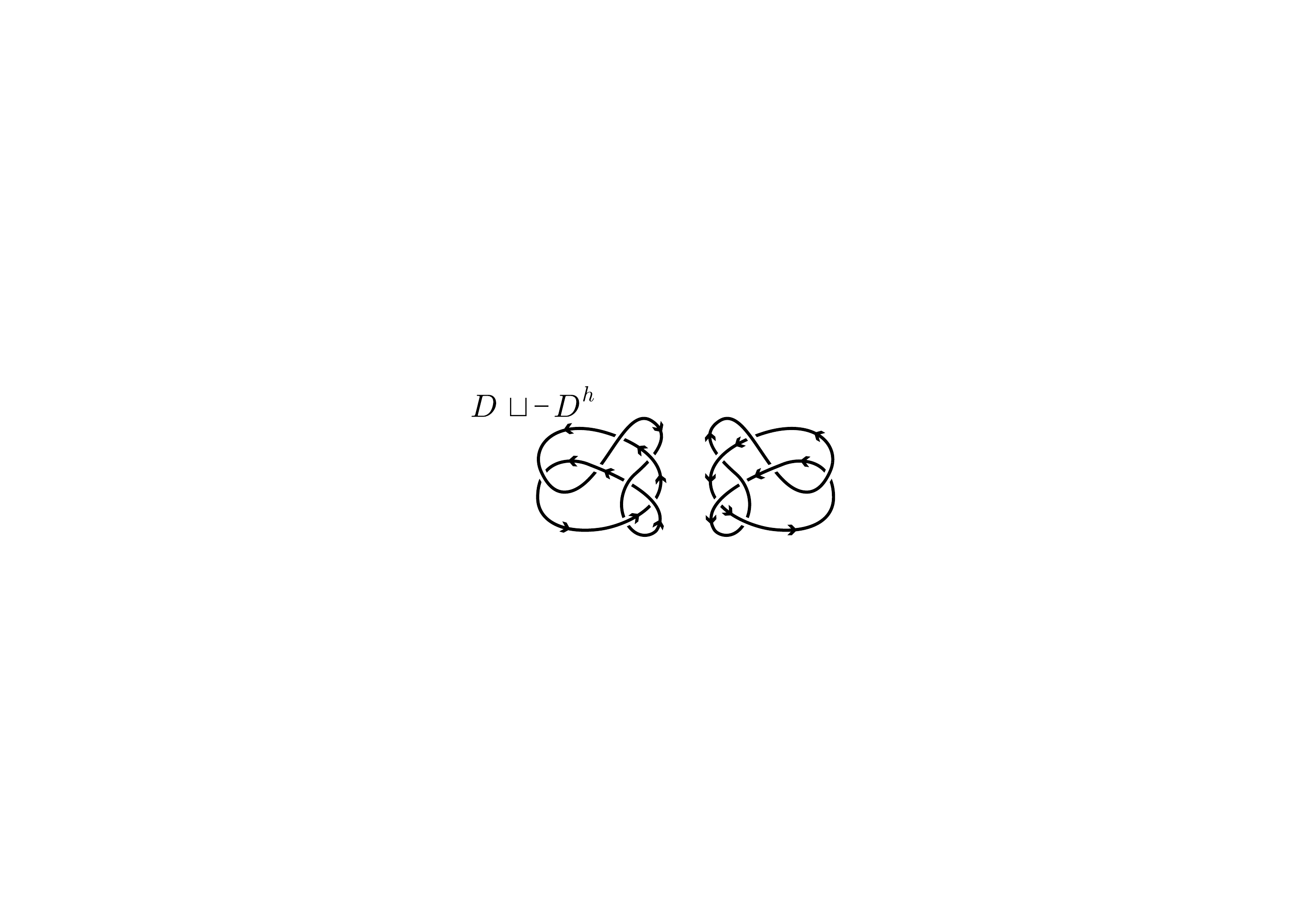} 
\\ \mbox{}
\end{minipage}
\end{CD}\]
\caption{The map $\Psi$}\label{fig:pull-away}
\end{figure}

This paper is organized as follows. In Section \ref{sec:def}, we recall basic definitions of quandles and biquandles. We then give the one-to-one correspondence between biquandle colorings and quandle colorings in Section \ref{sec:corr}. Although this definition seems to be dependent on the choice of a sequence of Reidemeister moves (see the definition of $\Psi_2$ in Section \ref{subsec:qtobq}), in fact it is independent; Section \ref{sec:alg} provides an algebraic approach to the one-to-one correspondence and especially show the independence of the map $\Psi_2$ (Lemma~\ref{lem:well-def}). 
The relation between cocycle invariants and that between homotopy invariants are discussed in Sections \ref{sec:ci} and \ref{sec:hi}, respectively. 
We give a rigorous definition of biquandle spaces (and consequently that of quandle spaces) 
in Subsection~\ref{subsec:BX} 
and point out problems on previously known definitions of quandle spaces in Remark~\ref{rem:q-space}.
Section~\ref{sec:2-dim} collects the three theorems for surface links similar to 
those (Theorems~\ref{thm:main}, \ref{thm:bci}, and \ref{thm:bhi}) for classical links. 
Appendix~\ref{sec:app} discusses a difference between the fundamental biquandle 
and the topological biquandle for oriented classical links, mentioned in Subsection~\ref{subsec:same_R-move}.    
In what follows, except in Section~\ref{sec:2-dim}, 
we call a classical link in $\R^3$ as a \textit{link} for simplicity.

\section{Definitions}\label{sec:def}
\subsection{(Bi)quandles}
A \textit{quandle} \cite{joy, mat} is a set $Q$ with a binary operation 
$\s \colon Q \times Q \rightarrow Q$ satisfying the following three axioms.
\begin{enumerate}
\item[(Q1)] For any $a \in Q$, we have $a \s a=a$. 
\item[(Q2)] For any $a \in Q$, 
the map ${}\s a \colon Q \to Q$, $\bullet \mapsto \bullet \s a$, is bijective. 
\item[(Q3)] For any $a,b,c \in Q$, we have $(a \s b) \s c=(a \s c) \s (b \s c)$.
\end{enumerate}
The axioms (Q1), (Q2), and (Q3) correspond to Reidemeister moves 
of types I, II, and III respectively. 
It follows from (Q2) that 
there exists a unique binary operation $\s^{\,-1} \colon Q \times Q \to Q$ 
satisfying 
\[
(a \s b) \s^{-1} b = (a \s^{-1} b) \s b = a 
\] 
for any $a,b \in Q$.

A \textit{biquandle} \cite{fjk} is a set $X$ with two binary operations 
$\us, \os \colon X \times X \rightarrow X$ 
satisfying the following three axioms.
\begin{enumerate}
\item[(BQ1)] For any $x \in X$, we have $x \us x = x \os x$.
\item[(BQ2)] For any $x \in X$, the two maps 
\[
{}\us x \colon X \to X,\ \bullet \mapsto \bullet \us x
\text{\quad and \quad}
\os x \colon X \to X,\ \bullet \mapsto \bullet \os x
\]
are bijective, and 
the map 
\[
H \colon X \times X \to X \times X,\ H(x,y)=(y \os x, x \us y) 
\] 
is also bijective. 
\item[(BQ3)] For any $x,y,z \in X$, we have
\[
\begin{array}{l}
(x \us y) \us (z \us y) = (x \us z) \us (y \os z), \\
(x \os y) \os (z \os y) = (x \os z) \os (y \us z), \\
(x \us y) \os (z \us y) = (x \os z) \us (y \os z).
\end{array}
\]
\end{enumerate}
The axioms (BQ1), (BQ2), and (BQ3) correspond to Reidemeister moves 
of types I, II, and III respectively.
It follows from the first half of (BQ2) that 
there exist unique binary operations $\us^{\,-1}, \os^{\,-1} \colon X \times X \to X$ 
satisfying 
\[
(x \us y) \us^{\,-1} y = (x \us^{\,-1} y) \us y = x 
\quad \text{and} \quad 
(x \os y) \os^{\,-1} y = (x \os^{\,-1} y) \os y = x 
\] 
for any $x,y \in X$.

\begin{remark}\label{rem:qbq}
For a quandle $(Q,\s)$, 
when we define a binary operation $\os \colon X \times X \to X$ by $x \os y := x$, 
the triplet $(Q, \s, \os)$ becomes a biquandle.  
Conversely, we sometimes call a biquandle $(X, \us, \os)$ a quandle if $x \os y = x$ for any $x,y \in X$.
\end{remark}

\subsection{(Bi)quandle colorings}
We review the concept of \textit{coloring}s, which are the main topic of this paper, 
of an oriented link diagram $D$ by a quandle $Q$ or a biquandle $X$. 

\par Let $\mathcal{A}(D)$ be the set of the arcs of $D$. 
A map $\mathcal{C} \colon \mathcal{A}(D) \to Q$ is a 
\textit{quandle coloring} if it satisfies the following relation at every crossing. 
Let $\alpha_i, \alpha_j, \alpha_k$ be the three arcs of $D$ around a crossing 
as in Figure~\ref{fig:crossing} (left). Then it is required that 
\[
\mathcal{C}(\alpha_i) \s \mathcal{C}(\alpha_j) = \mathcal{C}(\alpha_k) .
\]
Let $\Col{Q}{D}$ be the set of quandle colorings of the diagram $D$ 
by the quandle $Q$. 
Given a sequence of Reidemeister moves which takes a diagram $D$ to another $D'$, 
we have a bijection $\Col{Q}{D} \to \Col{Q}{D'}$. 
See \cite{fr, joy}. 
For example, the coloring number, the cardinality of the colorings, 
is an isotopy invariant of oriented links.

Divide over-arcs of $D$ at the crossings and 
call the resultant arcs \textit{semi-arc}s of $D$. 
Let $\mathcal{SA}(D)$ be the set of semi-arcs of $D$. 
We note that there is a natural surjection $\pi \colon 
\mathcal{SA}(D) \to \mathcal{A}(D)$. 
A map $\mathcal{C} \colon \mathcal{SA}(D) \to X$ is a 
\textit{biquandle coloring} if it satisfies the following relation at every crossing. 
Let $\sigma_i, \sigma_j, \sigma_k, \sigma_l$ be the four semi-arcs of $D$ around 
a crossing as in Figure~\ref{fig:crossing} (right). Then it is required that 
\[
\mathcal{C}(\sigma_i) \us \mathcal{C}(\sigma_j) = \mathcal{C}(\sigma_k) 
\text{\quad and \quad}
\mathcal{C}(\sigma_j) \os \mathcal{C}(\sigma_i) = \mathcal{C}(\sigma_l) .
\]
Let $\Col{X}{D}$ be the set of biquandle colorings of the diagram $D$ 
by the biquandle $X$. 
Given a sequence of Reidemeister moves which takes a diagram $D$ to another $D'$, we have a bijection $\Col{X}{D} \to \Col{X}{D'}$. 
See \cite{fjk}. 
For example, the coloring number, the cardinality of the colorings, 
is an isotopy invariant of oriented links.
We remark that a coloring by a quandle $Q$ can be considered as a biquandle coloring 
by composing the surjection $\pi$ and equipping $Q$ with the biquandle structure 
as in Remark~\ref{rem:qbq}. 
\begin{figure}[thbp]
\includegraphics[width=0.7\textwidth, pagebox=artbox]{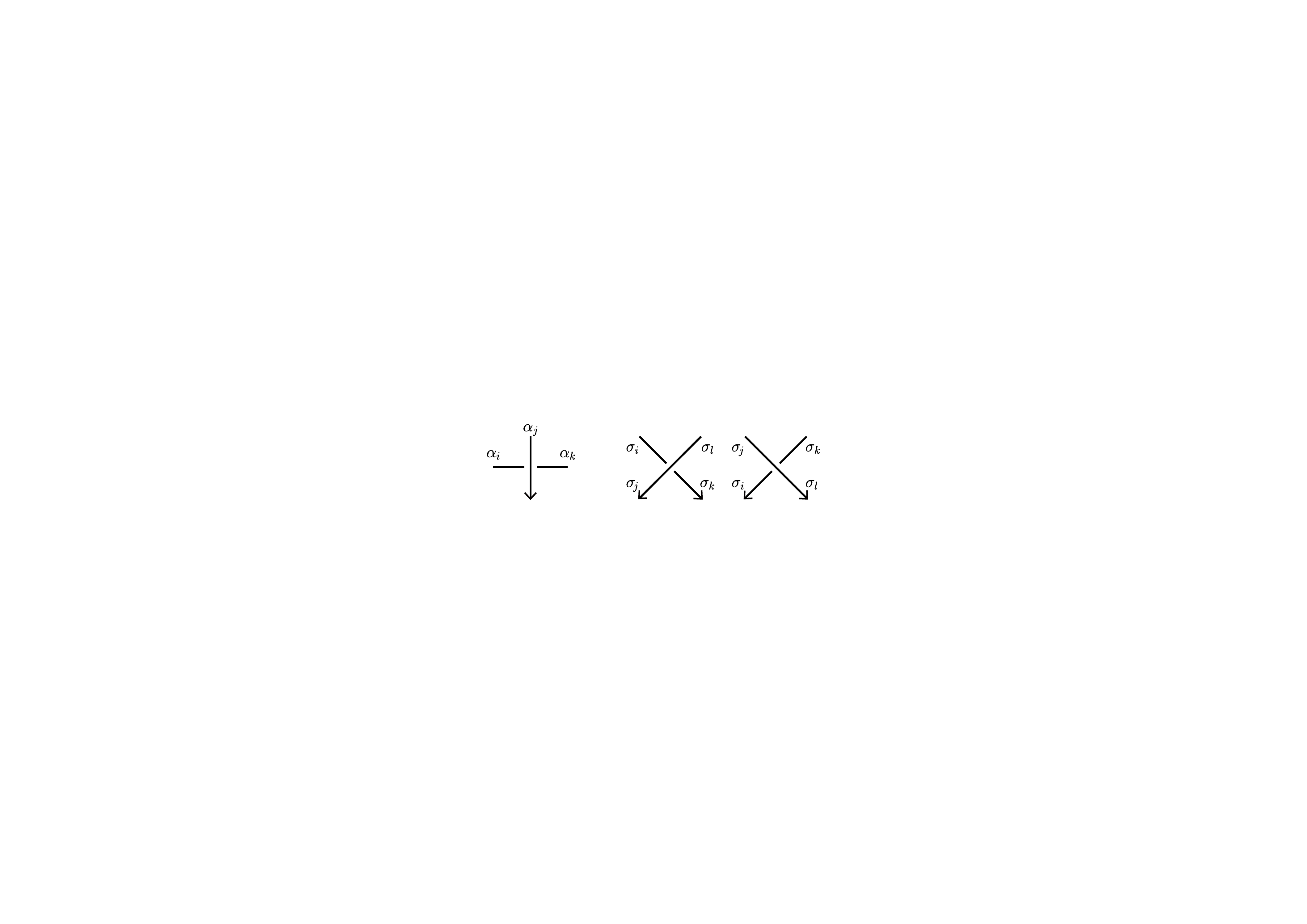}
\caption{Arcs and semi-arcs}\label{fig:crossing}
\end{figure}

\subsection{Reidemeister moves and colorings}\label{subsec:same_R-move}
We can also define a (bi)quandle coloring of a diagram of an oriented link $L$ as a homomorphism from the fundamental (bi)quandle of $L$ to the chosen (bi)quandle. 
See \cite{joy, mat} for fundamental quandles and 
\cite{fjk} for fundamental biquandles. 
Because of a topological definition of the fundamental quandle, the bijections between quandle colorings induced by ``topologically same'' sequences of Reidemeister moves are coincident: 
For two diagrams $D,D'$ of $L$, 
consider two sequences of Reidemeister moves that take $D$ to $D'$, and let $f_1,f_2$ be isotopies represented by the sequences. If $f_2^{-1} \cdot f_1$ is equivalent to the trivial motion in the sense of \cite{g}, 
then the bijections $f_1^Q,f_2^Q:\Col{Q}{D} \to \Col{Q}{D'}$ induced by the two sequences  coincide. 
In fact, this also holds for biquandle colorings:

\begin{prop}\label{prop:bqR}
Let $X$ be a biquandle. If two sequences of Reidemeister moves which take a diagram $D$ to another $D'$ are ``topologically same'', 
then the induced bijections $f_1^B,f_2^B: \Col{X}{D} \to \Col{X}{D'}$ are coincident. 
\end{prop}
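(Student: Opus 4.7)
The plan is to mirror the proof of the analogous fact for quandle colorings. There, the argument works because colorings of a diagram $D$ by a quandle $Q$ are in natural bijection with quandle homomorphisms $\mathcal{Q}(L)\to Q$, and the fundamental quandle $\mathcal{Q}(L)$ admits an intrinsically topological definition (as in \cite{joy,mat}). Any isotopy class of motions from $D$ to $D'$ therefore induces a canonical isomorphism of fundamental quandles, hence a canonical bijection on colorings, and two topologically same sequences of Reidemeister moves represent the same isotopy class.

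For biquandles, I would set up an analogous topologically-defined fundamental biquandle $\mathcal{B}(L)$ for which $\Col{X}{D}$ is naturally identified with the set of biquandle homomorphisms $\mathcal{B}(L)\to X$. A sequence of Reidemeister moves taking $D$ to $D'$ that represents an isotopy $f$ would then induce the bijection $f^B\colon \Col{X}{D}\to \Col{X}{D'}$ via precomposition with an isomorphism $f_{*}\colon \mathcal{B}(L)\to \mathcal{B}(L)$ determined by the ambient-isotopy class of $f$. Since topologically same sequences (in the sense of \cite{g}) give $f_{1*}=f_{2*}$ by functoriality, the induced bijections $f^{B}_{1}$ and $f^{B}_{2}$ necessarily agree.

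The main obstacle is producing a topological description of the fundamental biquandle that actually computes $\Col{X}{D}$ correctly. As flagged in Appendix~\ref{sec:app}, the diagrammatic fundamental biquandle from \cite{fjk} and a naive topologically-defined biquandle of an oriented classical link need not coincide, so one has to proceed with care. The cleanest route is via the main result of \cite{is} cited in the introduction: since the fundamental biquandle of a classical link can be recovered from its (topologically defined) fundamental quandle, the desired invariance for biquandle colorings reduces to the already-established invariance for quandle colorings. A more direct alternative would be to define $\mathcal{B}(L)$ in terms of the fundamental groupoid of the link complement with two basepoints per arc (above and below the link in a regular neighborhood), and verify its compatibility with the diagrammatic biquandle, but that identification is precisely the subtle point that the appendix addresses.
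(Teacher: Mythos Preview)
Your high-level strategy---reduce the biquandle statement to the known quandle statement via an identification of $\Col{X}{D}$ with quandle colorings---is exactly what the paper does. But your proposal skips the step that makes the reduction work: one must verify that the bijection between biquandle colorings and quandle colorings \emph{commutes with the bijections induced by each Reidemeister move}. Merely knowing from \cite{is} that the fundamental biquandle can be recovered from the fundamental quandle yields a bijection diagram by diagram; it does not by itself give the commutativity of
\[
\begin{CD}
\Col{\Qdl{X}}{D} @>{f_i^Q}>> \Col{\Qdl{X}}{D'} \\
@VVV @VVV \\
\Col{X}{D} @>{f_i^B}>> \Col{X}{D'}
\end{CD}
\]
for each $i$, and without that naturality the equality $f_1^Q=f_2^Q$ cannot be transported to $f_1^B=f_2^B$.

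The paper fills precisely this gap. It constructs an explicit bijection $\Psi\colon\Col{\Qdl{X}}{D}\to\Col{X}{D}$ (Theorem~\ref{thm:main}), rewrites it via the algebraic formula $\Psi(\mathcal{C})=\psi\circ\mathcal{C}^S$ (Lemma~\ref{lem:psi=psi}), and observes that this formula is manifestly compatible with Reidemeister moves because shadow colorings are (Lemma~\ref{lem:compatibility}). The proof of Proposition~\ref{prop:bqR} is then the one-line conjugation $f_i^B=\Psi\circ f_i^Q\circ\Psi^{-1}$. Your route through \cite{is} could in principle be completed, but you would still have to check that the isomorphism between the diagrammatic fundamental biquandle and the one built functorially from $Q_L$ is natural for Reidemeister moves---and that verification is essentially the content of Lemma~\ref{lem:compatibility}, not something that comes for free. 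Your alternative via a two-basepoint topological biquandle is, as you correctly note and as Appendix~\ref{sec:app} confirms, obstructed: the topological biquandle of \cite{hor} is not the fundamental biquandle even for the unknot.
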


\noindent 
However, we do not know a topological definition of the fundamental biquandle 
and hence this proposition is far from trivial; we give a proof in Subsection \ref{subsec:alg-psi}. 
We remark here that Horvat \cite{hor} introduced the 
\lq\lq \textit{topological biquandle} $\widehat{\mathcal{B}}_L$\rq\rq\ of an oriented link $L$. 
Although $\widehat{\mathcal{B}}_L$ can be expressed as a quotient of the fundamental biquandle 
of $L$, unfortunately, they are not isomorphic. See Appendix~\ref{sec:app}.

\section{One-to-one correspondence}\label{sec:corr}
For a biquandle $X=(X,\us,\os)$, 
we define a binary operation $\s \colon X \times X \to X $ by 
\[
x \s y := (x \us y) \os^{\,-1} y \quad (x,y \in X), 
\]
and denote the pair $(X, \s)$ by $\Qdl{X}$. 
Then it is known in \cite{ash} that $\Qdl{X}$ becomes a quandle. 
We remark here that (the solution associated with) $\Qdl{X}$ has been known 
as the \textit{derived solution}~\cite{sol} of (the solution associated with) $X$ 
in the research area of set-theoretical Yang--Baxter equations. 
See \cite{lv2} for recent development.
The following is the main result of this paper: 

\begin{theorem}\label{thm:main}
Let $X$ be a biquandle and $D$ be a diagram of an oriented link. 
Then there exists a one-to-one correspondence 
between $\Col{X}{D}$ and $\Col{\Qdl{X}}{D}$. 
\end{theorem}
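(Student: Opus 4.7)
The plan is to construct explicit maps
\[
\Phi\colon \Col{X}{D} \to \Col{\Qdl{X}}{D} \quad \text{(``push-out'')} \qquad \text{and} \qquad \Psi\colon \Col{\Qdl{X}}{D} \to \Col{X}{D} \quad \text{(``pull-away'')},
\]
realizing the informal procedures described after the theorem statement, and then to show they are mutually inverse.

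For $\Phi$, given an $X$-coloring $\mathcal{C}$ on $D$ and an arc $\alpha$ of $D$, I would fix any semi-arc $\sigma \subset \alpha$ and choose a generic path in the plane from $\sigma$ to the unbounded region of $D$, transverse to $D$ and avoiding the crossings. The running color is initialized to $\mathcal{C}(\sigma)$ and updated to $x \s y = (x \us y) \os^{\,-1} y$ (or $x \s^{\,-1} y$, depending on orientation) each time the path crosses under a semi-arc colored $y$. I would then define $\Phi(\mathcal{C})(\alpha)$ to be the terminal color. Two local verifications establish well-definedness: homotoping the path across a crossing of $D$ preserves the terminal color, via an identity that follows from (BQ3) and the definition of $\s$, while switching to an adjacent semi-arc $\sigma' \subset \alpha$ (that is, crossing one over-passing of $\alpha$) is compensated by the change in the undercrossings encountered by the new path. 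Verifying that $\Phi(\mathcal{C})$ satisfies the $\Qdl{X}$-coloring relation at each crossing of $D$ is then a direct local computation using (BQ3).

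For $\Psi$, I would follow the three-step recipe of Figure~\ref{fig:pull-away}. Step $\Psi_1$ lifts the given $\Qdl{X}$-coloring to an $X$-coloring on the doubled diagram $W(D)$, the colors of the extra semi-arcs being determined from the original arc colors via the biquandle operations at the doubled crossings. Step $\Psi_2$ transports this coloring along a sequence of Reidemeister moves that separates $W(D)$ into the disjoint union $D \sqcup (-D^h)$, and step $\Psi_3$ restricts to the $D$-component.

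The main obstacle is that $\Psi_2$ a priori depends on the chosen sequence of Reidemeister moves. I would defer this issue to the algebraic reformulation developed in Section~\ref{sec:alg}, whose Lemma~\ref{lem:well-def} provides the necessary independence. With $\Psi$ well-defined, the two identities $\Phi \circ \Psi = \mathrm{id}$ and $\Psi \circ \Phi = \mathrm{id}$ reduce to local tracking of colors: in $\Psi(\mathcal{C})$, the mirror component $-D^h$ records precisely the sequence of $\s$-operations that $\Phi$ carries out when pushing a semi-arc out to infinity, so pushing out and doubling are mutually inverse procedures.
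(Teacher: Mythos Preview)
Your overall architecture---define $\Phi$ and $\Psi$ as push-out and pull-away and show they are inverse---matches the paper. But there is a genuine error in your description of $\Phi$.

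When a semi-arc $\sigma$ is pushed out to the unbounded region, the pushed-out portion passes \emph{over} every semi-arc $\tau$ it meets. Hence the running color changes by the over-strand operation $\os^{\pm 1}$, not by the derived quandle operation $\s$. Algebraically (see Subsection~\ref{subsec:alg-phi}) the paper sets $\Phi(\mathcal C)(\sigma)=\mathcal C(\sigma)\diamond g^{-1}$ with $x\diamond y=x\os y$ and $g=\mathcal C^R(\rho_\sigma)$; no $\s$ appears. Your rule ``update $x$ to $x\s y$'' would already fail the basic well-definedness check you invoke: in a biquandle coloring the two semi-arcs of an over-arc at a crossing carry \emph{different} colors $b$ and $b\os a$, so sliding the path across that crossing changes the $y$ you would act by. The identity you claim ``follows from (BQ3)'' does not hold for $\s$ applied to biquandle colors; it holds for $\os$ precisely because the four semi-arc colors at a crossing satisfy the biquandle relations. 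Once $\Phi$ is defined with the wrong operation, the later claim that ``the mirror component $-D^h$ records precisely the sequence of $\s$-operations that $\Phi$ carries out'' cannot be correct either---the mirror component records $\os$-operations.

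Two further points where your plan diverges from the paper. First, the paper does \emph{not} verify $\Psi\circ\Phi=\mathrm{id}$; it shows $\Phi\circ\Psi=\mathrm{id}$ and then proves directly that $\Phi$ is injective (the ``leftmost colors determine the whole coloring'' argument of Figure~\ref{fig:hpo}). This sidesteps the harder direction entirely, and your one-sentence justification for $\Psi\circ\Phi=\mathrm{id}$ is not a proof. Second, the paper's proof of Theorem~\ref{thm:main} does not rely on Lemma~\ref{lem:well-def}: it \emph{fixes} a specific, strip-by-strip sequence of Reidemeister moves for $\Psi_2$ (beginning of Subsection~\ref{subsec:proof}) so that the bijectivity argument is self-contained and purely diagrammatic; well-definedness of $\Psi_2$ is established only afterwards.
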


To prove Theorem~\ref{thm:main}, 
we define a map $\Psi \colon \Col{\Qdl{X}}{D} \to \Col{X}{D}$ in Subsection~\ref{subsec:qtobq} and a map $\Phi \colon \Col{X}{D} \to \Col{\Qdl{X}}{D}$ in Subsection~\ref{subsec:bqtoq}, 
and check that 
one of the two maps is the inverse of the other 
in Subsection~\ref{subsec:proof}.

\begin{remark}
In Theorem \ref{thm:main}, the correspondence $\Psi \colon \Col{\Qdl{X}}{D} \to \Col{X}{D}$ is in fact natural for Reidemeister moves; see Lemma \ref{lem:compatibility} for details.
\end{remark}

\subsection{From quandle colorings to biquandle colorings}\label{subsec:qtobq}
For an oriented link diagram $D$ and a biquandle $X$, 
we give a definition of 
a map $\Psi \colon \Col{\Qdl{X}}{D} \to \Col{X}{D}$, 
which turns out to be bijective,
by composing three maps as described below. 
See also Figure~\ref{fig:pull-away} in Section~\ref{sec:intro}. 

Let us define diagrams $-D$, $D^v$, $D^h$ and $W(D)$ for $D$ as follows. 
See Figure~\ref{fig:mirror-and-double}. 
We define the inverted diagram $-D$ 
to be the same diagram as $D$ with the opposite orientation, and 
the vertical mirrored diagram $D^v$ 
to be the same diagram as $D$ with the opposite crossing information. 
The horizontal mirrored diagram $D^h$ is defined to be $m(D)$, where 
$m \colon \R^2 \to \R^2$, $(x,y) \mapsto (-x,y)$, is an involution.  
We remark that $D^h$ represents the same oriented link as the diagram $D^v$ does. 
For an oriented link diagram $D$, 
we put the diagram $-D^v$ behind $D$ 
by pushing $-D^v$ slightly into normal direction of $D$, and 
denote the resultant diagram, called the \textit{doubled diagram} of $D$, by $W(D)$. 
We remark that $W(D)$ represents the same oriented link as 
the diagram $D \sqcup -D^h$ does, 
where $D \sqcup -D^h$ is a disjoint union of $D$ and $-D^h$. 
\begin{figure}[thbp]
\begin{tabular}{c}
\includegraphics[width=0.8\textwidth, pagebox=artbox]{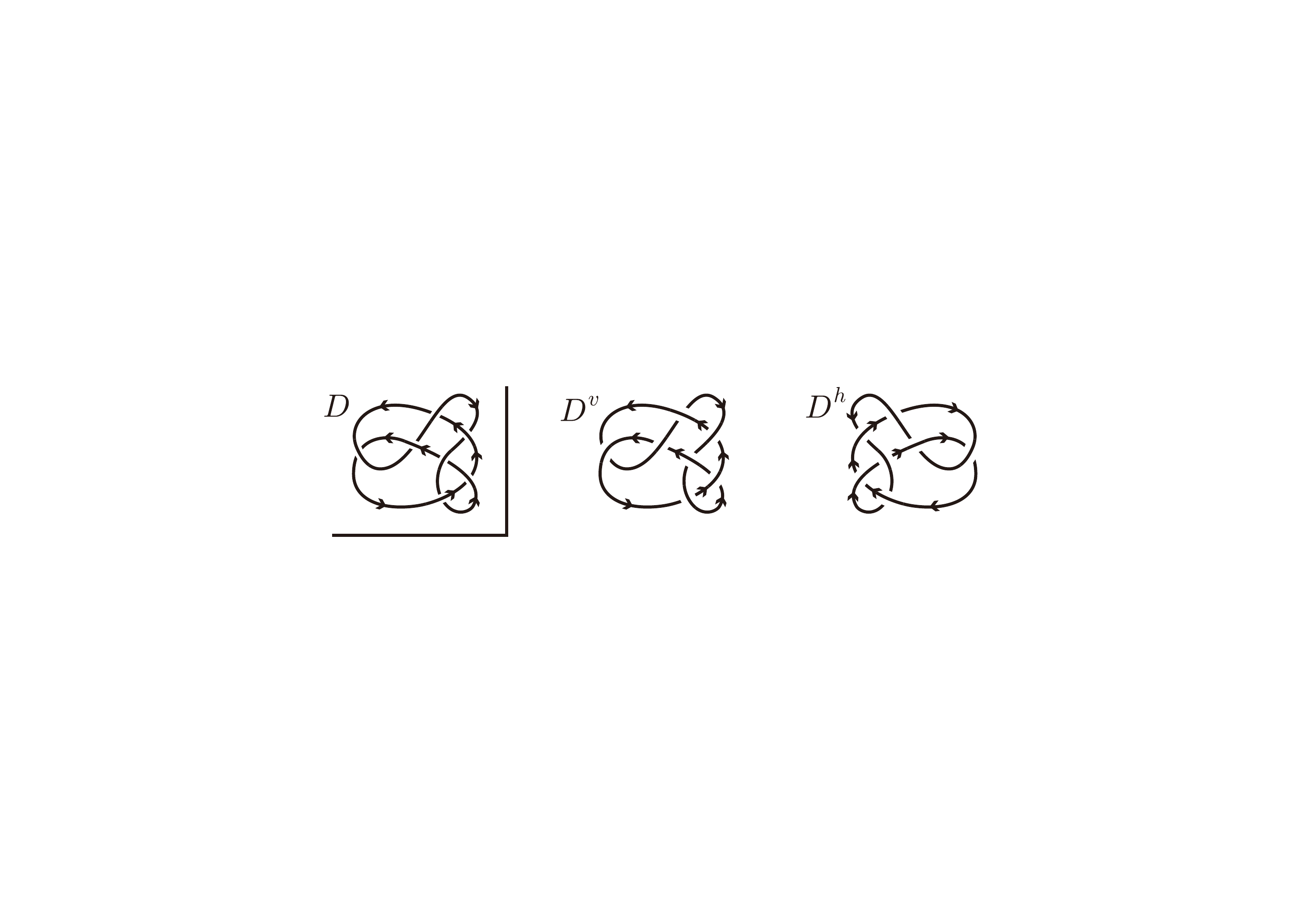} \\ \\
\includegraphics[width=0.8\textwidth, pagebox=artbox]{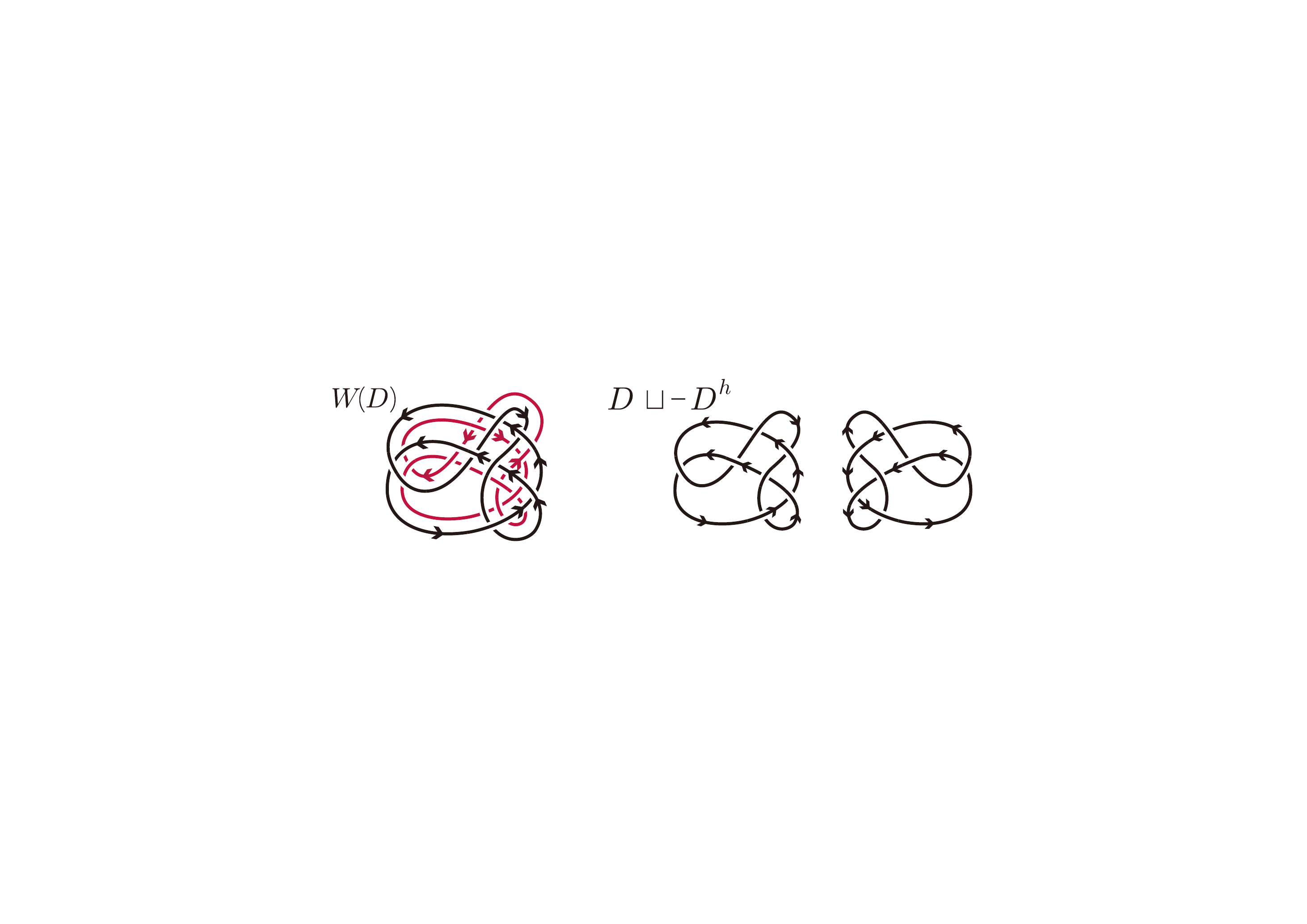}
\end{tabular}
\caption{Mirrored diagrams and the doubled diagram}\label{fig:mirror-and-double}
\end{figure}

\begin{remark}
For an oriented virtual link diagram $D$, we can define diagrams 
$-D$, $D^v$, $D^h$ and $W(D)$ in a similar way. 
However, since $D^h$ and $D^v$ may represent different virtual links,  
$W(D)$ and $D \sqcup -D^h$ may also represent different ones.
\end{remark}

Let us construct a first bijection  $\Psi_1$ from $\Col{\Qdl{X}}{D}$ 
to a certain subset of $\Col{X}{W(D)}$. 
We associate each quandle coloring in  $\Col{\Qdl{X}}{D}$ 
with a biquandle coloring in $\Col{X}{W(D)}$ as follows. 
When an arc of $D$ receives an element, say $a \in \Qdl{X}$, 
by a quandle coloring in $\Col{\Qdl{X}}{D}$, 
we assign the same element $a \in X =\Qdl{X}$ (as a set) to the pair of paralleled semi-arcs of $W(D)$ 
originated from the arc except near crossings of $D$. 
In order to satisfy the condition of biquandle colorings of $W(D)$ by $X$, 
each of other semi-arcs of $W(D)$, which are originated from crossings of $D$, 
must receive a unique element in $X$. 
Then we obtain a unique biquandle coloring in $\Col{X}{W(D)}$. 
See Figure~\ref{fig:double-color}. 
Hence we now have a (beautiful and natural) injection 
from $\Col{\Qdl{X}}{D}$ to $\Col{X}{W(D)}$. 
To describe its image, 
let $\ColD{X}{W(D)}$ denote the set of all biquandle colorings of $W(D)$ by $X$ 
such that each pair of paralleled semi-arcs of $W(D)$ 
receives the same color except near crossings, which are originated from crossings of $D$, 
as in Figure~\ref{fig:double-color}. 
We should notice that, for such a biquandle coloring, the condition which the colors of the four pairs of the semi-arcs of $W(D)$ 
originated from one crossing of $D$ have to satisfy is equivalent to 
the quandle coloring condition at the original crossing of $D$. 
Then the above injection has $\ColD{X}{W(D)}$ as its image and 
induces a bijection $\Psi_1 \colon \Col{\Qdl{X}}{D} \to \ColD{X}{W(D)}$. 
\begin{figure}[thbp]
\includegraphics[width=0.8\textwidth, pagebox=artbox]{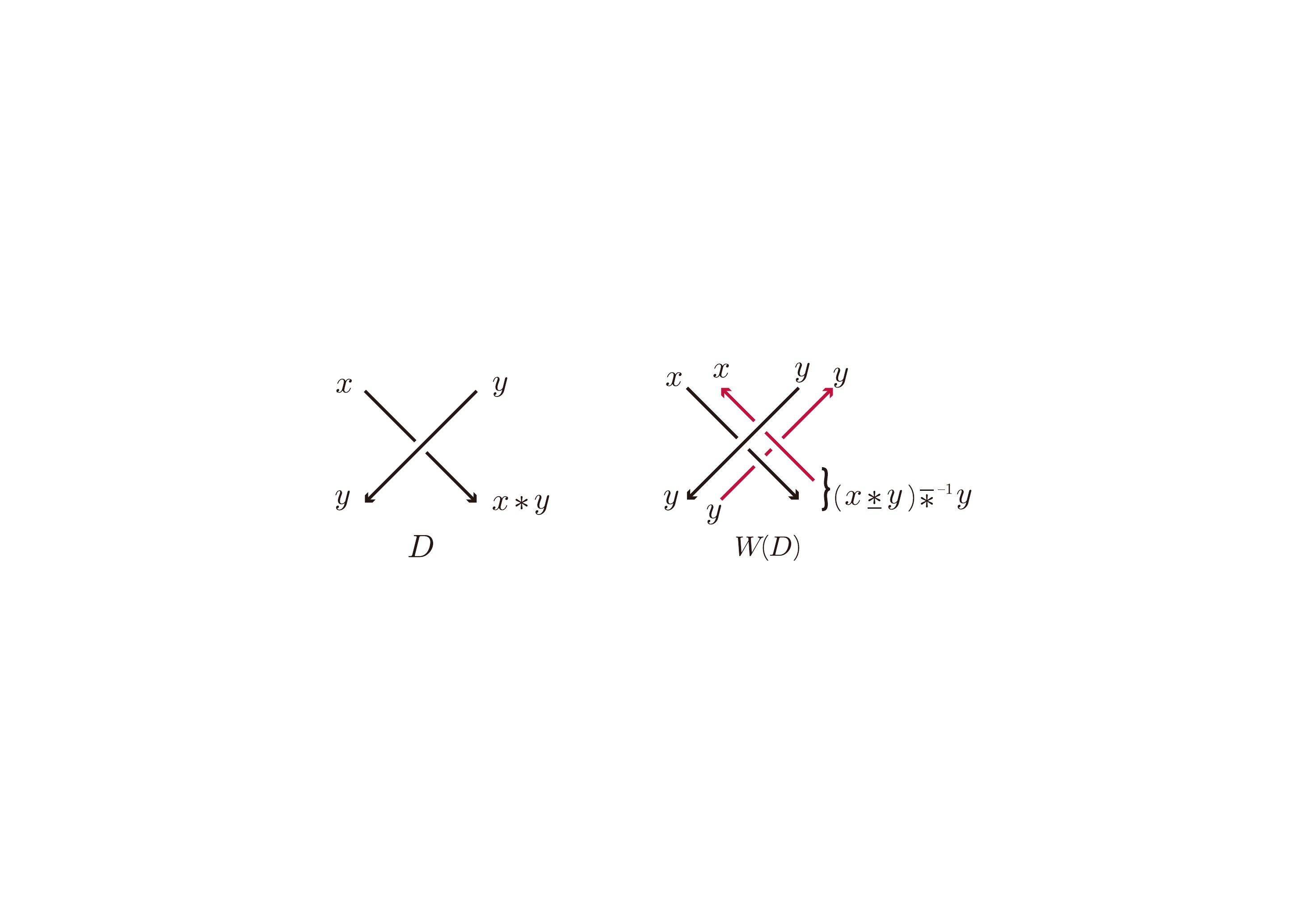}
\caption{The map $\Psi_1$ is 
\lq\lq taking the doubled diagram\rq\rq}\label{fig:double-color}
\end{figure}

Let us construct a second bijection $\Psi_2$ from $\ColD{X}{W(D)}$
to a certain subset of $\Col{X}{D \sqcup -D^h}$.  
Since $W(D)$ and  $D \sqcup -D^h$ represent the same oriented link, 
we have the bijection
$\Psi'_2 \colon \Col{X}{W(D)} \to \Col{X}{D \sqcup -D^h}$ 
associated with a sequence of Reidemeister moves 
that flips $-D^v$ to $-D^h$, and at the same time, pulls away $-D^v$ from $W(D)$:
more precisely, let $L$ and $-L^v$ be oriented links respectively represented by $D$ and $-D^v$ and assume that $L$ (resp.~$-L^v$) is contained in a ball in $\mathbb{R}^3_+$ (resp.~$\mathbb{R}^3_-$) so that $L \sqcup -L^v$ is represented by $W(D)$; 
take an isotopy that gives a half rotation and horizontal translation to the ball in $\mathbb{R}^3_-$, and perturb it to obtain a sequence of Reidemeister moves taking $W(D)$ to $D \sqcup -D^h$, which defines the bijection. 
See the second row of Figure~\ref{fig:pull-away} in Section~\ref{sec:intro}. 
To describe the image of $\ColD{X}{W(D)}$, 
we observe a relation between $\Col{X}{D}$ and $\Col{X}{-D^h}$. 
For a biquandle coloring $\mathcal{C} \in \Col{X}{D}$, 
the composite $\mathcal{C} \circ m^{-1}$, denoted by $\mathcal{C}^*$, 
turns out to be a biquandle coloring in $\Col{X}{-D^h}$ by Figure~\ref{fig:mirror-color}, 
where we regard the involution $m \colon \R^2 \to \R^2$ 
as the natural bijection from $\mathcal{SA}(D)$ to 
$\mathcal{SA}(D^h) = \mathcal{SA}(-D^h)$ by abuse of notation.  
We define $\mathcal{C} \sqcup \mathcal{C}^* \in \Col{X}{D \sqcup -D^h}$ by 
\[
(\mathcal{C} \sqcup \mathcal{C}^*) |_{\mathcal{SA}(D)} = \mathcal{C} 
\text{\quad and \quad}
(\mathcal{C} \sqcup \mathcal{C}^*) |_{\mathcal{SA}(-D^h)} = \mathcal{C}^* , 
\]
and $\ColD{X}{D \sqcup -D^h}$ to be the set of biquandle colorings  
$\mathcal{C} \sqcup \mathcal{C}^*$ for all $\mathcal{C} \in \Col{X}{D}$. 
As we see later, 
the bijection $\Psi'_2$ is independent of the choice of the Reidemeister moves (Lemma \ref{lem:well-def}) and the image of $\ColD{X}{W(D)}$ is equal to $\ColD{X}{D \sqcup -D^h}$ (Lemma \ref{lem:im-psi2}). 
Then, we denote the restriction by $\Psi_2 \colon \ColD{X}{W(D)} \to \ColD{X}{D \sqcup -D^h}$.
\begin{figure}[thbp]
\includegraphics[width=0.7\textwidth, pagebox=artbox]{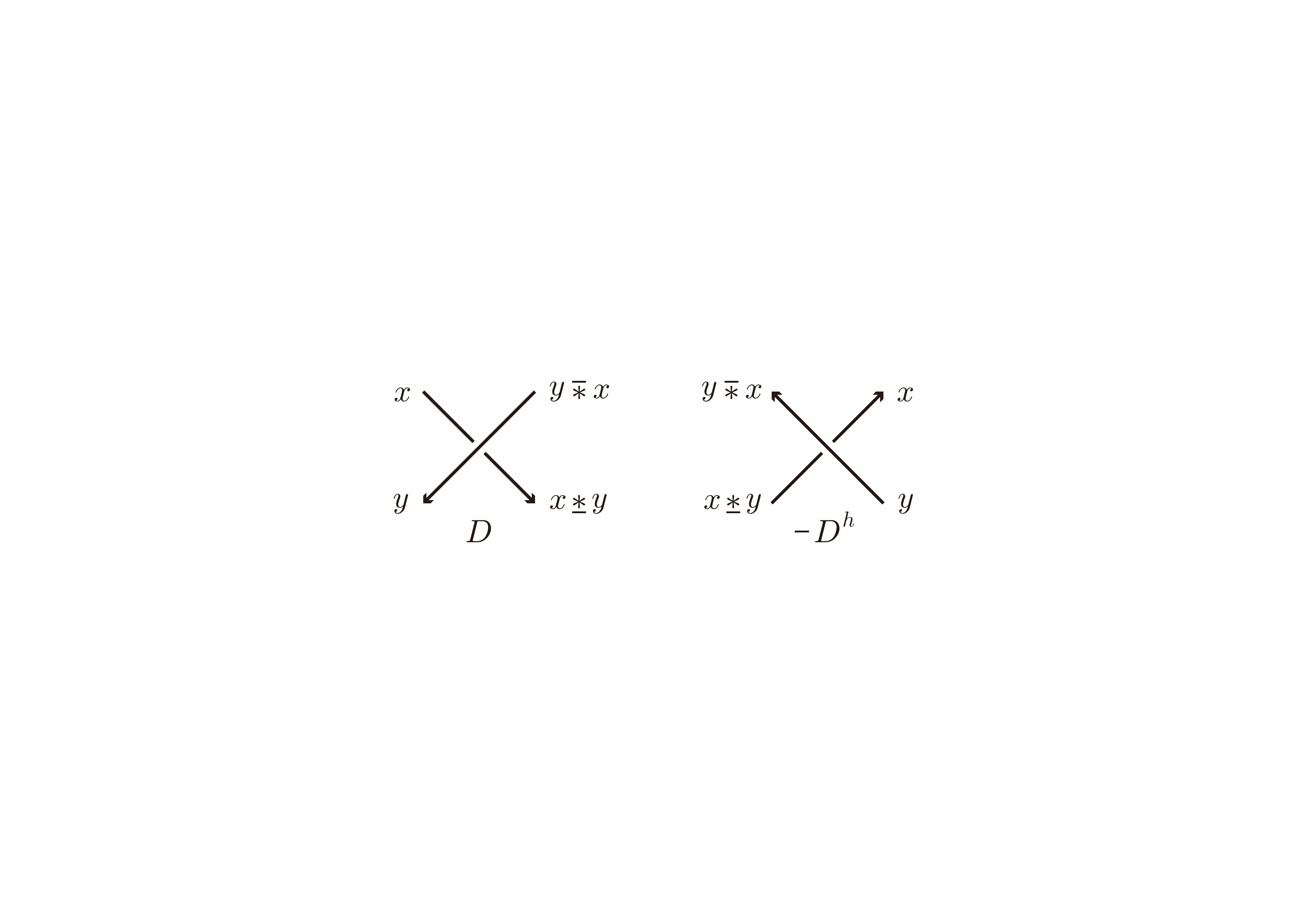}
\caption{Natural coloring for $-D^h$}\label{fig:mirror-color}
\end{figure}

Let us construct a third bijection 
$\Psi_3$ from $\ColD{X}{D \sqcup -D^h}$ to $\Col{X}{D}$. 
Let $\Psi'_3: \Col{X}{D \sqcup -D^h} \to \Col{X}{D}$ denote the restriction. 
Then, we define $\Psi_3 \colon \ColD{X}{D \sqcup -D^h} \to \Col{X}{D}$ to be the bijection induced by $\Psi'_3$.

Finally, we define $\Psi: \Col{\Qdl{X}}{D} \to \Col{X}{D}$ to be the composite $\Psi'_3 \circ \Psi'_2 \circ \Psi_1.$ Although the well-definedness of $\Psi'_2$ is not shown, we can define $\Psi'_2$ (and hence $\Psi$) by fixing a sequence of Reidemeister moves as above. After identifying the image of $\ColD{X}{W(D)}$ under $\Psi'_2$, $\Psi$ can be defined as $\Psi_3 \circ \Psi_2 \circ \Psi_1$ and then we immediately find $\Psi$ to be a bijection. However, we need some algebraic arguments for the well-definedness, 
and in Subsection~\ref{subsec:proof} 
we give a diagrammatic proof of bijectivity without such an argument.

\subsection{From biquandle colorings to quandle colorings}\label{subsec:bqtoq}
For an oriented link diagram $D$ and a biquandle $X$, 
we give a definition of a bijection $\Phi \colon \Col{X}{D} \to \Col{\Qdl{X}}{D}$ 
by pushing out each semi-arc to the unbounded region
and reading off the color of the resultant semi-arc as explained below. 
See Figure~\ref{fig:push-out}.

Let $\mathcal{C}$ be a biquandle coloring of $D$ by a biquandle $X$. 
For each semi-arc $\sigma$ of $D$, there are the two regions divided by $\sigma$ 
and a normal vector of $\sigma$ directs from one to the other; 
the former region is called the \textit{specified region} of $\sigma$. 
Take an arc $\gamma$ from the unbounded region 
to the semi-arc $\sigma$, which misses crossings, intersects semi-arcs $\tau_1,\ldots,\tau_k$ transversely in this order, and gets to $\sigma$ from the side of the specified region; for simplicity, we assume $\gamma$ to have no self-crossing. 
Then we push out the semi-arc $\sigma$ along the arc $\gamma$ and read off the color, 
say $a$ in $X( = \Qdl{X})$, of the resultant semi-arc $\tilde{\sigma}$, 
whose specified region equals the unbounded region, as in Figure~\ref{fig:push-out}. 
We note that the color is uniquely determined by the colors 
of the semi-arcs $\sigma$, $\tau_1, \ldots ,\tau_k$. 
Then, we put $( \Phi(\mathcal{C}) ) (\sigma) = a$. 
As shown below, $( \Phi(\mathcal{C}) )(\sigma) \in \Qdl{X}$ does not depend on the choice of $\gamma$ (Lemma \ref{lem:phi1}), and the map $\Phi(\mathcal{C}): \mathcal{SA}(D) \to \Qdl{X}$ defines a $\Qdl{X}$-coloring (Lemma \ref{lem:phi2}). 
Thus, we obtain a map $\Phi: \Col{X}{D} \to \Col{\Qdl{X}}{D}$. 
\begin{figure}[thbp]
\includegraphics[width=0.95\textwidth, pagebox=artbox]{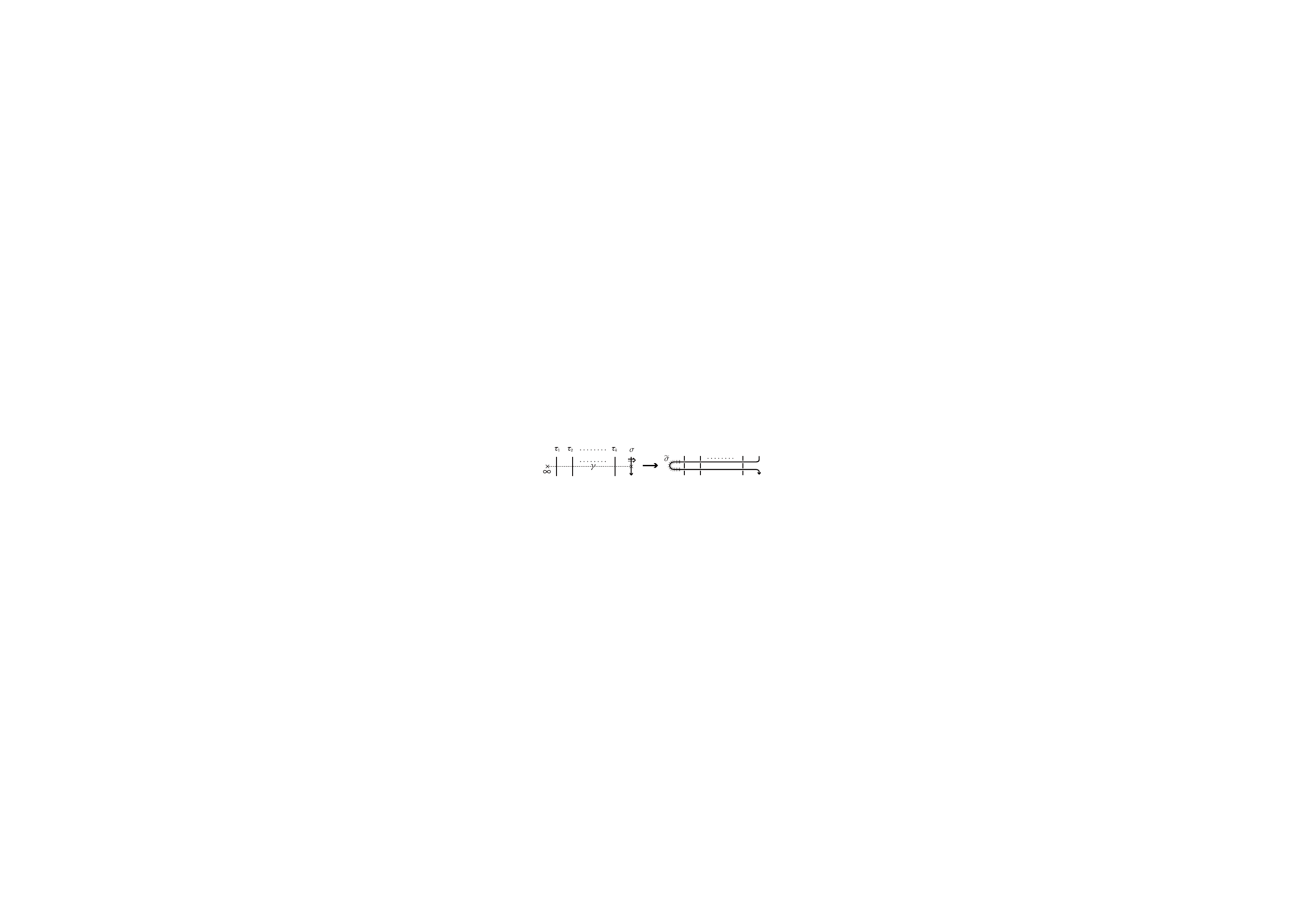}
\caption{Pushing out}\label{fig:push-out}
\end{figure}

\begin{lem}\label{lem:phi1}
The element $a = ( \Phi(\mathcal{C}) ) (\sigma)$ is independent of the choice of $\gamma$.
\end{lem}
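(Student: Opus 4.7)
The plan is to show that if $\gamma$ and $\gamma'$ are two admissible paths from the unbounded region to $\sigma$ (missing crossings, transverse to semi-arcs, with no self-intersections, and arriving at $\sigma$ from its specified region), then the pushed-out colors read off along $\gamma$ and along $\gamma'$ coincide. Since both paths lie in the complement of the finitely many crossings of $D$ and share the same endpoint conditions, they are homotopic in this complement, and a generic such homotopy can be realized by a finite sequence of elementary moves on $\gamma$ together with ambient isotopies that do not affect the readout.

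First I would enumerate the elementary moves. There are two types: (M1) a \emph{tangency move}, in which $\gamma$ acquires or loses a pair of transverse intersections with a single semi-arc $\tau$ (of color $y$) with opposite signs; and (M2) a \emph{crossing move}, in which a subpath of $\gamma$ inside a small disk around a crossing $c$ of $D$ is replaced by another subpath with the same endpoints on the boundary of the disk but routed through a different sequence of the four regions around $c$. Any generic homotopy between two admissible paths in the complement of the crossings decomposes into finitely many moves of these two types, interspersed with isotopies.

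For (M1), the two new intersections contribute successive updates to the color of the pushed-out semi-arc by biquandle operations of the form $\bullet \us y$ and $\bullet \us^{-1} y$, or of the form $\bullet \os y$ and $\bullet \os^{-1} y$, depending on the orientation of $\tau$ and on which direction $\gamma$ crosses it; these operations cancel by (BQ2). For (M2), the required identity between the color obtained via the original routing and via the new routing is an instance of (BQ3), applied via the defining formula $x \s y = (x \us y) \os^{-1} y$ of the derived quandle. Indeed, this is essentially the same calculation used in \cite{ash} to show that $\Qdl{X}$ satisfies the self-distributivity axiom (Q3).

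The main obstacle will be the case analysis for (M2): one must check the invariance for each sign of the crossing $c$ and each pair of routings being exchanged. In each subcase the two pushed-out colors are expressed as iterated applications of $\us, \us^{-1}, \os, \os^{-1}$ to the incident colors, and the equality reduces to one of the three relations in (BQ3) together with (BQ2). Since these axioms were introduced precisely to model the behavior of biquandle colorings under Reidemeister moves, every subcase resolves by a direct algebraic manipulation.
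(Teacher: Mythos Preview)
Your approach is sound but genuinely different from the paper's. The paper argues geometrically: after pushing $\sigma$ out along $\gamma$, it applies an ambient isotopy (realized by Reidemeister moves on the modified diagram) carrying the whole finger to the push-out along $\gamma'$, and observes that the head semi-arc $\tilde\sigma$ stays in the unbounded region throughout and is never involved in any crossing, so its color is preserved by every move in the sequence. This sidesteps all case analysis, using the invariance of biquandle colorings under Reidemeister moves as a black box. Your combinatorial reduction to (M1) and (M2) also works, but two points need sharpening. First, in this construction the finger is pushed \emph{over} each $\tau_i$, so only $\os^{\pm 1}$ appears in the update rule for the head color (the sign is governed by the orientation of $\tau_i$ relative to $\gamma$); $\us$ never enters the push-out itself. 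Second, the identity required for (M2) is not self-distributivity (Q3) of $\Qdl{X}$ but directly the relation $(z\os x)\os(y\os x)=(z\os y)\os(x\us y)$ from (BQ3)---this is exactly what makes the $\os$-action of $\mathrm{As}(X)$ on $X$ well defined, and the $\us$ you see there enters only because one of the four semi-arc colors at the crossing is $x\us y$. With these corrections your case analysis goes through; the paper's route is shorter and conceptual, while yours has the advantage of isolating precisely which biquandle axiom does the work.
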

\begin{proof}
Let $\gamma'$ be another arc from the unbounded region to $\sigma$. We may assume that $\gamma$ and $\gamma'$ are coincident in a neighborhood of the end point ($\in \sigma$). We take an isotopy which takes $\gamma$ to $\gamma'$, fixes the neighborhood, and moves the starting point in the unbounded region. As in the definition of $a \in X (= \Qdl{X})$ above, we push out $\sigma$ along $\gamma$, where the color of the head part is $a$. Then we transform the deformed portion of $\sigma$ by the isotopy and obtain the push-out of $\sigma$ along $\gamma'$. During this isotopy, the head portion of the arc always faces the unbounded region and does not go over or under another arc. Then the color of the resultant semi-arc, which equals to $\Phi(\mathcal{C})(\sigma)$ defined by using $\gamma'$, is $a$, as required.
\end{proof}

\begin{lem}\label{lem:phi2}
The defined map $\Phi(\mathcal{C}): \mathcal{SA}(D) \to \Qdl{X}$ factors through the set $\mathcal{A}(D)$ of the arcs, and the obtained map $\mathcal{A}(D) \to \Qdl{X}$ is a $\Qdl{X}$-coloring.
\end{lem}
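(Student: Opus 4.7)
The plan is to reduce both claims to a local computation at a single crossing, exploiting Lemma \ref{lem:phi1} to choose paths to infinity that agree outside a small disk around the crossing of interest. Before that, I would first isolate the local \emph{push-out rules}: if a semi-arc carrying color $a$ is pushed across a neighboring semi-arc carrying color $b$, the color of the pushed portion changes to one of $a \us b$, $a \us^{\,-1} b$, $a \os b$, or $a \os^{\,-1} b$, depending on whether the finger passes over or under $b$ and on the relative orientation of $b$ with respect to the direction of the push. These four rules are forced by the biquandle coloring condition applied at the newly created crossing, together with the fact that pushing out and then pulling back must return the original color.

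Armed with these rules, for the factoring statement I would fix a crossing with the standard four semi-arc colors $x, y, x \us y, y \os x$ and compare the push-outs of the two over-semi-arcs (colored $y$ and $y \os x$), which lie on the same arc. Choosing paths $\gamma$ and $\gamma'$ that coincide outside a small disk around the crossing and that approach the two specified regions from opposite sides of the under-strand, each push-out reduces via the tabulated local rule to the same element of $X$; the required cancellation is essentially one of the (BQ2) relations $(y \os x) \os^{\,-1} x = y$, followed by possible further reconciliation using the remaining push-out step shared by both paths.

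For the quandle coloring condition, I would likewise choose aligned paths for the three arcs $\alpha_i$, $\alpha_j$, $\alpha_k$ meeting at a crossing, push each out locally to isolate the biquandle contribution at the crossing, and then apply the definition $a \s b := (a \us b) \os^{\,-1} b$. After the local push-out, matching the incoming-under color $x$ and over color $y$ against the outgoing-under color $x \us y$ converts the desired equation $(\Phi(\mathcal{C}))(\alpha_i) \s (\Phi(\mathcal{C}))(\alpha_j) = (\Phi(\mathcal{C}))(\alpha_k)$ directly into the identity defining $\s$ in $\Qdl{X}$.

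The main obstacle will be correctly identifying the four local push-out rules with the right orientation and over/under conventions, and then verifying that the remote portions of the chosen paths cancel in each case. Once the table of rules is pinned down and the \emph{aligned paths} are set up, both the factoring statement and the quandle coloring condition follow by direct case analysis on the sign of the crossing, using (BQ2) for the former and the definition of $\Qdl{X}$ for the latter.
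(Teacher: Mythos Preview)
Your approach is close to the paper's, and for the factoring claim it is essentially correct: once you note that the push-out finger in this construction always goes \emph{over} the arcs it meets (so only the $\os^{\pm 1}$ rules are in play, not $\us^{\pm 1}$), the cancellation $(y \os x)\os^{-1}x = y$ does exactly what is needed, and the shared tail of $\gamma$ and $\gamma'$ then produces identical tip colors for the two over-semi-arcs. The paper does the same thing slightly more geometrically---it pushes out the two semi-arcs in parallel and then pushes out the connecting over-arc between them so the two heads literally merge---but the content is the same.

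For the quandle-coloring claim, however, there is a genuine gap. After your ``local push-out'' you have colors $x$, $y$, and $(x\us y)\os^{-1}y = x*y$ for the three tips, but these are not yet the values $(\Phi(\mathcal C))(\alpha_i)$, $(\Phi(\mathcal C))(\alpha_j)$, $(\Phi(\mathcal C))(\alpha_k)$: you still have to apply the common push-out $P$ along the shared portion of the paths, and the equation you actually need is $P(x*y) = P(x)*P(y)$. This is the statement that each map $\bullet \os^{\pm 1} c$ is a quandle automorphism of $\Qdl{X}$. That is true (and the paper invokes it later, in Section~\ref{sec:alg}), but it is a separate biquandle identity that does not fall out of the definition of $*$ alone; your claim that the reduction is ``direct'' skips precisely this step.

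The paper's proof sidesteps the issue by pushing out not just the three arcs but the \emph{crossing itself}: once the crossing sits in the unbounded region, the biquandle relation there is expressed in the already-final colors $(\Phi(\mathcal C))(\alpha_i)$ and $(\Phi(\mathcal C))(\alpha_j)$, and one further over-crossing of $\sigma_k$'s tip past $\sigma_j$'s tip yields $(\Phi(\mathcal C))(\alpha_k) = (\Phi(\mathcal C))(\alpha_i) * (\Phi(\mathcal C))(\alpha_j)$ straight from the definition of $*$, with no appeal to the automorphism property.
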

\begin{proof}
Let $\sigma$ and $\sigma'$ be adjacent semi-arcs belonging to the same arc. To define $( \Phi(\mathcal{C}) )(\sigma)$ and $( \Phi(\mathcal{C}) )(\sigma')$, we take parallel arcs $\gamma$ and $\gamma'$ from the unbounded region to $\sigma$ and $\sigma'$. Then, the colors of the resultant semi-arcs of the push-outs along $\gamma$ and $\gamma'$ are by definition $( \Phi(\mathcal{C}) )(\sigma)$ and $( \Phi(\mathcal{C}) )(\sigma')$, respectively. Here, we can further push out the remaining part between the two pushed-out portions to connect the two heads directly, as shown in Figure \ref{fig:poa}, and hence we have $( \Phi(\mathcal{C}) )(\sigma) = ( \Phi(\mathcal{C}) )(\sigma')$.

A proof of the second part is similar: Around a crossing point, we push out the two under arcs and the over arc in a parallel way, and then we further push out the remaining part of the under arc as shown in Figure \ref{fig:poc}. We can easily check the quandle coloring condition from the biquandle coloring condition. 
\begin{figure}[thbp]
\includegraphics[width=0.75\textwidth, pagebox=artbox]{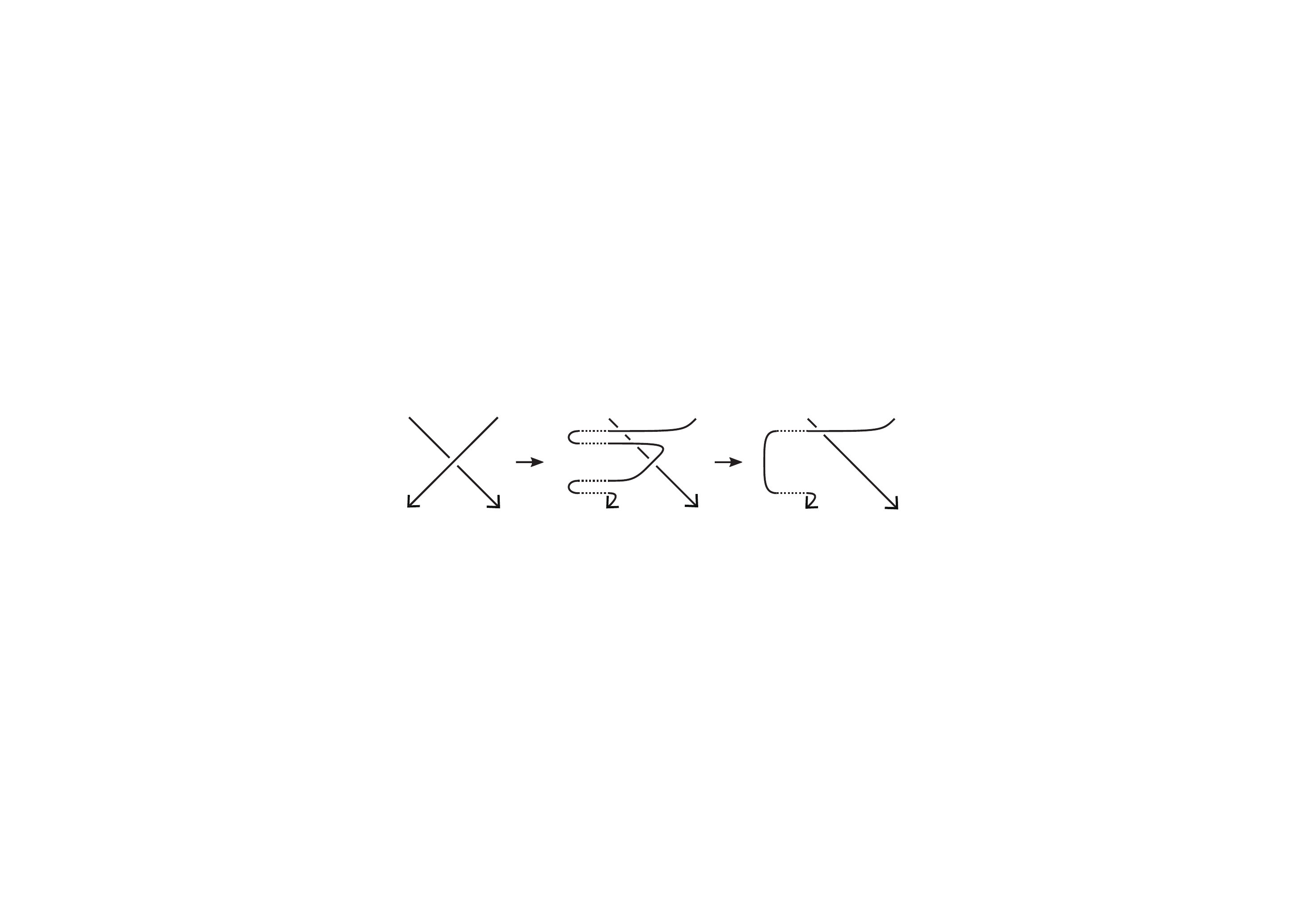}
\caption{Push-out of an over-arc}\label{fig:poa}
\end{figure}
\begin{figure}[thbp]
\begin{minipage}{0.75\textwidth}
\includegraphics[width=\textwidth, pagebox=artbox]{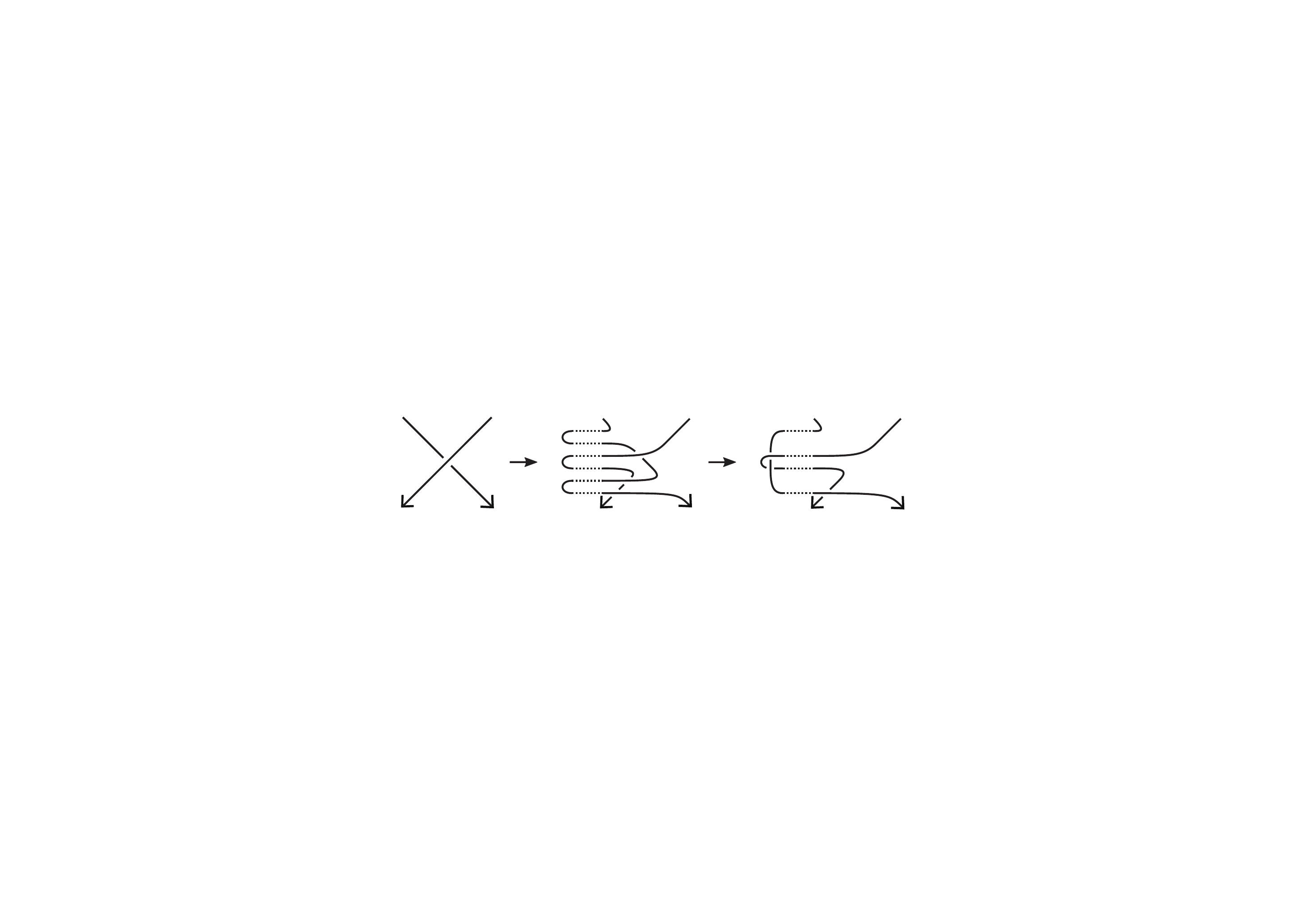}
\end{minipage}
\qquad
\begin{minipage}{0.13\textwidth}
\includegraphics[width=\textwidth, pagebox=artbox]{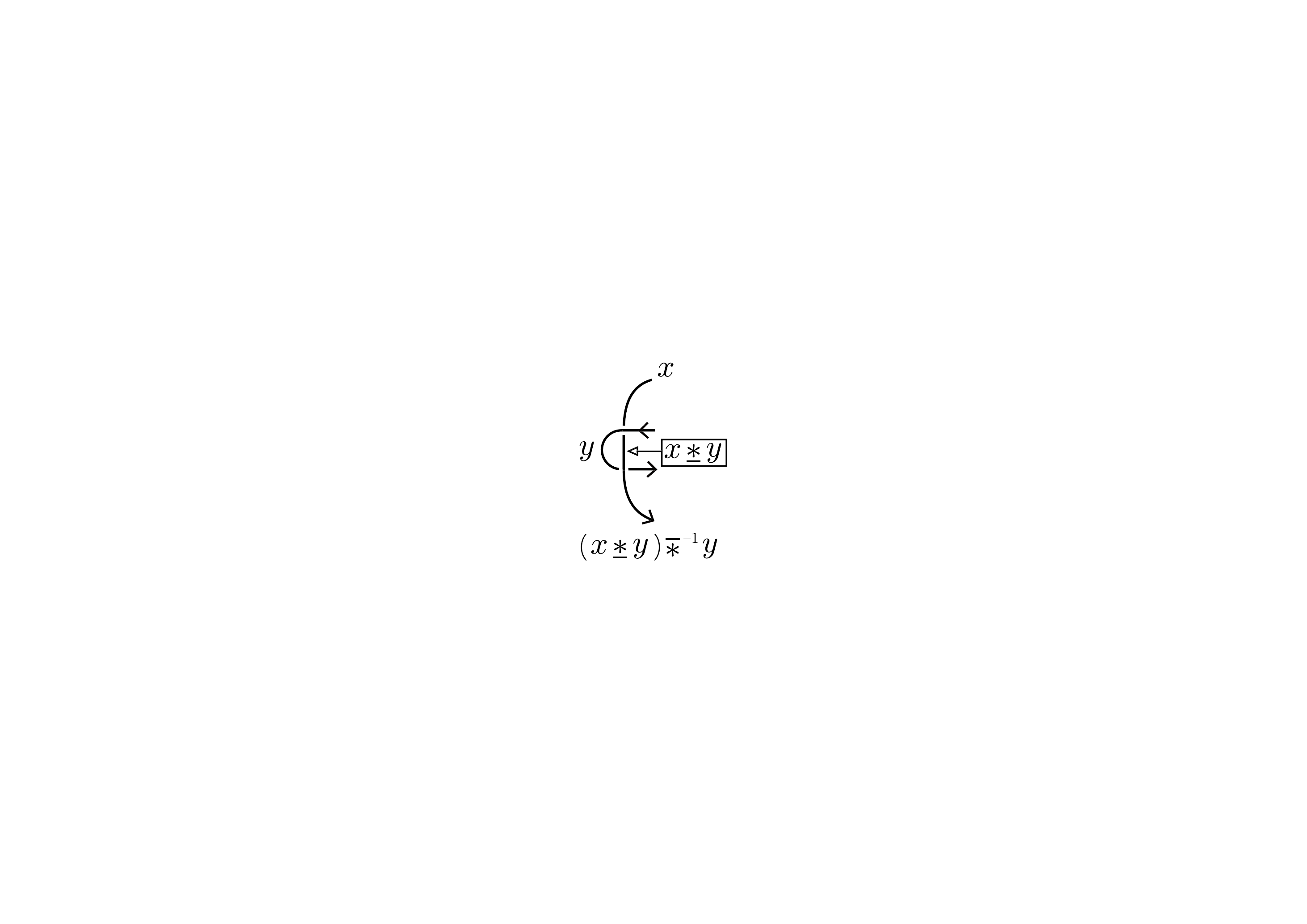}
\end{minipage}
\caption{Push-out of a crossing}\label{fig:poc}
\end{figure}
\end{proof}

\subsection{Proof of Theorem~\ref{thm:main}}\label{subsec:proof}
In order to prove our main theorem (Theorem~\ref{thm:main}), 
all we have to do is to explain that 
the \lq\lq pulling away\rq\rq\ map $\Psi$ is the inverse of 
the \lq\lq pushing out\rq\rq\ map $\Phi$.

Before giving a proof, we fix a sequence of Reidemeister moves in the definition of $\Psi'_2$. First, we slice the diagram $D$ into horizontal strips, some of which are the trivial tangle of vertical strings, so that every semi-arc has a nonempty intersection with at least one of the strips of trivial tangles 
as shown in the upper left of Figure~\ref{fig:phi2}. 
This also slices the doubled diagram $W(D)$ as shown in the lower left of Figure~\ref{fig:phi2}. 
Then we flip and pull away only the trivial-tangle parts of $-D^v$ from those of $W(D)$: 
we move the leftmost string of $-D^v$ to the right space, 
the second string to the left of the first, 
and so on, as shown in the right column of Figure~\ref{fig:phi2}. 
For example, 
for the three trivial-tangle parts of $W(D)$ shown in the lower left of Figure~\ref{fig:phi2}, 
each of the first and third parts changes into the upper right of the figure, 
and the second part changes into the lower right of the figure. 
After this, we deform the remaining parts and obtain the diagram $D \sqcup -D^h$. We should remark that, when $W(D)$ is colored by a biquandle $X$, the color of every semi-arc in the resultant diagram $D \sqcup -D^h$ is determined by the first move of the trivial-tangle parts. In other words, the resultant coloring can be calculated in each strip.
\begin{figure}[htbp]
\begin{tabular}{ccr}
\includegraphics[width=0.35\textwidth, pagebox=artbox]{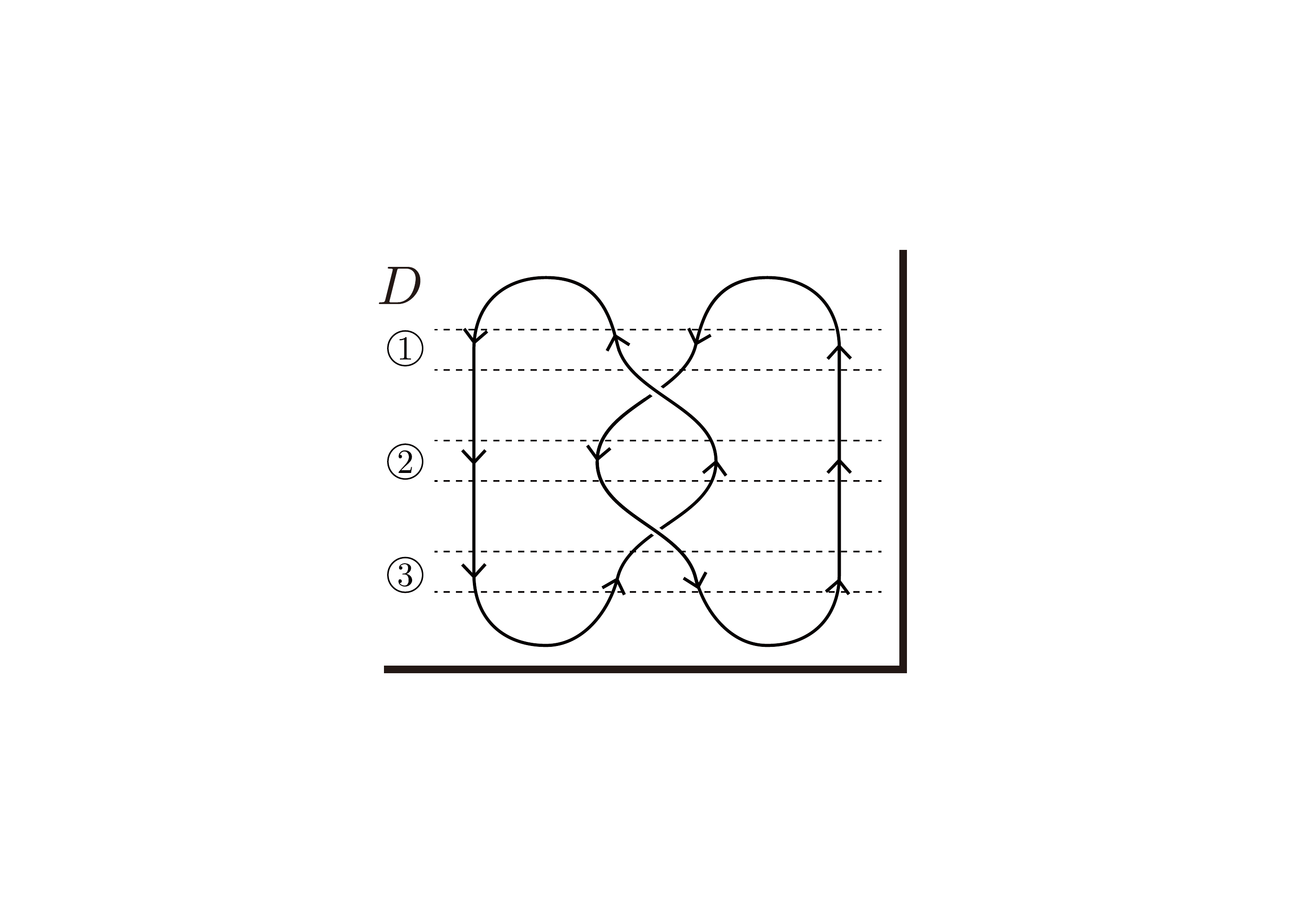}
& &
\includegraphics[width=0.50\textwidth, pagebox=artbox]{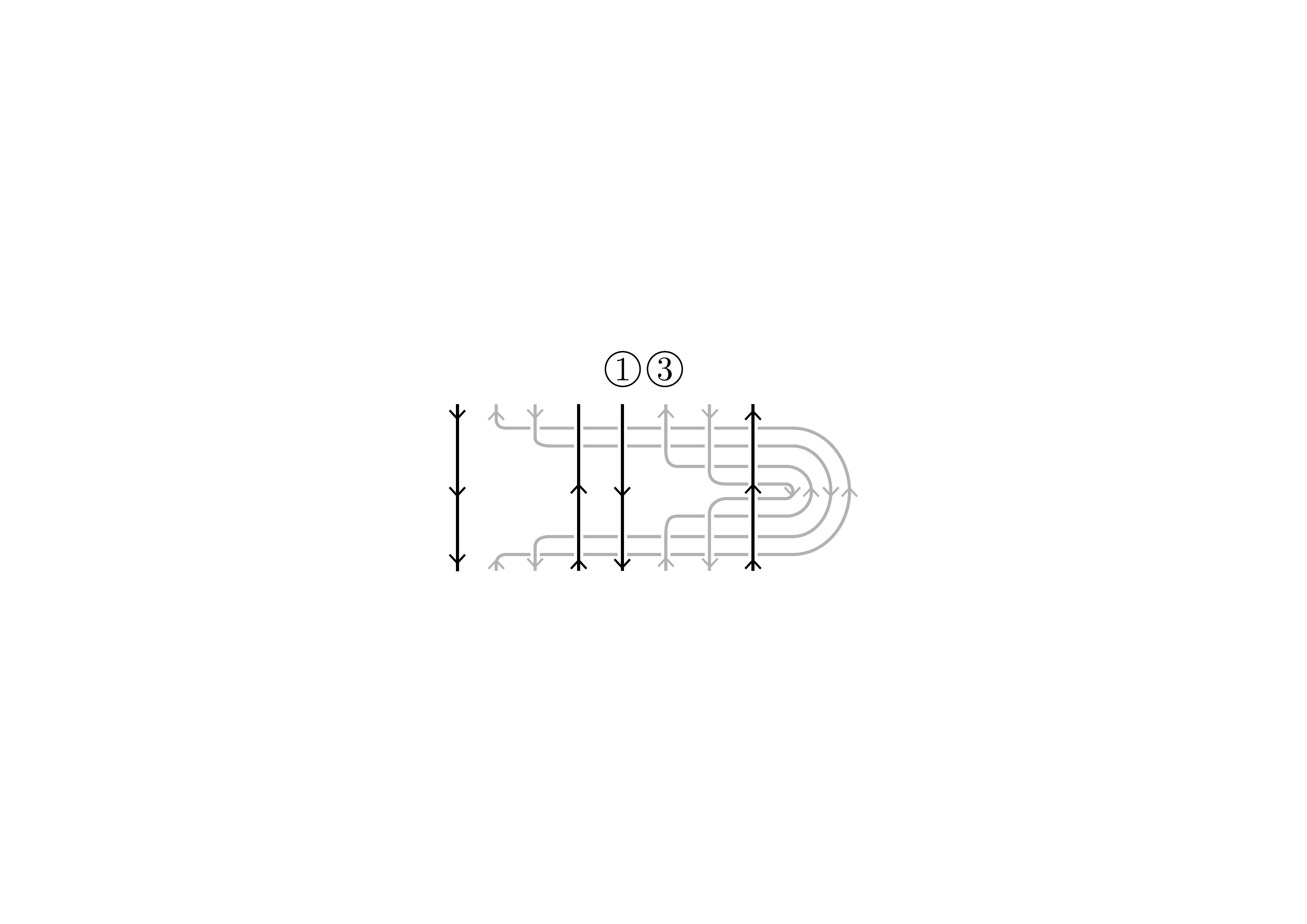}
\\ \\
\includegraphics[width=0.40\textwidth, pagebox=artbox]{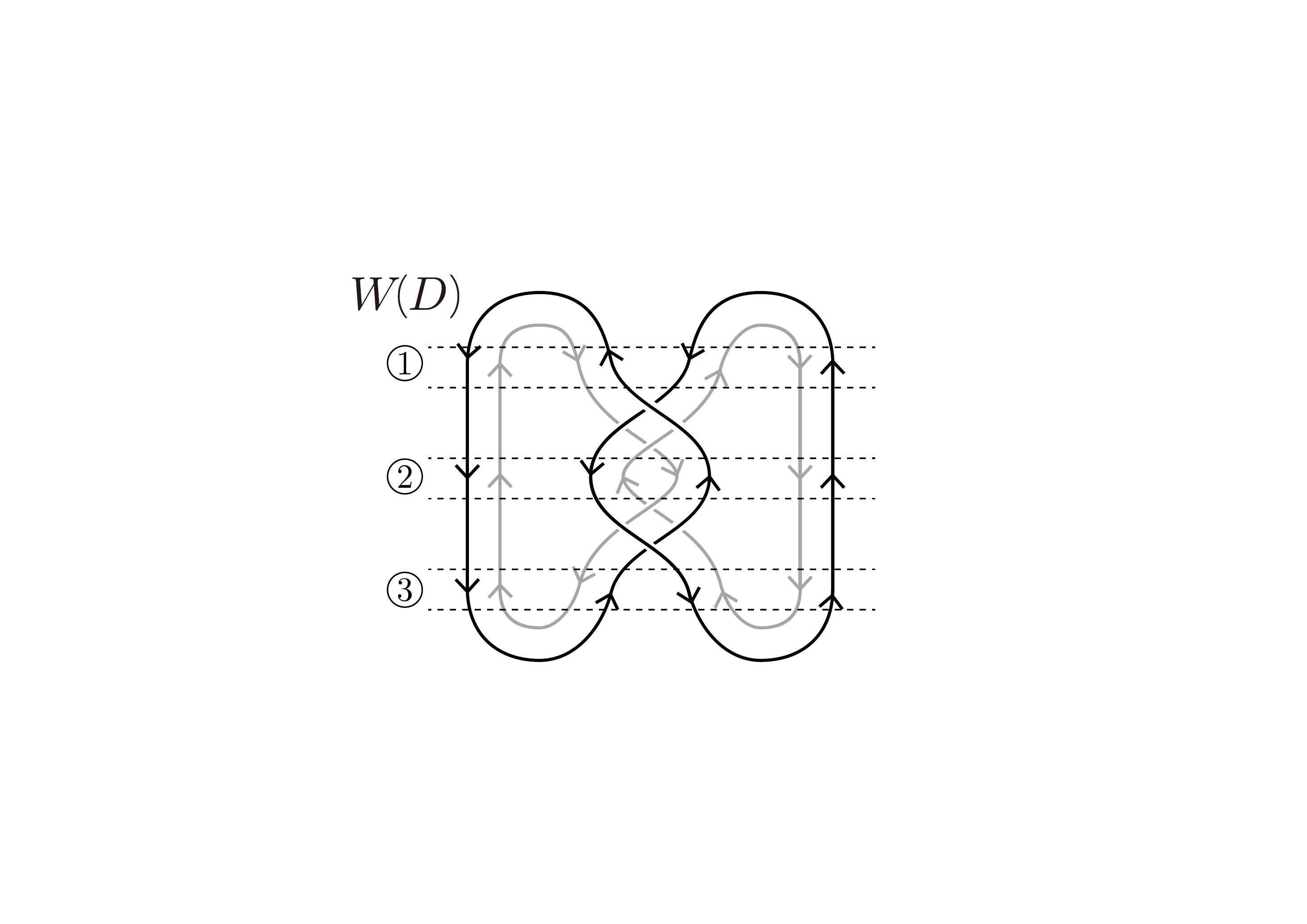}
& &
\includegraphics[width=0.50\textwidth, pagebox=artbox]{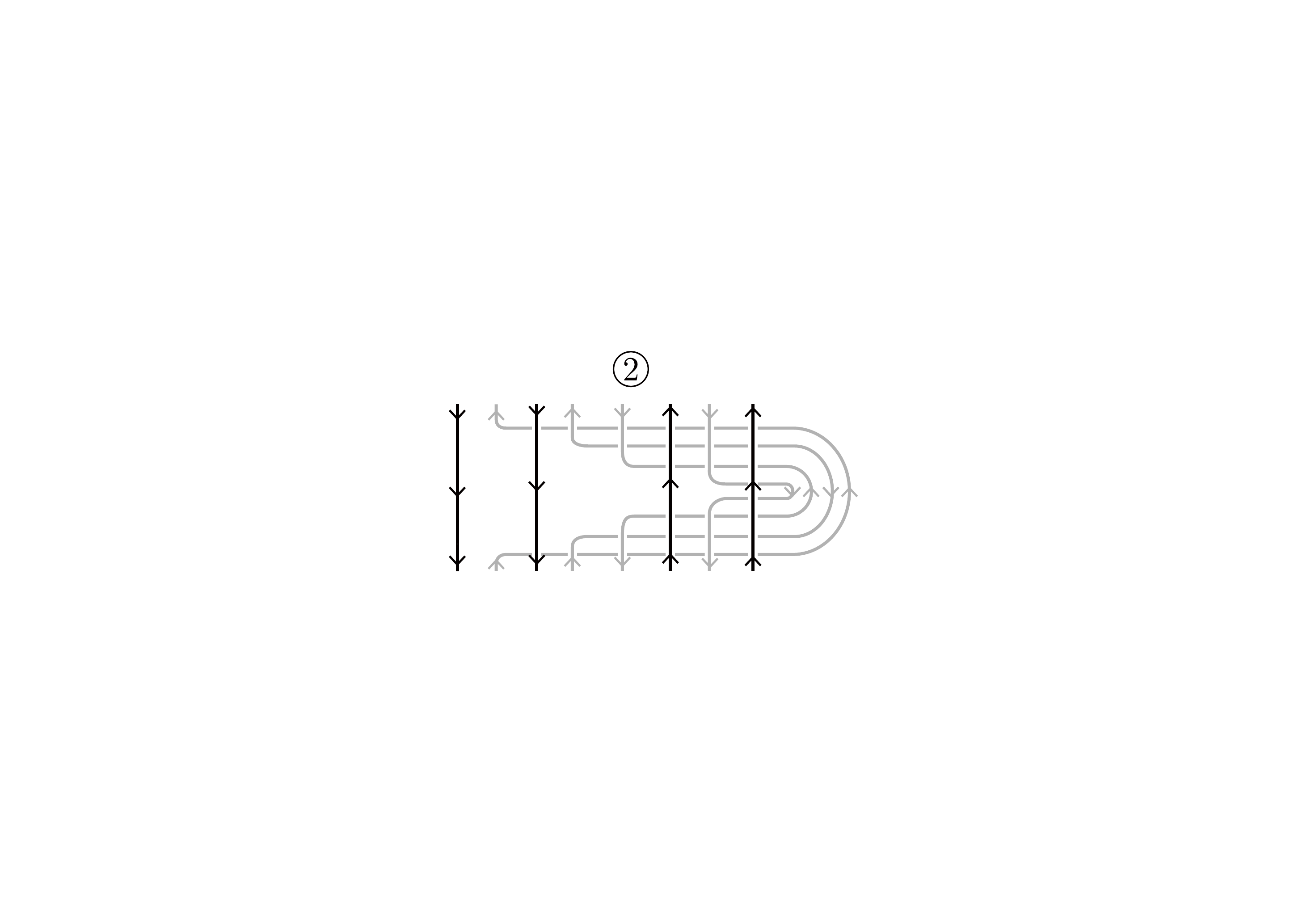}
\end{tabular}
\caption{The map $\Psi'_2$ is \lq\lq pulling away\rq\rq}\label{fig:phi2}
\end{figure}

\begin{proof}[Proof of Theorem \ref{thm:main}]
We first show that $\Phi \circ \Psi$ equals the identity map. Let $\mathcal{C} \in \Col{\Qdl{X}}{D}$ be a $\Qdl{X}$-coloring on $D$. We slice $D$ and $W(D)$ as above and take a horizontal strip of a trivial tangle. We identify each vertical string with the semi-arc $\sigma$ which it belongs to. After we pull away the strings of $-D^v$, $(\Psi(\mathcal{C}))(\sigma) \in X$ is by definition the color of the remaining semi-arc of the parallel strings corresponding to $\sigma$. Then, we push out the semi-arc to the left side. If $\sigma$ is oriented downward, the color of the resultant semi-arc is $(\Phi \circ \Psi(\mathcal{C}))(\sigma)$, as shown in Figure \ref{fig:phipsi}. Otherwise, we add a half twist to the pushed-out part and read off the color; we should note that this is equivalent to ``making a U-turn'' when we push it out. See Figure \ref{fig:U-turn}.  
In order to see that $(\Phi \circ \Psi(\mathcal{C}))(\sigma) = \mathcal{C}(\sigma),$ 
we undo the changes for \lq\lq $-D^v$-part\rq\rq\ as in Figure~\ref{fig:phipsi2}, 
that is, we put the right most \lq\lq $-D^h$-part\rq\rq\  
in the right of Figure~\ref{fig:phipsi} back to the original position. 
Since the two strings of each pair have the same color and opposite orientations, 
going over them does not change the color of the string. 
Thus, we have $(\Phi \circ \Psi(\mathcal{C}))(\sigma) = \mathcal{C}(\sigma),$ 
as asserted. 

To complete the theorem, we prove that $\Phi \colon \Col{X}{D} \to \Col{\Qdl{X}}{D}$ is injective. Let us slice $D$ and consider a horizontal strip of a trivial tangle. We push out all the vertical strings as above (see Figure \ref{fig:hpo}). When we give an $X$-coloring to the original tangle, the resultant $\Qdl{X}$-coloring appears as the colors of the leftmost semi-arcs; this is the definition of $\Phi$. Conversely, any colors given to the leftmost semi-arcs uniquely extend to a whole $X$-coloring of the tangle. This implies that the correspondence $\Phi$ is injective. 
\begin{figure}[htbp]
\centering
\includegraphics[pagebox=artbox]{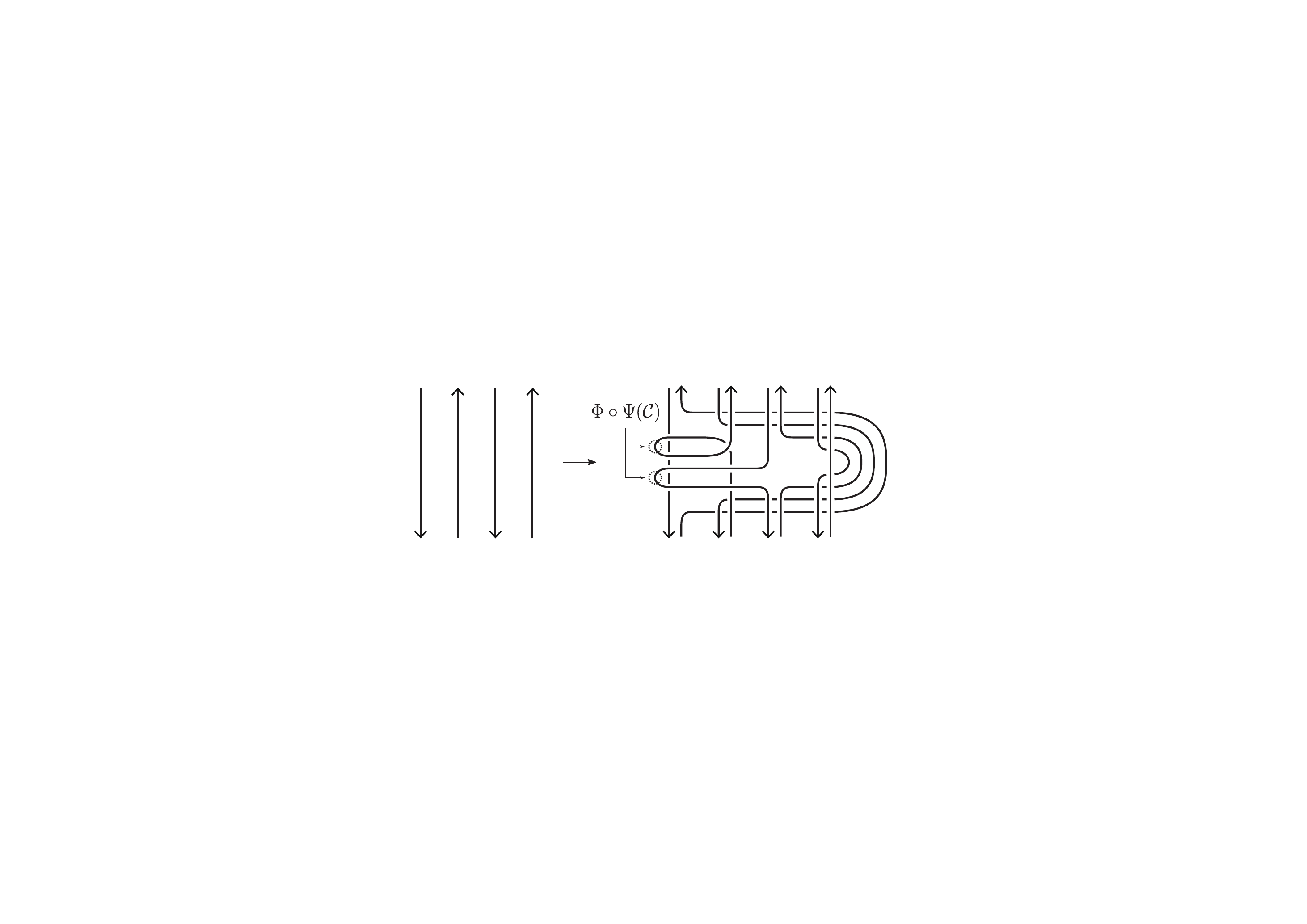}
\caption{$\Phi \circ \Psi$ on a trivial tangle (1)}\label{fig:phipsi}
\end{figure}
\begin{figure}[htbp]
\centering
\includegraphics[pagebox=artbox]{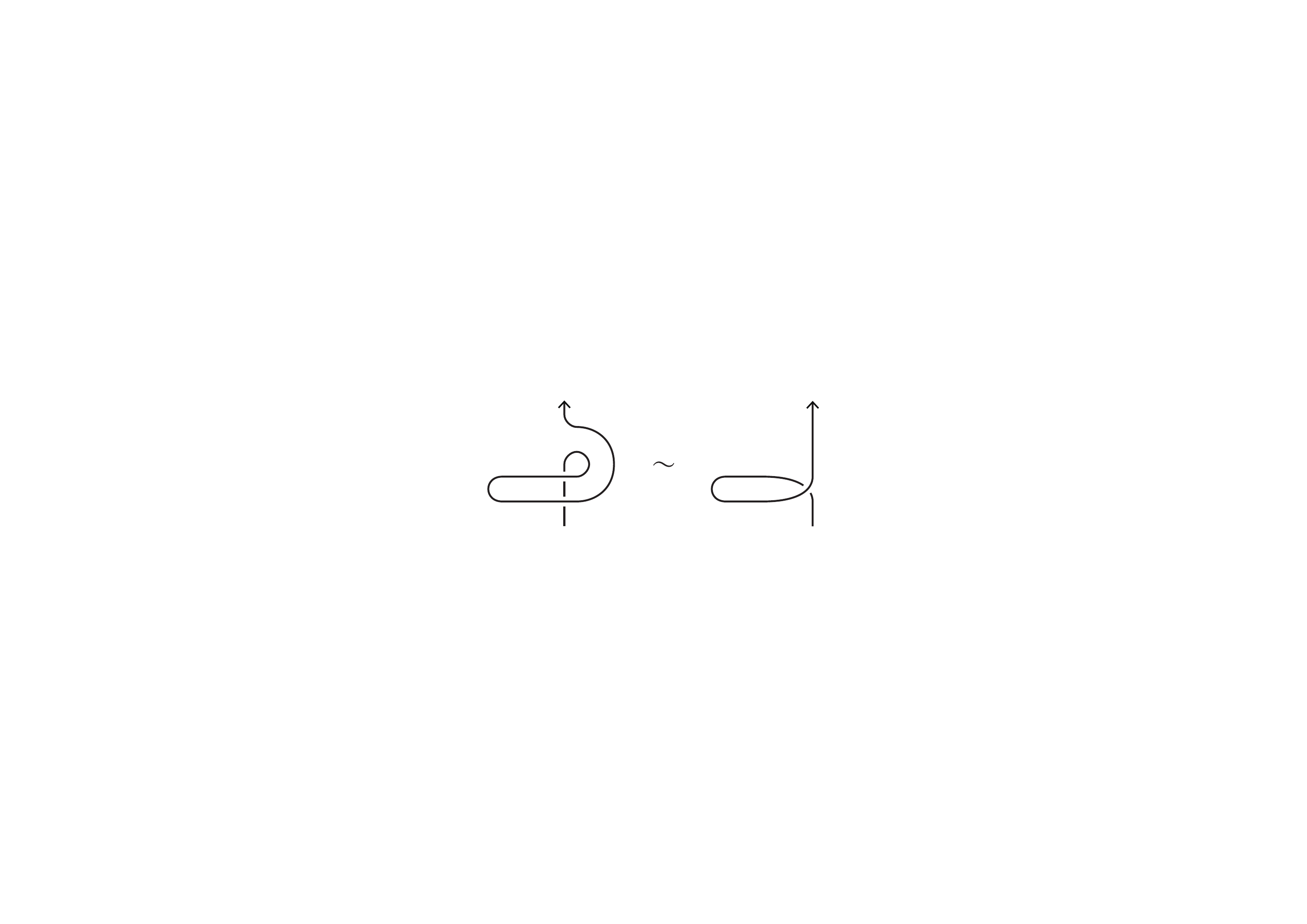}
\caption{A U-turn is equivalent to a half twist}\label{fig:U-turn}
\end{figure}
\begin{figure}[htbp]
\centering
\includegraphics[pagebox=artbox]{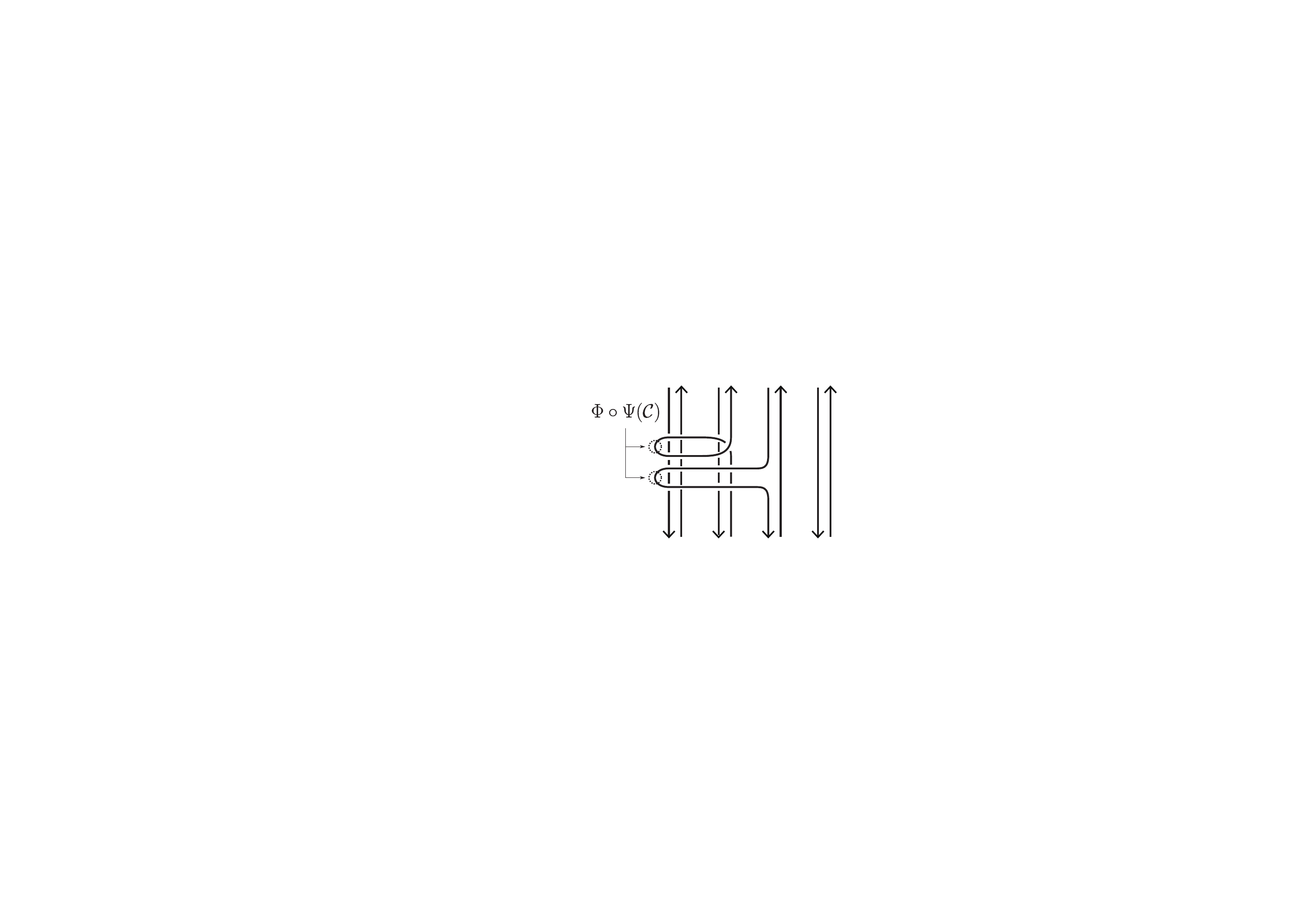}
\caption{$\Phi \circ \Psi$ on a trivial tangle (2)}\label{fig:phipsi2}
\end{figure}
\begin{figure}[htbp]
\centering
\includegraphics[pagebox=artbox]{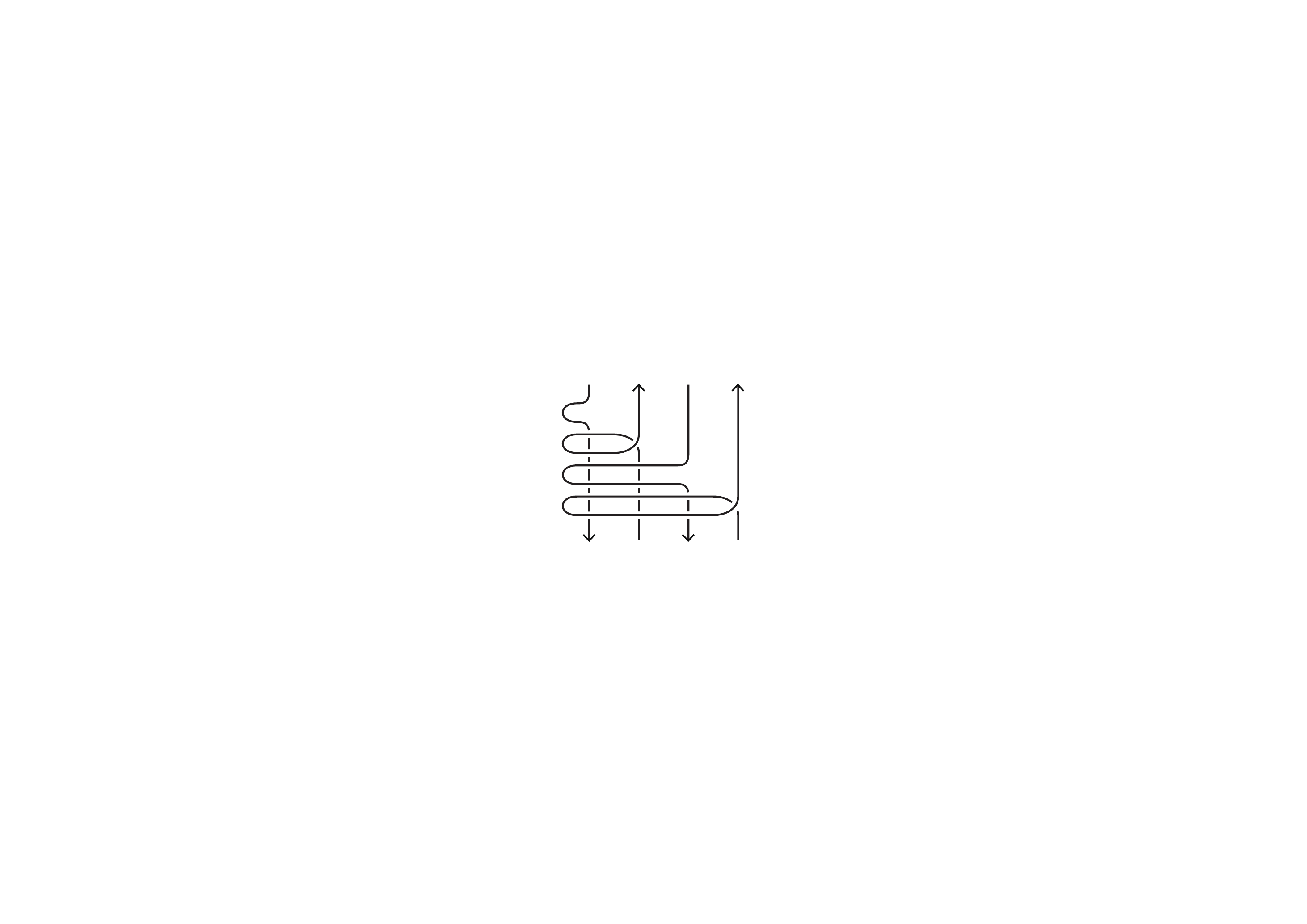}
\caption{The leftmost colors determine the whole coloring}\label{fig:hpo}
\end{figure}
\end{proof}

\par In the above proof, we saw that going over pairs of strings does not change its color in order to show $\Phi \circ \Psi = {\rm id}.$ Similar consideration for a string going under pairs of others identifies the image of $\Psi_2$:

\begin{lem}\label{lem:im-psi2}
The image of $\ColD{X}{W(D)}$ under $\Psi'_2$ is equal to $\ColD{X}{D \sqcup -D^h}$.
\end{lem}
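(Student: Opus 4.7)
The plan is to prove the two inclusions separately: the forward inclusion $\Psi'_2(\ColD{X}{W(D)}) \subseteq \ColD{X}{D \sqcup -D^h}$ by a direct strip-by-strip analysis, and the reverse inclusion by a categorical argument that exploits the bijectivity of $\Psi$ established in Theorem~\ref{thm:main}.

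For the forward inclusion, I will fix $\mathcal{C} \in \ColD{X}{W(D)}$ and use the same horizontal slicing of $D$ and $W(D)$ into trivial-tangle strips as in the proof of Theorem~\ref{thm:main}, so that the pull-away isotopy can be analyzed independently in each strip. Within such a strip the doubled condition on $\mathcal{C}$ says that every pair of parallel strings consists of a $D$-string and a $-D^v$-string carrying the same color and opposite orientations. As each $-D^v$-string travels to its new position and, symmetrically, each $D$-string is crossed by such pairs, the crucial observation, dual to the ``going over a pair'' computation used to prove $\Phi \circ \Psi = \mathrm{id}$, is the following: a strand of color $a$ that passes over or under a balanced pair of color $b$ with opposite orientations preserves its color $a$, because at the first crossing some biquandle operation with $b$ is applied, and at the second crossing, due to the reversed orientation, its inverse is applied, so the effects cancel. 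Applying this observation at every intermediate crossing produced by the pull-away, the resulting coloring $\Psi'_2(\mathcal{C})$ restricts on $\mathcal{SA}(D)$ to $\mathcal{C}|_{\mathcal{SA}(D)}$ and on $\mathcal{SA}(-D^h)$ to $\mathcal{C}|_{\mathcal{SA}(D)} \circ m^{-1} = (\mathcal{C}|_{\mathcal{SA}(D)})^*$, which is precisely the defining form of an element of $\ColD{X}{D \sqcup -D^h}$.

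For the reverse inclusion, I will exploit the factorization $\Psi = \Psi'_3 \circ \Psi'_2 \circ \Psi_1$ together with the forward inclusion just established. The map $\Psi_1$ is a bijection onto $\ColD{X}{W(D)}$ by construction, and the restriction of $\Psi'_3$ to $\ColD{X}{D \sqcup -D^h}$ is a bijection onto $\Col{X}{D}$ with inverse $\mathcal{C} \mapsto \mathcal{C} \sqcup \mathcal{C}^*$. The forward inclusion shows that $\Psi'_2$ sends $\ColD{X}{W(D)}$ into $\ColD{X}{D \sqcup -D^h}$, so $\Psi$ factors through the latter. Since $\Psi$ is a bijection by Theorem~\ref{thm:main} and the two outer factors are bijections, the middle factor $\Psi'_2|_{\ColD{X}{W(D)}} \colon \ColD{X}{W(D)} \to \ColD{X}{D \sqcup -D^h}$ must itself be a bijection; in particular its image equals the full set $\ColD{X}{D \sqcup -D^h}$.

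The main obstacle will be making the balanced-pair observation fully rigorous at every intermediate crossing created by the pull-away isotopy: this requires a case analysis over the four combinations of over/under and orientation, tracking which of the operations $\us^{\pm 1}, \os^{\pm 1}$ is applied and then inverted at the second crossing. This is, however, largely a dual computation to the ``over'' case already carried out in the proof of Theorem~\ref{thm:main}, so no new biquandle axioms beyond the invertibility built into (BQ2) are needed.
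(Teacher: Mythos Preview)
Your reverse-inclusion argument is sound and in fact a bit cleaner than what the paper sketches: once the forward inclusion is established, factoring the bijection $\Psi$ of Theorem~\ref{thm:main} as $\Psi'_3|_{\ColD{X}{D\sqcup -D^h}}\circ(\Psi'_2|_{\ColD{X}{W(D)}})\circ\Psi_1$ forces the middle map to be onto. There is no circularity here, since the proof of Theorem~\ref{thm:main} does not invoke Lemma~\ref{lem:im-psi2}.

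The forward inclusion, however, has a genuine gap. You claim that after the pull-away the resulting coloring ``restricts on $\mathcal{SA}(D)$ to $\mathcal{C}|_{\mathcal{SA}(D)}$'', i.e.\ that the colors on the $D$-part are unchanged. This is false: the computation in Lemma~\ref{lem:psi=psi} gives, for the leftmost $D$-string $\sigma_1$ with $\epsilon_1=-1$, the final color $x_1=k(a_1)$, which differs from the original $a_1$ whenever $k\neq\mathrm{id}$. The mistake is in the phrase ``symmetrically, each $D$-string is crossed by such pairs''. In the fixed sequence of moves the $-D^v$-strings are pulled away \emph{one at a time}, so a $D$-string $\sigma_j$ is crossed by \emph{individual} $-D^v$-strings, not by balanced pairs; the cancellation lemma simply does not apply to it.

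What the paper actually shows is not that the $D$-colors are preserved, but that after the pull-away the $D$-string $\sigma_j$ and its mirror partner in $-D^h$ carry the \emph{same new} color. The ``under a balanced pair'' cancellation is applied to the \emph{moving} $-D^v$-string: once it has (possibly) interacted with its own partner $\sigma_j$, it passes under the still-intact pairs $(\sigma_i,\sigma'_i)$ with unchanged color, so its final color in $-D^h$ equals its color at the moment of separation from $\sigma_j$. One then checks that $\sigma_j$ and $\sigma'_j$ receive identical color changes while they are still paired (each earlier moving string passes under both of them), so that their colors agree at separation time as well. This comparison of the pulled-away arc with the remaining one is what you need to carry out; the current argument asserts a stronger and incorrect statement.
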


\begin{proof}
Compare the pulled-away semi-arcs with the remaining ones, as above.
\end{proof}

\section{Algebraic description of the one-to-one correspondence}\label{sec:alg}
The shadow coloring of a biquandle coloring is recalled in Subsection \ref{subsec:shadow}. Then, we give algebraic descriptions of the correspondences $\Psi$ and $\Phi$ in Subsections \ref{subsec:alg-psi} and \ref{subsec:alg-phi}, respectively. In particular, we prove Proposition \ref{prop:bqR} in Subsection \ref{subsec:alg-psi}.

\subsection{Shadow colorings}\label{subsec:shadow}
The \textit{associated group} $\mathrm{As}(X)$ of a biquandle $X$ is a group defined by 
\[
\langle
x \ (x \in X) \mid x \cdot (y \os x) = y \cdot (x \us y) \ (x,y \in X)
\rangle .
\]
There is a natural right action of $\mathrm{As}(X)$ on $X$ defined by 
\[
x \cdot y := x \us y \quad \text{and} \quad x \cdot y^{-1}:= x \us^{-1} y
\]
for $x,y \in X$, where $y$ is regarded as the element in $\mathrm{As}(X)$.

Let $\mathcal{R}(D)$ denote the set of the regions of an oriented link diagram $D$, 
and $R_\infty \in \mathcal{R}(D)$ the unbounded region of $D$. 
For a biquandle coloring $\mathcal{C} \colon \mathcal{SA}(D) \to X$ in $\Col{X}{D}$, 
the \textit{region coloring} of $D$ with respect to $\col{C}$ is a map 
$\col{C}^R \colon \mathcal{R}(D) \to \mathrm{As}(X)$ such that 
\[ \col{C}^R(R_\infty) = e 
\quad \text{and} \quad 
\col{C}^R(R_1) \cdot \col{C}(\sigma) = \col{C}^R(R_2) \] 
for each semi-arc $\sigma \in \mathcal{SA}(D)$, 
where $R_1$ and $R_2$ are the regions divided by $\sigma$ and $R_1$ is the specified region of $\sigma$, that is, a normal vector of $\sigma$ directs from $R_1$ to $R_2$. 
We note that the region coloring $\col{C}^R$ is uniquely determined from the biquandle coloring $\col{C}$. 
Then, we define a map 
$\col{C}^S \colon \mathcal{SA}(D) \to \mathrm{As}(X) \times X$, 
called the \textit{shadow coloring} of $D$ with respect to $\col{C}$, by 
\[
\col{C}^S(\sigma) := \bigl( \col{C}^R(\rho_\sigma), \col{C}(\sigma) \bigr)
\]
for each semi-arc $\sigma \in \mathcal{SA}(D)$, 
where $\rho_\sigma$ is the specified region of $\sigma$.
We also note that a shadow coloring $\col{C}^S$ is uniquely determined from 
the biquandle coloring $\col{C}$. 

The notions of the associated group, region colorings and shadow colorings 
are defined in the same way for a quandle by regarding it as a biquandle, 
as in Remark \ref{rem:qbq}.

\subsection{Algebraic description of $\Psi$}\label{subsec:alg-psi}
We define a map $\Psi'$ 
from quandle colorings $\Col{\Qdl{X}}{D}$ 
to biquandle colorings $\Col{X}{D}$. 
Let $k \colon X \to X$ be the unique bijection 
satisfying 
\[
k(x) \os k(x) \,  \bigl( =k(x) \us k(x) \bigr) \, = x
\]
for any $x \in X$ (see, e.g., \cite{iikkmo,is}). 
We note that the bijection $k$ essentially appeared in \cite{fjk}.  
Let $\psi \colon \mathrm{As}(\Qdl{X}) \times \Qdl{X} \to X$ 
be a map defined (inductively) by 
\[\begin{array}{rcl}
\psi(e, a) & := &  a, \\
\psi(p \cdot b, a) & := &  \psi(p,a) \os \psi(p,b), \\
\psi(p \cdot b^{-1}, a) & := & \psi(p,a) \os^{\,-1} k(\psi(p,b))
\end{array}\]
for $p \in \mathrm{As}(\Qdl{X})$ 
and $a,b \in \Qdl{X}$, where $b$ is regarded as the element in $\mathrm{As}(\Qdl{X})$. 
Since the proof of well-definedness of the map $\psi$ is similar to that of 
\lq\lq the map $\psi: \mathcal{B}(Q) \to \mathcal{B'}(Q)$ 
defined in \cite[Subsection 3.2]{is}\rq\rq, we omit the details. 
See Lemmas 3.3 and 3.6 in \cite{is}.

For a quandle coloring $\mathcal{C} \colon \mathcal{A}(D) \to \Qdl{X}$ in $\Col{\Qdl{X}}{D}$, 
let
\[
\col{C}^S \colon \mathcal{SA}(D) \to \mathrm{As}(\Qdl{X}) \times \Qdl{X}
\]
be the shadow coloring with respect to $\mathcal{C}$ as in Subsection~\ref{subsec:shadow}.
Then a map $\Psi'(\mathcal{C}) \colon \mathcal{SA}(D) \to X$ 
is defined to be $\Psi'(\mathcal{C}) = \psi \circ \col{C}^S$.

\begin{lem}\label{lem:psi}
The map $\Psi'(\mathcal{C})$ is a biquandle coloring in $\Col{X}{D}$. 
Hence we have a map $\Psi'$ from $\Col{\Qdl{X}}{D}$ to $\Col{X}{D}$. 
\end{lem}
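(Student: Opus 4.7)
The plan is to verify the biquandle coloring conditions pointwise at each crossing of $D$, by expanding $\Psi'(\mathcal{C})$ at each of the four surrounding semi-arcs via the inductive definition of $\psi$ and then reducing the required relations to algebraic identities in $X$ that follow from the biquandle axioms and the definition of $\Qdl{X}$.

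I would first fix a crossing and denote the quandle colors of its three arcs by $a, b, c \in \Qdl{X}$, related by $c = a \s b = (a \us b) \os^{\,-1} b$. The four region colors around the crossing differ by right-multiplication in $\mathrm{As}(\Qdl{X})$ by $a, b, c$, so each of the four shadow colorings $\col{C}^S(\sigma_r)$ can be written in terms of a single base region color $p$. Applying the inductive formula $\psi(p \cdot b, a) = \psi(p, a) \os \psi(p, b)$ to each of these shadow colorings, the four values $\Psi'(\mathcal{C})(\sigma_r)$ are expressed in terms of $\psi(p, a), \psi(p, b), \psi(p, c)$. Direct substitution shows that one of the two biquandle relations at the crossing is built into the inductive formula itself, while the other reduces to the identity
\[
\psi(p, c) = \bigl( \psi(p, a) \us \psi(p, b) \bigr) \os^{\,-1} \psi(p, b);
\]
in other words, each $\psi(p, \cdot) \colon \Qdl{X} \to X$ intertwines the quandle operation $\s$ on $\Qdl{X}$ with the operation $(x, y) \mapsto (x \us y) \os^{\,-1} y$ on $X$.

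This intertwining property is itself a consequence of the well-definedness of $\psi$, specifically its consistency across the associated group relation $x \cdot (y \os x) = y \cdot (x \us y)$ of $\mathrm{As}(X)$, whose verification via biquandle axiom (BQ3) is essentially that of \cite[Lemmas~3.3 and~3.6]{is}. For negative crossings, the analogous calculation uses the companion formula $\psi(p \cdot b^{-1}, a) = \psi(p, a) \os^{\,-1} k(\psi(p, b))$ together with the defining property $k(x) \os k(x) = x$ of the bijection $k$. The main obstacle is the careful bookkeeping of orientation conventions and specified regions at each crossing, together with the algebraic manipulations using (BQ2), (BQ3), and the definition of $\s$ on $\Qdl{X}$ needed to establish the intertwining property cleanly from the inductive construction of $\psi$.
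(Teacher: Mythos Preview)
Your proposal is correct and follows essentially the same approach as the paper. The paper packages the key identity as $\psi(p,a) \us \psi(p,b) = \psi(p \cdot b, a \s b)$ (its equation~(\ref{eq:us})), which is equivalent to your intertwining property $\psi(p, a \s b) = (\psi(p,a) \us \psi(p,b)) \os^{\,-1} \psi(p,b)$ via the defining formula $\psi(p \cdot b, c) = \psi(p,c) \os \psi(p,b)$; in both versions one biquandle relation is the definition of $\psi$ and the other is this identity, whose verification is deferred to the analogue of \cite[Lemma~3.4]{is}.
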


\begin{proof}
We can check that the map $\psi$ satisfies 
\begin{equation}\label{eq:us}
\psi(p,a) \us \psi(p,b) = \psi(p \cdot b, a \s b)
\end{equation}
for $p \in \mathrm{As}(\Qdl{X})$ and $a, b \in \Qdl{X}$, 
where $b$ is regarded as the element in $\mathrm{As}(\Qdl{X})$. 
Since the proof is similar to that of \cite[Lemma 3.4]{is}, we omit the details. 
We find $\Psi'(\mathcal{C})$ to be a biquandle coloring in $\Col{X}{D}$ from 
the equation (\ref{eq:us}) and the definition of $\psi$.
\end{proof}

\begin{lem}\label{lem:psi=psi}
The correspondence $\Psi'$ is equal to $\Psi$.
\end{lem}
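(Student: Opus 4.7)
The strategy is to prove $\Phi\circ\Psi'=\mathrm{id}_{\Col{\Qdl{X}}{D}}$. Together with Theorem~\ref{thm:main}---which provides both $\Phi\circ\Psi=\mathrm{id}$ and the injectivity of $\Phi$, and hence the bijectivity of $\Phi$ with $\Psi$ as its unique inverse---this immediately forces $\Psi'=\Psi$.

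Fix $\mathcal{C}\in\Col{\Qdl{X}}{D}$ and a semi-arc $\sigma$, and set $(p,a):=\col{C}^S(\sigma)$, so that $\Psi'(\mathcal{C})(\sigma)=\psi(p,a)$. By Lemma~\ref{lem:phi1}, in computing $\Phi(\Psi'(\mathcal{C}))(\sigma)$ I may take the push-out arc $\gamma$ from the unbounded region to $\rho_\sigma$ to realize a prescribed word presentation $p=b_1^{\epsilon_1}\cdots b_n^{\epsilon_n}$ in $\mathrm{As}(\Qdl{X})$: the successive regions $R_0=R_\infty,R_1,\ldots,R_n=\rho_\sigma$ traversed by $\gamma$ have region colors $e,b_1^{\epsilon_1},\ldots,p$, and the $j$-th crossed strand has quandle color $b_j$ with the sign convention prescribed by $\epsilon_j$.

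I then claim, by downward induction on $j\in\{0,1,\ldots,n\}$, that the pushed-out portion of $\sigma$ lying in region $R_j$ carries biquandle color $\psi(b_1^{\epsilon_1}\cdots b_j^{\epsilon_j},a)$. The case $j=n$ is the initial color $\Psi'(\mathcal{C})(\sigma)=\psi(p,a)$; the case $j=0$ yields $\Phi(\Psi'(\mathcal{C}))(\sigma)=\psi(e,a)=a=\mathcal{C}(\sigma)$, which is the required identity. The inductive step---crossing the strand $\tau$ underlying $b_{j+1}^{\epsilon_{j+1}}$---hinges on the observation that, writing $q=b_1^{\epsilon_1}\cdots b_j^{\epsilon_j}$, the biquandle color $\Psi'(\mathcal{C})(\tau)$ equals $\psi(q,b_{j+1})$ when $\epsilon_{j+1}=+1$ (the specified region of $\tau$ is $R_j$) and equals $k(\psi(q,b_{j+1}))$ when $\epsilon_{j+1}=-1$ (the specified region of $\tau$ is $R_{j+1}$, together with the identity $\psi(q\cdot b_{j+1}^{-1},b_{j+1})=k(\psi(q,b_{j+1}))$ that follows from the recursion of $\psi$ and the relation $c\os^{-1}k(c)=k(c)$ derivable from (BQ1)). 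The biquandle relation at the push-out crossing then reduces $\psi(q\cdot b_{j+1}^{\epsilon_{j+1}},a)$ to $\psi(q,a)$ via the appropriate recursive clause of $\psi$.

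The main obstacle is the case analysis at this last step: four subcases arise from the sign $\epsilon_{j+1}=\pm 1$ combined with whether $\sigma$ is pushed over or under $\tau$, and in each one must match the biquandle axiom governing the crossing against the corresponding clause $\psi(q\cdot b,a)=\psi(q,a)\os\psi(q,b)$ or $\psi(q\cdot b^{-1},a)=\psi(q,a)\os^{-1}k(\psi(q,b))$, with $k$ entering exactly to compensate for orientation reversal. This verification is parallel to the derivation of equation~\eqref{eq:us} in Lemma~\ref{lem:psi} and to the analogous computation in \cite[Section~3.2]{is}, and is essentially routine once the correspondence between geometric and algebraic updates has been set up.
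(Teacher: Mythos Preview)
Your argument is correct, but it follows a genuinely different route from the paper's own proof.

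The paper proves $\Psi'=\Psi$ \emph{directly}: it slices $D$ into horizontal strips containing trivial tangles, and on such a strip computes both $(\Psi(\mathcal{C}))(\sigma_i)$ (by tracing through the explicit pull-away of $-D^v$ fixed in Subsection~\ref{subsec:proof}) and $(\Psi'(\mathcal{C}))(\sigma_i)=\psi(p_i,a_i)$, then shows these agree by an induction on $i$ using the recursion defining $\psi$. In other words, the paper never touches $\Phi$ here; it unwinds the geometric definition of $\Psi$ and matches it term by term against the algebraic one.

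Your approach instead proves $\Phi\circ\Psi'=\mathrm{id}$ via an inductive push-out computation and then invokes the bijectivity of $\Phi$ established in the proof of Theorem~\ref{thm:main}. This is logically sound—no circularity arises, since the proof of Theorem~\ref{thm:main} in Subsection~\ref{subsec:proof} is carried out for a \emph{fixed} sequence of Reidemeister moves and does not rely on Lemma~\ref{lem:psi=psi}. Your inductive step is essentially the same calculation that appears later in the paper's proof that $\Phi'=\Phi$ (end of Subsection~\ref{subsec:alg-phi}), combined with the recursion for $\psi$; the identity $\psi(q\cdot b^{-1},b)=k(\psi(q,b))$ you derive is exactly right. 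One small remark: your ``four subcases'' from the over/under choice of the push-out are not really needed—the head colour changes only via $\os^{\pm1}$ regardless (this is precisely what makes $\Phi$ well defined), so two subcases on the sign $\epsilon_{j+1}$ suffice.

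What each approach buys: the paper's direct comparison makes the later logical flow cleaner, since Lemma~\ref{lem:compatibility}, Proposition~\ref{prop:bqR}, and Lemma~\ref{lem:well-def} all build on Lemma~\ref{lem:psi=psi}, and having it proved without reference to Theorem~\ref{thm:main} keeps Section~\ref{sec:alg} self-contained. Your route is arguably more economical, reusing Theorem~\ref{thm:main} rather than redoing a parallel pull-away computation, at the cost of making Section~\ref{sec:alg} depend on Section~\ref{sec:corr}.
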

\begin{proof}
As we saw in Subsection \ref{subsec:proof}, it is sufficient to check the equality in the case of a trivial tangle $T$ of vertical strings. That is, let $\sigma_i$ be the $i$-th string from the left, and we give a color $a_i = \mathcal{C}(\sigma_i) \in \Qdl{X}$ to $\sigma_i$. We put $\epsilon_i = 1$ (resp.~$-1$) if it is oriented downward (resp.~upward). We consider the double $W(T)$ of $T$ and give the paralleled coloring. After the pull-away (and the flip) of $-T^v$ as in Subsection \ref{subsec:proof}, the color of the $i$-th remaining semi-arc is by definition $(\Psi(\mathcal{C}))(\sigma_i)$, denoted by $x_i$. On the other hand, we define $\col{C}^S$ as above, regarding the leftmost space as the unbounded region, and $\Psi'(\mathcal{C})$ to be $\psi \circ \col{C}^S$. We denote $(\Psi'(\mathcal{C}))(\sigma_i)$ by $x'_i$. We shall show $x_i = x'_i$ inductively, as follows.

If $\epsilon_1 = 1$, $\sigma_1$ does not intersect with any other arc during the pull-away. Then we have $x_1 = a_1$. In this case, the color of the specified region of $\sigma_1$ is $e \in \mathrm{As}(\Qdl{X})$, and hence $x'_1 = \psi(e,a_1) = a_1$, which shows $x_1=x'_1.$ If $\epsilon_1 = -1$, the copy of $\sigma_1$ with the reversed orientation goes under $\sigma_1$, and hence we find $x_1 = k(a_1)$. In this case, the color of the specified region of $\sigma_1$ is $a_1^{-1} \in \mathrm{As}(\Qdl{X})$, and then $x'_1 = \psi(a_1^{-1},a_1) = a_1 \os^{\, -1} k(a_1) = k(a_1).$ Thus, we have $x_1 = x'_1$.

Assume that $x_i = x'_i$ holds for $i = 1, \dots, j-1$, and then let us show $x_j = x'_j.$ 
We consider the case where $\epsilon_j=1$; the other case, where $\epsilon_j=-1$, can be shown in a similar way. 
Here, the left $j-1$ arcs of $-T^v$ go under $\sigma_j$ and change the color of $\sigma_j$. The $i$-th pulled-away arc ($i<j$) possibly has an intersection with $\sigma_i$ (if $\epsilon_i = -1$) first, but thereafter it goes under the pairs of arcs, which does not change its color, as we saw in the proof of Lemma \ref{lem:im-psi2}. Then, we have $x_j = (((a_j \os^{\, \epsilon_1} x_1) \os^{\, \epsilon_2} x_2) \cdots ) \os^{\, \epsilon_{j-1}} x_{j-1}.$ On the other hand, we easily find the color, denoted by $p_i \in \mathrm{As}(\Qdl{X})$, of the specified region of $\sigma_i$ in $T$ to be
\[ p_i = \left\{ \begin{array}{ll} a_1^{\epsilon_1}\cdots a_{i-1}^{\epsilon_{i-1}} & \text{if $\epsilon_i = 1$},\\
a_1^{\epsilon_1}\cdots a_i^{\epsilon_i} & \text{if $\epsilon_i = -1$}, \end{array} \right.\]
and then have $x'_i = \psi(p_i,a_i)$. By these formulas and the definition of $\psi$, we have
\begin{align*}
x_j 
&= (((a_j \os^{\,\epsilon_1} x_1) \os^{\,\epsilon_2} x_2) \cdots ) \os^{\,\epsilon_{j-1}} x_{j-1}\\
&= (((a_j \os^{\,\epsilon_1} x'_1) \os^{\,\epsilon_2} x'_2) \cdots ) \os^{\,\epsilon_{j-1}} x'_{j-1}\\
&= (((\psi(e,a_j) \os^{\,\epsilon_1} \psi(p_1,a_1)) \os^{\,\epsilon_2} \psi(p_2,a_2)) \cdots ) \os^{\,\epsilon_{j-1}} \psi(p_{j-1},a_{j-1})\\
&= ((\psi(a_1^{\epsilon_1},a_j) \os^{\,\epsilon_2} \psi(p_2,a_2)) \cdots ) \os^{\,\epsilon_{j-1}} \psi(p_{j-1},a_{j-1})\\
&= ((\psi(a_1^{\epsilon_1}a_2^{\epsilon_2},a_j) \os^{\,\epsilon_3} \psi(p_3,a_3)) \cdots ) \os^{\,\epsilon_{j-1}} \psi(p_{j-1},a_{j-1})\\
&= \cdots\\
&= \psi(a_1^{\epsilon_1} \cdots a_{j-1}^{\epsilon_{j-1}},a_j)\\
&= x'_j
\end{align*}
as required, since we assume $x_i = x'_i$ for $i < j$. 
\end{proof}

\begin{lem}\label{lem:compatibility}
The map $\Psi$ is compatible with Reidemeister moves. That is, given a sequence of Reidemeister moves which takes $D$ to $D'$, the following diagram commutes\rm{:}
\[ \begin{CD}
\Col{\Qdl{X}}{D} @>>> \Col{\Qdl{X}}{D'} \\
@V\Psi VV @VV\Psi V \\
\Col{X}{D} @>>> \Col{X}{D}
\end{CD} \]
\end{lem}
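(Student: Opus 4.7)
The plan is to replace $\Psi$ by its algebraic description $\Psi' = \psi \circ (\,\cdot\,)^S$ from Lemma \ref{lem:psi=psi} and then to verify compatibility one Reidemeister move at a time, exploiting that both $\psi$ and the shadow coloring are locally determined. Decomposing the given sequence of moves into individual Reidemeister moves, it suffices to treat a single move contained in a small disk $U \subset \R^2$ taking $D$ to $D'$. Let $\mathcal{C} \in \Col{\Qdl{X}}{D}$ correspond to $\mathcal{C}' \in \Col{\Qdl{X}}{D'}$ under the standard bijection induced by the move, and let $\mathcal{D} \in \Col{X}{D'}$ be obtained from $\Psi(\mathcal{C})$ by transporting through the biquandle Reidemeister-move bijection. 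The goal is to show $\mathcal{D} = \Psi(\mathcal{C}')$.

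Outside $U$, the diagrams $D$ and $D'$ coincide, giving canonical identifications between their arcs, semi-arcs, and regions. Because $\mathcal{C}$ and $\mathcal{C}'$ agree outside $U$ and share the normalisation $\mathcal{C}^R(R_\infty) = e = (\mathcal{C}')^R(R_\infty)$, the region colorings also agree on every region meeting the exterior of $U$ (one can propagate from $R_\infty$ without entering $U$). Hence $\mathcal{C}^S(\sigma) = (\mathcal{C}')^S(\sigma)$ for every semi-arc $\sigma$ outside $U$, and applying $\psi$ together with Lemma \ref{lem:psi=psi} yields $\Psi(\mathcal{C})(\sigma) = \Psi(\mathcal{C}')(\sigma)$ there. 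On the other hand, the biquandle Reidemeister-move bijection by construction fixes the colors of semi-arcs outside $U$, so $\mathcal{D}$ and $\Psi(\mathcal{C}')$ are two biquandle colorings of $D'$ agreeing outside $U$.

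To conclude, one invokes the fact that a biquandle coloring of the local tangle involved in any single Reidemeister move is uniquely determined by its values on the boundary semi-arcs of $U$: for type I, II, III moves the colors of the interior semi-arcs are forced by (BQ1), (BQ2), (BQ3) respectively, which is precisely the content of the well-definedness of the biquandle Reidemeister-move bijection in \cite{fjk}. Therefore $\mathcal{D} = \Psi(\mathcal{C}')$, and the square commutes. I expect the main obstacle to be purely bookkeeping: keeping the two interpretations of the biquandle coloring on $D'$ (the transported one and the shadow-theoretic one) clearly separated, so that the chain of equalities outside $U$ combined with the local uniqueness inside $U$ gives genuine content rather than a circular identification.
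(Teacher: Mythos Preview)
Your proof is correct and follows essentially the same approach as the paper: both reduce via Lemma~\ref{lem:psi=psi} to the algebraic description $\Psi' = \psi \circ (\,\cdot\,)^S$, after which compatibility becomes immediate because $\psi$ is a fixed local map and shadow colorings transform naturally under Reidemeister moves. The paper compresses everything after that reduction into the word ``trivial'', whereas you spell out the locality argument (agreement outside $U$ plus boundary-determines-interior inside $U$); this is a faithful unpacking rather than a different route.
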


\begin{proof}
According to Lemma \ref{lem:psi=psi}, the lemma is equivalent to the corresponding claim for $\Psi'$, but this is trivial. 
\end{proof}

\begin{proof}[Proof of Proposition \ref{prop:bqR}]
Let $f_1^Q,f_2^Q: \Col{\Qdl{X}}{D} \to \Col{\Qdl{X}}{D'}$ be the two induced bijections on the quandle colorings. As we recalled in 
Subsection~\ref{subsec:same_R-move}, 
the proposition holds for quandles and hence $f_1^Q = f_2^Q$. By Lemma \ref{lem:compatibility}, we find that
\[ f_1^B = \Psi \circ f_1^Q \circ \Psi^{-1} = \Psi \circ f_2^Q \circ \Psi^{-1} = f_2^B \]
as required.
\end{proof}

Thus, $\Psi_2$ in the definition of $\Psi$ is well defined:

\begin{lem}\label{lem:well-def}
The bijection $\Psi'_2: \Col{X}{W(D)} \to \Col{X}{D \sqcup -D^h}$ is independent of the choice of Reidemeister moves representing the flip and the pull-away.
\end{lem}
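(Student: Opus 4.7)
The plan is to deduce this lemma directly from Proposition \ref{prop:bqR}. Let $s_1$ and $s_2$ be two sequences of Reidemeister moves that both take $W(D)$ to $D \sqcup -D^h$ and both realize the prescribed flip-and-pull-away: each is obtained by a generic perturbation of the ambient isotopy of $\mathbb{R}^3$ described in Subsection \ref{subsec:qtobq}, namely the rigid half-rotation combined with the horizontal translation of the ball in $\mathbb{R}^3_-$ containing $-L^v$, while the ball in $\mathbb{R}^3_+$ containing $L$ is held fixed. The two induced bijections are the two candidates $\Psi'_{2,1}, \Psi'_{2,2} \colon \Col{X}{W(D)} \to \Col{X}{D \sqcup -D^h}$, and we want to show they agree.

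First I would verify that $s_1$ and $s_2$ are ``topologically same'' in the sense of Subsection \ref{subsec:same_R-move}. Both sequences are generic perturbations of a single underlying ambient isotopy of $\mathbb{R}^3$, so the concatenation $s_2^{-1} \cdot s_1$ corresponds to a loop in the space of link diagrams of $W(D)$ that is contractible through ambient isotopies of $\mathbb{R}^3$; equivalently, it is equivalent to the trivial motion in the sense of \cite{g}.

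Then I would invoke Proposition \ref{prop:bqR} applied to $s_1$ and $s_2$, which yields immediately that the bijections they induce on $\Col{X}{\cdot}$ coincide, i.e.\ $\Psi'_{2,1} = \Psi'_{2,2}$, as required. Note there is no circularity: Proposition \ref{prop:bqR} is established via Lemmas \ref{lem:psi=psi} and \ref{lem:compatibility}, and these rely only on the \emph{existence} of a definition of $\Psi$ via some fixed sequence (the horizontally-sliced sequence of Subsection \ref{subsec:proof}) together with the intrinsic algebraic description $\Psi'$, which involves no Reidemeister-move choices whatsoever.

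The main obstacle is the first step: rigorously identifying two different diagrammatic perturbations of the same prescribed ambient isotopy as topologically same. However, this is a standard transversality argument parallel to the one used implicitly for quandle colorings in Subsection \ref{subsec:same_R-move}: any two generic approximations of a fixed path in the space of immersed link projections differ by a homotopy of 2-parameter families realizing only trivial motions in the sense of \cite{g}. Once this is in hand, the lemma follows in one line from Proposition \ref{prop:bqR}.
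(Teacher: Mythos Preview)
Your proposal is correct and takes essentially the same approach as the paper, whose proof is the single sentence ``This immediately follows from Proposition~\ref{prop:bqR}.'' Your added discussion of why the two sequences are topologically same and of the non-circularity (Proposition~\ref{prop:bqR} relies only on the algebraic $\Psi'$ and on $\Psi$ defined via one fixed sequence, not on Lemma~\ref{lem:well-def}) simply makes explicit what the paper leaves to the reader.
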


\begin{proof}
This immediately follows from Proposition \ref{prop:bqR}.
\end{proof}

\subsection{Algebraic description of $\Phi$}\label{subsec:alg-phi}
We define a map $\Phi'$ from biquandle colorings $\Col{X}{D}$
to quandle colorings $\Col{\Qdl{X}}{D}$. 
Let $\phi \colon \mathrm{As}(X) \times X \to \Qdl{X}(=X)$ be a map defined by 
$\phi(g,x)=x \diamond g^{-1}$, where a right action $\diamond$ of $\mathrm{As}(X)$ on $X$ 
is defined by $x \diamond y = x \os y$ and  $x \diamond y^{-1} = x \os^{\,-1} y$. 
We should note that the right action $\diamond$ 
is different from  the usual right action denoted by ``$\cdot$'' in Subsection~\ref{subsec:shadow}. 
Although the right action $\diamond$ is used in the definition of the map $\phi$, 
the symbol $\diamond$ itself does not appear explicitly in what follows. 
%
For a biquandle coloring 
$\mathcal{C} \colon \mathcal{SA}(D) \to X$ in $\Col{X}{D}$, define
$\col{C}^S \colon \mathcal{SA}(D) \to \mathrm{As}(X) \times X$ 
as in Subsection \ref{subsec:shadow}. 
Then we can check that 
a map $\phi \circ \col{C}^S \colon  \mathcal{SA}(D) \to \Qdl{X}$ 
factors through the projection $\pi: \mathcal{SA}(D) \to \mathcal{A}(D)$. 
Hence we have a unique map, 
denoted by $\Phi'(\mathcal{C}) \colon \mathcal{A}(D) \to \Qdl{X}(=X)$, 
satisfying $\Phi'(\mathcal{C}) \circ \pi = \phi \circ \col{C}^S$.

\begin{lem}
The map $\Phi'(\mathcal{C})$ is a quandle coloring in $\Col{\Qdl{X}}{D}$. 
Hence we have a map $\Phi'$ from $\Col{X}{D}$ to $\Col{\Qdl{X}}{D}$.  
\end{lem}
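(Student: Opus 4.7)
The idea is to identify $\Phi'$ with the diagrammatic map $\Phi$ defined in Subsection~\ref{subsec:bqtoq}. Since Lemmas~\ref{lem:phi1} and~\ref{lem:phi2} already establish that $\Phi(\mathcal{C})$ factors through $\pi$ and defines a quandle coloring, the identification $\Phi' = \Phi$ will immediately yield the lemma.

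To identify the two maps, fix a semi-arc $\sigma$ with specified region $\rho_\sigma$, and choose a path $\gamma$ from the unbounded region $R_\infty$ to $\rho_\sigma$ that misses crossings and meets semi-arcs $\tau_1, \ldots, \tau_k$ transversely in this order. Let $c_i := \col{C}(\tau_i)$, and let $\epsilon_i \in \{\pm 1\}$ record whether $\gamma$ crosses $\tau_i$ in the direction of the normal of $\tau_i$ or against it. By the recursive definition of the region coloring, $\col{C}^R(\rho_\sigma) = c_1^{\epsilon_1} \cdots c_k^{\epsilon_k} \in \mathrm{As}(X)$; using that $\diamond$ is a right action (which is well defined thanks to (BQ3)),
\[
\phi(\col{C}^S(\sigma)) = \col{C}(\sigma) \diamond \col{C}^R(\rho_\sigma)^{-1} = \col{C}(\sigma) \diamond c_k^{-\epsilon_k} \diamond \cdots \diamond c_1^{-\epsilon_1},
\]
and each factor $\diamond\, c_i^{-\epsilon_i}$ amounts to applying $\os$ or $\os^{-1}$ with the color $c_i$. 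On the other hand, pushing $\sigma$ out along $\gamma^{-1}$ (starting from $\rho_\sigma$) causes $\sigma$ to pass over $\tau_k, \tau_{k-1}, \ldots, \tau_1$ in this order, and the biquandle coloring rule at each newly formed crossing updates the color of $\sigma$ by exactly the same operation $\os^{\pm 1}$ with $c_i$. Consequently the color of the pushed-out semi-arc agrees with the expression above, giving $\Phi'(\mathcal{C}) \circ \pi = \Phi(\mathcal{C}) \circ \pi$ and hence $\Phi'(\mathcal{C}) = \Phi(\mathcal{C})$ as maps on $\mathcal{A}(D)$.

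The main obstacle is the sign-matching at each over-passage: one must verify that pushing $\sigma$ over $\tau_i$ in the direction of the normal of $\tau_i$ contributes $\os^{-1} c_i$ to the color of $\sigma$, while pushing against the normal contributes $\os\, c_i$. This is a local check at a single intersection of the push-out track with $\tau_i$, taking into account the orientation of $\tau_i$ and the fact that $\sigma$ is lifted above the rest of the diagram; the inversion in the formula $\phi(g,x) = x \diamond g^{-1}$ is precisely what aligns these sign conventions with those used in the region coloring. Once the identification $\Phi' = \Phi$ is established, both the factorization of $\phi \circ \col{C}^S$ through $\pi$ and the quandle coloring condition for $\Phi'(\mathcal{C})$ follow immediately from Lemmas~\ref{lem:phi1} and~\ref{lem:phi2}, completing the proof.
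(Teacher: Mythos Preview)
Your argument is correct but takes a different route from the paper. The paper proves this lemma by a direct algebraic check at a single crossing: with $\sigma_i,\sigma_j,\sigma_k$ as in Figure~\ref{fig:crossing} and $g=\col{C}^R(\rho_{\sigma_i})$, one computes
\[
(\Phi'(\col{C}))(\sigma_k)=\phi\bigl(g\cdot\col{C}(\sigma_j),\,\col{C}(\sigma_i)\us\col{C}(\sigma_j)\bigr)
=\phi\bigl(g,\,\col{C}(\sigma_i)\s\col{C}(\sigma_j)\bigr),
\]
and then uses that $\phi(g,\,\cdot\,)=(\,\cdot\,)\diamond g^{-1}$ is a quandle automorphism of $\Qdl{X}$ to obtain the coloring relation. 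Your approach instead establishes $\Phi'=\Phi$ (which is exactly the content of the \emph{next} lemma in the paper) and then invokes Lemmas~\ref{lem:phi1} and~\ref{lem:phi2}. This is perfectly valid and has the advantage of collapsing two lemmas into one; the paper's direct argument, on the other hand, is more self-contained and isolates the conceptual point---that the $\diamond$-action by elements of $\mathrm{As}(X)$ gives quandle automorphisms of $\Qdl{X}$---which is really why $\Phi'$ works. Your acknowledged ``sign-matching'' step is precisely the computation the paper carries out (in the trivial-tangle setting) when proving $\Phi'=\Phi$, so nothing is missing.
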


\begin{proof}
For a crossing point of $D$, let $\sigma_i, \sigma_j, \sigma_k, \sigma_l \in \col{SA}(D)$ be the four semi-arcs as in Figure \ref{fig:crossing} (right) and let $g \in \mathrm{As}(X)$ be the region color of the specified region. Then we have
\begin{align*}
(\Phi'(\col{C}))(\sigma_k) 
&= \phi \left( g \cdot \col{C}(\sigma_j), \col{C}(\sigma_k) \right)\\
&= \phi \left( g \cdot \col{C}(\sigma_j), \col{C}(\sigma_i) \us \col{C}(\sigma_j) \right)\\
&= \phi \left( 
g, (\col{C}(\sigma_i) \us \col{C}(\sigma_j)) \os^{\,-1} \col{C}(\sigma_j) 
\right)\\
&= \phi \left( g, \col{C}(\sigma_i) * \col{C}(\sigma_j) \right).
\end{align*}
Since the right action of $g^{-1} \in \mathrm{As}(X)$ on $X = \Qdl{X}$ is a quandle automorphism on $\Qdl{X}$, we have
\[ (\Phi'(\col{C}))(\sigma_k) = (\Phi'(\col{C}))(\sigma_i) * (\Phi'(\col{C}))(\sigma_j) \]
as required.
\end{proof}

\begin{lem}
The correspondence $\Phi'$ is equal to $\Phi$.
\end{lem}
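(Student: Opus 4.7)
The plan is to verify $\Phi = \Phi'$ by reducing, as in the proof of Lemma \ref{lem:psi=psi}, to an explicit computation on a trivial tangle. First, by Lemmas \ref{lem:phi1} and \ref{lem:phi2} the map $\Phi(\mathcal{C})$ factors through $\mathcal{A}(D)$ and its value on a semi-arc is independent of the chosen pushing-out path; on the other side, $\Phi'(\mathcal{C})$ factors through $\mathcal{A}(D)$ by construction. So it suffices to verify $\Phi = \Phi'$ on the semi-arcs of a single horizontal strip that is a trivial tangle of vertical strings, exactly as in the proofs of Theorem~\ref{thm:main} and Lemma~\ref{lem:psi=psi}.

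Fix such a strip $T$ with vertical strings $\sigma_1,\ldots,\sigma_n$ (from left to right), put $b_i := \mathcal{C}(\sigma_i)$ and let $\epsilon_i = +1$ (downward) or $-1$ (upward) be the orientation sign, regarding the leftmost region as the unbounded region as in Subsection~\ref{subsec:proof}. For $\Phi$, I would push $\sigma_j$ leftward out of $T$; each time the pushed bulge crosses $\sigma_i$ (for $i<j$), a local biquandle computation at the newly created crossing shows that the running color is modified by $\os^{-\epsilon_i}$ with $b_i$, yielding
\[
\Phi(\mathcal{C})(\sigma_j) \;=\; \bigl(\cdots (b_j \os^{-\epsilon_{j-1}} b_{j-1})\cdots\bigr) \os^{-\epsilon_1} b_1.
\]

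For $\Phi'$, by the same calculation of region colors as in the proof of Lemma~\ref{lem:psi=psi}, traversing a horizontal path from the leftmost region to $\rho_{\sigma_j}$ gives $\mathcal{C}^R(\rho_{\sigma_j}) = b_1^{\epsilon_1}\cdots b_{j-1}^{\epsilon_{j-1}} \in \mathrm{As}(X)$. Substituting into $\Phi'(\mathcal{C})(\sigma_j) = b_j \diamond \mathcal{C}^R(\rho_{\sigma_j})^{-1}$ and unfolding via $x \diamond y^{\pm 1} = x \os^{\pm 1} y$, the right-hand side coincides with the expression above for $\Phi(\mathcal{C})(\sigma_j)$. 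Hence the two maps agree on every trivial-tangle strip, and the general case follows by the slicing argument of Subsection~\ref{subsec:proof} together with the compatibility of both $\Phi$ and $\Phi'$ with the Reidemeister-II moves used to slide strings past one another.

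The main obstacle is the single-crossing check underlying the push-out formula, namely that pushing $\sigma_j$ once across $\sigma_i$ modifies its color precisely by $\os^{-\epsilon_i}$ with $b_i$. This is a local computation that forces us to pin down the over/under convention of the push and the direction convention of the normal vector used in the region coloring, and then to apply the biquandle axiom at the freshly created crossing; it is essentially the dual of the pull-away check already carried out in the proof of Lemma~\ref{lem:psi=psi}, and can be handled in the same case-by-case manner on the four combinations of signs $(\epsilon_i,\epsilon_j)$.
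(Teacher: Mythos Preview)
Your approach is essentially the same as the paper's: reduce to a trivial-tangle strip, compute both sides explicitly, and match them via the definition of $\phi$. There is one slip worth fixing. Your formula $\mathcal{C}^R(\rho_{\sigma_j}) = b_1^{\epsilon_1}\cdots b_{j-1}^{\epsilon_{j-1}}$ and your push-out formula for $\Phi(\mathcal{C})(\sigma_j)$ are only valid when $\epsilon_j = +1$, since in that case the specified region of $\sigma_j$ lies to its left; when $\epsilon_j = -1$ the specified region is on the right, so the region color is $b_1^{\epsilon_1}\cdots b_j^{\epsilon_j}$ (this is exactly the case distinction recorded in the proof of Lemma~\ref{lem:psi=psi}, which you cite but quote only half of), and a straight leftward push does not land on the specified side. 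The paper sidesteps this by observing, as you do, that both $\Phi$ and $\Phi'$ factor through $\mathcal{A}(D)$, and then checking the equality \emph{only} for those $j$ with $\epsilon_j = 1$; you should make that restriction explicit rather than asserting the formulas for all $j$. With that adjustment your argument is complete, and the final paragraph about Reidemeister-II compatibility is unnecessary.
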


\begin{proof}
We take $\sigma_i$ and $\epsilon_i$ as in the proof of Lemma \ref{lem:psi=psi}; it is sufficient to show that $(\Phi(\mathcal{C}))(\sigma_j) = (\Phi'(\mathcal{C}))(\sigma_j)$ for each $j$ such that $\epsilon_j = 1$. We set $x_i = \mathcal{C}(\sigma_i)$. As we push out $\sigma_j$ to the left side, we find that $(\Phi(\mathcal{C}))(\sigma_j) = ((x_j \os^{\, -\epsilon_{j-1}} x_{j-1}) \cdots ) \os^{\, -\epsilon_1} x_1$. Also, by the definition of $\Phi',$ we have $(\Phi'(\mathcal{C}))(\sigma_j) = \phi(x_1^{\epsilon_1} \cdots x_{j-1}^{\epsilon_{j-1}}, x_j)$. Then, the definition of $\phi$ immediately shows that $(\Phi(\mathcal{C}))(\sigma_j) = (\Phi'(\mathcal{C}))(\sigma_j)$.
\end{proof}

\section{Cocycle invariants}\label{sec:ci}
We review biquandle cocycle invariants \cite{ces, kkkl} in Subsection~\ref{subsec:bqci} 
and shadow quandle cocycle invariants \cite{cegs, cks}  
in Subsection~\ref{subsec:sqci}. 
Then, in Subsection~\ref{subsec:qci}, 
we interpret biquandle cocycle invariants in terms of shadow quandle cocycle invariants 
by using the map $\psi: \mathrm{As}(\Qdl{X}) \times \Qdl{X} \to X$ defined in 
Subsection~\ref{subsec:alg-psi} for a biquandle $X$.

\subsection{Biquandle cocycle invariants}\label{subsec:bqci}
Let $X$ be a biquandle and $A$ an abelian group. 
A map $\theta \colon X^n \to A$ is called a 
\textit{biquandle $n$-cocycle} if $\theta$ satisfies the conditions 
\begin{align*}
\sum_{i=1}^{n+1} (-1)^{i-1}  \bigl( & \theta(x_1, \dots, x_{i-1}, x_{i+1}, \dots, x_{n+1})\\
& - \theta(x_1 \us x_i, \dots, x_{i-1} \us x_i, x_{i+1} \os x_i, \dots, x_{n+1} \os x_i) \bigr) = 0_A 
\end{align*}
and 
\[\theta(x_1, \dots, x_{j-1}, y,  y, x_{j+2}, \dots, x_n) = 0_A \quad (j=1, \dots, n-1)\]
for any $x_1, \dots, x_{n+1}, y \in X$.

Let $\theta: X \times X \to A$ be a biquandle 2-cocycle. For an $X$-coloring $\mathcal{C}$ on an oriented link diagram $D$, 
we associate a weight $\pm \theta(x,y)$ 
to each crossing of $D$ as in Figure~\ref{fig:bqci}, 
where $x,y \in X$ are the colors of the semi-arcs indicated in the figure. 
Then we sum them up for all the crossings of $D$ 
to obtain $(\Phi_{\theta}(D))(\mathcal{C}) \in A$. 
This defines a map $\Phi_{\theta}(D): \Col{X}{D} \to A$, 
which is called a \textit{biquandle cocycle invariant}. 
When $X$ is finite, $\Phi_{\theta}(D)$ is often considered as an element 
\[\sum_{\mathcal{C} \in \Col{X}{D}} (\Phi_{\theta}(D))(\mathcal{C}) \]
in the group ring $\Z[A]$ and does not depend on the choice of the diagram $D$. 
See \cite{ces} for the biquandle cocycle invariant of links (and 
\cite{kkkl} for that of surface links). 
\begin{figure}[thbp]
\includegraphics[width=0.50\textwidth, pagebox=artbox]{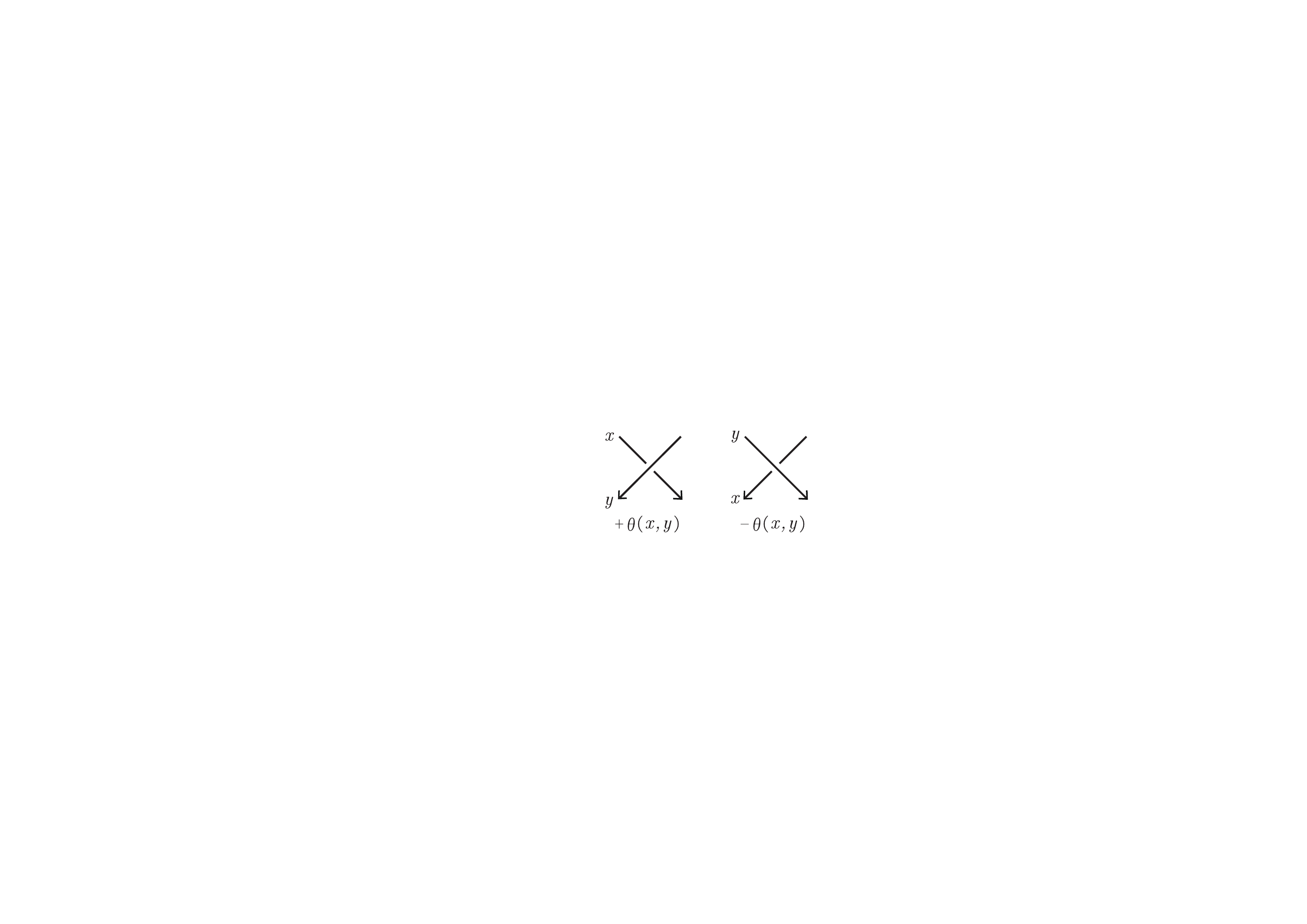}
\caption{Weights for biquandle cocycle invariants}\label{fig:bqci}
\end{figure}

\subsection{Shadow quandle cocycle invariants}\label{subsec:sqci}
%
Let $Q$ be a quandle and $A$ an abelian group. 
A map $\theta \colon \mathrm{As}(Q) \times Q^n \to A$ is called a 
\textit{shadow quandle $n$-cocycle} if $\theta$ satisfies the conditions 
\begin{align*}
\sum_{i=1}^{n+1} (-1)^{i-1}  \bigl( & \theta(p, a_1, \dots, a_{i-1}, a_{i+1}, \dots, a_{n+1})\\
& - \theta(p \cdot a_i, a_1 \s a_i, \dots, a_{i-1} \s a_i, a_{i+1}, \dots, a_{n+1}) \bigr) = 0_A 
\end{align*}
and
\[\theta(p, a_1, \dots, a_{j-1}, b, b, a_{j+2}, \dots, a_n) = 0_A \quad (j=1, \dots, n-1)\]
for any $p \in \mathrm{As}(Q)$ and $a_1, \dots, a_{n+1},b \in Q$.

Let $\theta: \mathrm{As}(Q) \times Q \times Q \to A$ be a shadow quandle $2$-cocycle. 
For a $Q$-coloring $\mathcal{C}$ on an oriented link diagram $D$ 
(and the unique shadow coloring 
$\col{C}^S \colon \mathcal{SA}(D) \to \mathrm{As}(Q) \times Q$ with respect to $\mathcal{C}$), 
we associate a weight $\pm \theta(p,a,b)$ 
to each crossing of $D$ as in Figure~\ref{fig:sqci}, 
where $p \in {\rm As}(Q)$ is the color of the region indicated in the figure. 
Then we sum them up for all the crossings of $D$ 
to obtain $(\Phi_{\theta}^{S}(D))(\col{C}) \in A$. 
This defines a map $\Phi_{\theta}^{S}(D): \Col{Q}{D} \to A$, which is called a \textit{shadow quandle cocycle invariant}. 
When $Q$ is finite, $\Phi_{\theta}^{S}(D)$ is often considered as an element 
\[\sum_{\mathcal{C} \in \Col{Q}{D}} (\Phi_{\theta}^{S}(D))(\col{C}) \]
in the group ring $\Z[A]$ and does not depend on the choice of the diagram $D$. 
We refer to \cite{kam} for more details. 
\begin{figure}[thbp]
\includegraphics[width=0.50\textwidth, pagebox=artbox]{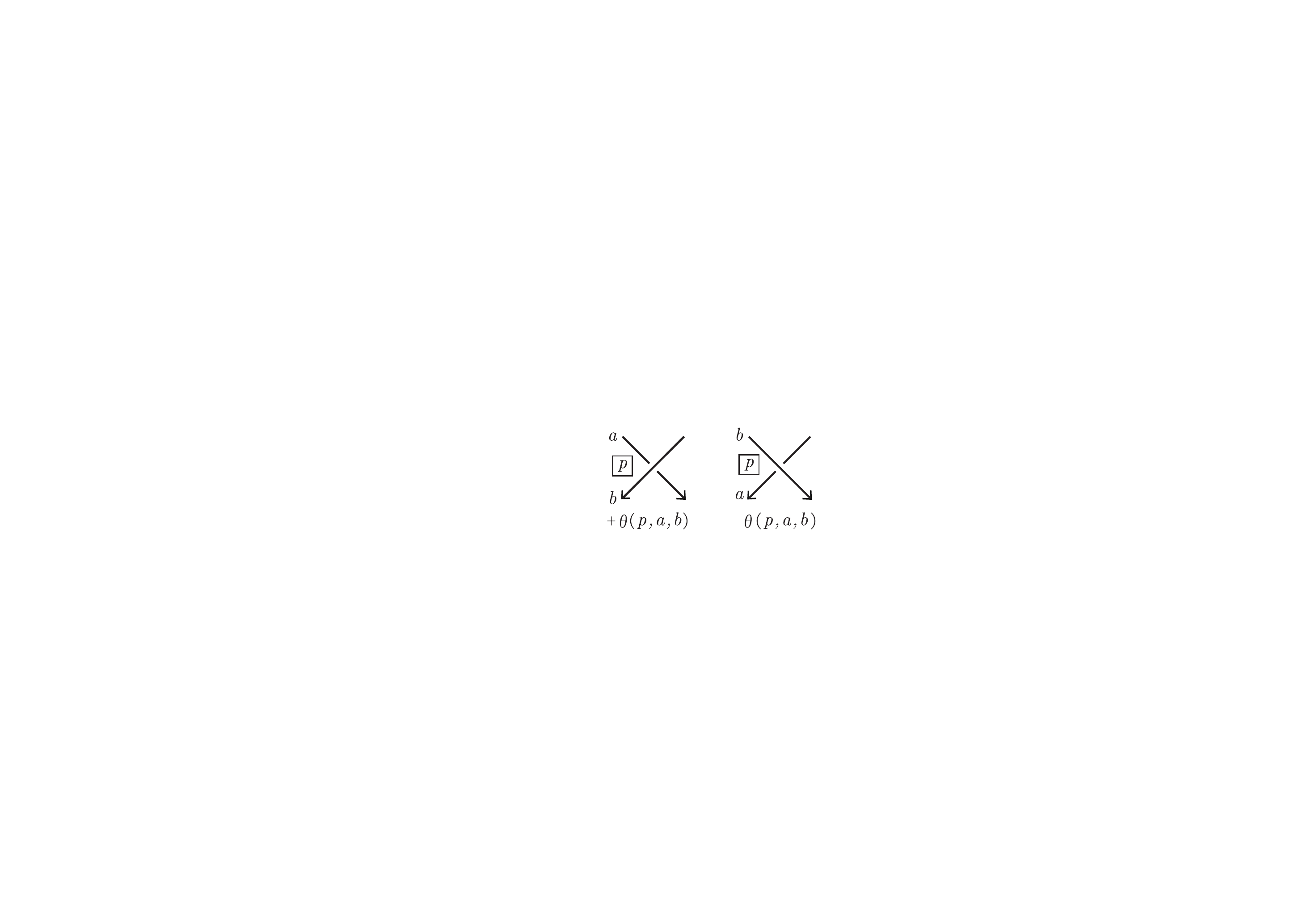}
\caption{Weights for shadow quandle cocycle invariants}\label{fig:sqci}
\end{figure}

\subsection{Relationship between two cocycle invariants}\label{subsec:qci}
For a biquandle $X$ and an abelian group $A$, 
let $\theta \colon X^n \to A$ be a biquandle $n$-cocycle. 
The pullback, $\psi^* \theta \colon \mathrm{As}(\Qdl{X}) \times \Qdl{X}^n \to A$, 
of $\theta$ by the map $\psi \colon \mathrm{As}(\Qdl{X}) \times \Qdl{X} \to X$ 
is defined by 
\[
(\psi^* \theta) (p,a_1, \dots, a_n) := \theta(\psi(p,a_1),\dots, \psi(p,a_n))
\]
for $p \in \mathrm{As}(\Qdl{X})$ and $a_1, \dots, a_n \in \Qdl{X}$. 
Then we find that $\psi^* \theta$ is a shadow quandle $n$-cocycle: for any $p \in \mathrm{As}(\Qdl{X})$ and $a_1, \dots, a_{n+1} \in \Qdl{X}$, we have 
\begin{align*}
\sum_{i=1}^{n+1} (-1)^{n-1}((\psi^* \theta&)(p,a_1,\dots,a_{i-1},a_{i+1},\dots,a_{n+1})\\
- (&\psi^*\theta)(p \cdot a_i, a_1 \s a_i, \dots, a_{i-1} \s a_i, a_{i+1},\dots,a_{n+1}))\\
=\sum_{i=1}^{n+1} (-1)^{n-1}(\theta(\psi(&p,a_1),\dots,\psi(p,a_{i-1}),\psi(p,a_{i+1}),\dots,\psi(p,a_{n+1}))\\
- \theta(&\psi(p \cdot a_i, a_1 \s a_i), \dots, \psi(p \cdot a_i, a_{i-1} \s a_i),\\
&\qquad\qquad\psi(p \cdot a_i, a_{i+1}),\dots, \psi(p \cdot a_i, a_{n+1})))\\
=\sum_{i=1}^{n+1} (-1)^{n-1}(\theta(\psi(&p,a_1),\dots,\psi(p,a_{i-1}),\psi(p,a_{i+1}),\dots,\psi(p,a_{n+1}))\\
- \theta(&\psi(p,a_1) \us \psi(p,a_i), \dots, \psi(p, a_{i-1}) \us \psi(p,a_i),\\
&\qquad\psi(p, a_{i+1}) \os \psi(p,a_i),\dots, \psi(p, a_{n+1}) \os \psi(p,a_i)))\\
= 0_A, \hspace{2.15cm}
\end{align*}
where the second equality follows from the equation (\ref{eq:us}) in the proof of Lemma~\ref{lem:psi} and the definition of $\psi$. 
In fact, using the map $\psi$, 
we can define a chain map from the shadow quandle chain complex to the biquandle chain complex and $\psi^*$ above is the dual of it.

\begin{theorem}\label{thm:bci}
For a biquandle $X$ and an abelian group $A$, 
let $\theta \colon X \times X \to A$ be a biquandle $2$-cocycle. 
Then, for a diagram $D$ of an oriented link, 
the biquandle cocycle invariant with respect to $\theta$ 
can be considered as the shadow quandle cocycle invariant with respect to 
$\psi^* \theta$. In particular, we have
\[\Phi_{\theta}(D) = \Phi_{\psi^* \theta}^{S} (D)\] 
in $\Z[A]$ when $X$ is finite. 
\end{theorem}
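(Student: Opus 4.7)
The plan is to establish, for each quandle coloring $\mathcal{C} \in \Col{\Qdl{X}}{D}$, a crossing-local identity between the biquandle cocycle weight for $\Psi(\mathcal{C})$ and the shadow quandle cocycle weight for $\mathcal{C}$, and then to obtain the group-ring identity by summing over colorings via the bijection of Theorem~\ref{thm:main}.

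The key input is the algebraic description of $\Psi$ in Lemma~\ref{lem:psi=psi}: one has $\Psi(\mathcal{C})(\sigma) = \psi(\col{C}^R(\rho_\sigma), \mathcal{C}(\sigma))$ for every semi-arc $\sigma$, where $\col{C}^R$ is the region coloring associated to $\mathcal{C}$ (regarded as a $\Qdl{X}$-coloring, hence a biquandle coloring) and $\rho_\sigma$ is the specified region. Fix a crossing of $D$ and let $\sigma, \sigma'$ denote the two semi-arcs whose colors appear in the biquandle cocycle weight of Figure~\ref{fig:bqci}. Using this formula together with the definition of $\psi^*\theta$, the biquandle weight $\pm \theta(\Psi(\mathcal{C})(\sigma), \Psi(\mathcal{C})(\sigma'))$ equals
\[
\pm\, \theta\bigl(\psi(\col{C}^R(\rho_\sigma), \mathcal{C}(\sigma)),\, \psi(\col{C}^R(\rho_{\sigma'}), \mathcal{C}(\sigma'))\bigr),
\]
whereas the shadow quandle weight is $\pm (\psi^*\theta)(p,a,b) = \pm \theta(\psi(p,a), \psi(p,b))$, with $p$ the region color and $a, b$ the two arc colors indicated in Figure~\ref{fig:sqci}. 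The main local assertion to verify is that, under the conventions drawn in Figures~\ref{fig:bqci} and \ref{fig:sqci}, both $\rho_\sigma$ and $\rho_{\sigma'}$ coincide with the region whose color is $p$, and that the semi-arc colors $\mathcal{C}(\sigma), \mathcal{C}(\sigma')$ descend via the surjection $\pi \colon \mathcal{SA}(D) \to \mathcal{A}(D)$ to the arc colors $a, b$. Once this is granted, the two weights agree at every crossing, yielding $(\Phi_\theta(D))(\Psi(\mathcal{C})) = (\Phi_{\psi^*\theta}^S(D))(\mathcal{C})$.

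For the identity in $\Z[A]$ when $X$ is finite, we invoke the bijection $\Psi \colon \Col{\Qdl{X}}{D} \to \Col{X}{D}$ of Theorem~\ref{thm:main} to reindex the sum:
\[
\Phi_\theta(D) = \sum_{\mathcal{D} \in \Col{X}{D}} (\Phi_\theta(D))(\mathcal{D}) = \sum_{\mathcal{C} \in \Col{\Qdl{X}}{D}} (\Phi_\theta(D))(\Psi(\mathcal{C})) = \sum_{\mathcal{C} \in \Col{\Qdl{X}}{D}} (\Phi_{\psi^*\theta}^S(D))(\mathcal{C}) = \Phi_{\psi^*\theta}^S(D).
\]

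The hard part is the crossing-local convention check described above: namely, ensuring that at every crossing (positive or negative, with any orientation configuration) the specified regions $\rho_\sigma$ and $\rho_{\sigma'}$ of the two semi-arcs featured in the biquandle weight coincide with the region whose color $p$ appears in the shadow quandle weight. This amounts to a careful inspection of Figures~\ref{fig:bqci} and \ref{fig:sqci} together with the normal-vector convention implicit in the definition of $\rho_\sigma$ from Subsection~\ref{subsec:shadow}; once one crossing type is handled, the remaining cases follow by reversing orientations or swapping over/under.
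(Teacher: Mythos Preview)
Your proposal is correct and follows essentially the same approach as the paper's proof: both use Lemma~\ref{lem:psi=psi} to write $(\Psi(\mathcal{C}))(\sigma) = \psi(\col{C}^R(\rho_\sigma),\mathcal{C}(\sigma))$, observe that this makes the crossing-by-crossing weights agree, and then sum over colorings via the bijection of Theorem~\ref{thm:main}. The paper states the local weight equality in a single sentence, while you spell out explicitly that the specified regions $\rho_\sigma,\rho_{\sigma'}$ coincide with the region carrying $p$; this is a fair elaboration but not a different argument.
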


\begin{proof}
For a $\Qdl{X}$-coloring $\mathcal{C}: \mathcal{A}(D) \to \Qdl{X}$, 
recall that the shadow coloring $\col{C}^S \colon \mathcal{SA}(D) \to \mathrm{As}(\Qdl{X}) \times \Qdl{X}$ 
is the map defined by $\col{C}^S (\sigma) = (\mathcal{C}^R(\rho_\sigma), \mathcal{C}(\sigma))$ for $\sigma \in \mathcal{SA}(D)$. 
By the definition of $\Psi'$ and Lemma \ref{lem:psi=psi}, we have 
\[ 
\bigl(\Psi (\mathcal{C}) \bigr) (\sigma) = \psi \bigl( \col{C}^S (\sigma) \bigr)
= \psi \bigl( \mathcal{C}^R(\rho_{\sigma}), \mathcal{C}(\sigma) \bigr)
\]
for $\sigma \in \mathcal{SA}(D)$. 
This implies that at every crossing of $D$, the weight with respect to $\col{C}^S$ 
and $\psi^* \theta$ equals that with respect to $\Psi(\mathcal{C})$ and $\theta$. 
Hence we have
\begin{align*}
\Phi_{\theta}(D) &= \sum_{\mathcal{C} \in \Col{X}{D}} \bigl( \Phi_{\theta}(D) \bigr) (\mathcal{C})\\
&= \sum_{\mathcal{C} \in \text{Col}_{\Qdl{X}}^{}(D)} \bigl( \Phi_{\theta}(D) \bigr) (\Psi (\mathcal{C}))\\
&= \sum_{\mathcal{C} \in \text{Col}_{\Qdl{X}}^{}(D)}  \bigl( \Phi_{\psi^* \theta}^{S}(D) \bigr) (\col{C}
)\\
&= \Phi_{\psi^* \theta}^{S} (D), 
\end{align*}
as required. 
\end{proof}

\section{Homotopy invariants}\label{sec:hi}
We define biquandle spaces and biquandle homotopy invariants 
in Subsection~\ref{subsec:BX}, 
and extend the two maps $\psi$ and $\phi$ 
in Subsection~\ref{subsec:one-to-one}. 
Then, we show that biquandle homotopy invariants and quandle 
homotopy invariants are equivalent in Subsection~\ref{subsec:equiv}

\subsection{Biquandle spaces and homotopy invariants}\label{subsec:BX}
For a biquandle $X$, which we consider as a topological space with discrete topology,  
we take a disjoint union 
\[
\bigsqcup_{n \geq 0} [0,1]^n \times X^n 
\]
and consider the following relation $\sim$: 
\[\begin{array}{l}
(t_1, \ldots, t_{i-1}, 0, t_{i+1}, \ldots, t_n; 
x_1, \ldots, x_{i-1}, x_i, x_{i+1}, \ldots, x_n) \\
\qquad\qquad\sim 
(t_1, \ldots, t_{i-1}, t_{i+1}, \ldots, t_n; 
x_1, \ldots, x_{i-1}, x_{i+1}, \ldots, x_n), \\
(t_1, \ldots, t_{i-1}, 1, t_{i+1}, \ldots, t_n; 
x_1, \ldots, x_{i-1}, x_i, x_{i+1}, \ldots, x_n) \\
\qquad\qquad\sim 
(t_1, \ldots, t_{i-1}, t_{i+1}, \ldots, t_n; 
x_1 \us x_i, \ldots, x_{i-1} \us x_i, x_{i+1} \os x_i, \ldots, x_n \os x_i). 
\end{array}\]
Then the \textit{birack space} \cite{frs1, frs2}, denoted by $BX$, 
is defined to be the quotient space. 
We remark that the quotient map defines a structure of a CW complex on $BX$, where each $n$-cell is labeled by an $n$-tuple $(x_1,\dots,x_n) \in X^n$, and we find that $\pi_1(BX) \cong {\rm As}(X)$ from the 2-skeleton. Furthermore, we set another relation $\sim_D$: 
\[\begin{array}{l}
(t_1,\dots,t_{i-1}, t_i, t_{i+1}, t_{i+2}, \dots, t_n; x_1,\dots, x_{i-1}, x, x, x_{i+2}, \dots, x_n) \\
\qquad\qquad \sim_D
(t_1,\dots,t_{i-1}, t'_i, t'_{i+1}, t_{i+2}, \dots, t_n; x_1,\dots, x_{i-1}, x, x, x_{i+2}, \dots, x_n)
\end{array}\]
when $t_i+t_{i+1} = t'_i + t'_{i+1}$. 
We call the quotient space $\left(\bigsqcup_n [0,1]^n \times X^n\right)/(\sim,\sim_D)$ the \textit{biquandle space} and denote it by $B^QX$. 
We note that the biquandle (classifying) space might be first mentioned in \cite{f} 
with no rigorous definition. 
Let $X_D^n \subset X^n$ be the set
\[\{(x_1,\dots,x_n) \in X^n \mid \text{$x_i = x_{i+1}$ for some $i$}\}
\ \text{for}\ n \geq 2 \quad \text{and}\quad X_D^0 = X_D^1=\emptyset, 
\]
and define $X_{ND}^n = X^n \backslash X_D^n$. The composite
\[\bigsqcup_{n\geq0} [0,1]^n \times X_{ND}^n \hookrightarrow \bigsqcup_{n \geq 0} [0,1]^n \times X^n \to B^QX\]
defines a CW-complex structure on $B^QX$, and we see that $\pi_1(B^QX) \cong {\rm As}(X)$ from the 2-skeleton. By definition, we have the quotient map $BX \to B^QX$, and it maps an $n$-cell of $BX$ labeled by $X_{ND}^n$ to the cell of $B^QX$ labeled by the same element and one labeled by $X_D^n$ to the $(n-1)$-skeleton $B^QX^{(n-1)}$; this is a cellular map.

By equipping a quandle with a biquandle structure as in Remark~\ref{rem:qbq},
we can define the \textit{quandle space} as the biquandle space. 
We should note that quandle spaces have already been 
\lq\lq defined\rq\rq\ in \cite{nos2, cly}. 
See Remark~\ref{rem:q-space} below for the meaning of quotation marks.

\begin{remark}\label{rem:q-space}
Quandle spaces were introduced in \cite{nos1} up to $3$-skeletons, 
and the whole structures were defined in \cite[Definition 2.1]{nos2}. 
For a quandle $Q$, Nosaka \cite{nos2} considered a map from 
a disjoint union of infinitely many cells, which correspond to degenerate parts, 
to the (bi)rack space $B Q$. 
Then he defined a quandle space as the cone of his map.  
However his construction gives us an undesired space. 
Since each cell of the disjoint union is contractible, 
his quandle space is homotopy equivalent to the wedge sum of 
$B Q$ and a bouquet of infinitely many circles. 
For example, 
the fundamental group of his space is not isomorphic to $\mathrm{As}(Q)$, 
since it is 
the free product of $\mathrm{As}(Q)$ and the free group of infinite rank. 
It should be noted that 
the explanation after \cite[Definition 2.1]{nos2} 
describes a $4$-skeleton of a desired space  
by attaching additional cells to the (bi)rack space. 
%
This low-dimensional part accords with 
the corresponding part of our space above (when X is a quandle) 
up to homotopy equivalence.
%
We could also define the whole biquandle space by attaching cells 
to the birack space, though we omit it. 

There was an alternative definition \cite[p.49]{cly} of quandle spaces.  
Carter, Lebed and Yang \cite{cly} defined a quandle space by 
taking a quotient of the (bi)rack space with respect to 
a relation, which was essentially defined in \cite[Remark 2.5]{lv1}.  
However their relation is too strong and 
gives us an undesired space. 
Hereafter, in this remark, 
we rely on the reader's familiarity with \cite[Section 3]{cly}. 
They considered the relation defined by 
$( x, \widetilde{S}_j(p) ) \sim ( \widetilde{s}^j(x), p )$, 
where an essential part of $\widetilde{S}_j$ is 
a map from $[0,1]^2$ to $[0,1]$ 
sending $(s,t)$ to $s + t -st$. 
Since this map sends $(s,1)$ to $1$ for any $s \in [0,1]$ 
and also sends $(1,t)$ to $1$ for any $t \in [0,1]$, 
we can observe that their relation identifies all points of $( \widetilde{s}^j(x), p )$ 
whose $(j+1)$-st or $(j+2)$-nd entries equal to $1$. 
Hence their quandle space turns out to be a single point. 
It should be noted that if we slightly change the definition of $\widetilde{S}_j$ 
then their space with respect to the modified relation 
may accord with our space above (when $X$ is a quandle). 
The essential part of $\widetilde{S}_j$ must be changed into 
a map from $[0,1]^2$ to $[0,1]$ 
sending $(s,t)$ to the fractional part of $s + t$, 
though we omit details. 
\end{remark}

Let $X$ be a biquandle and $\col{C}$ an $X$-coloring on 
an oriented link diagram $D$. 
These data define a map $(\Xi_X(D))(\col{C}): S^2 \to BX$ as follows. 
First, we regard $D \subset S^2 =\mathbb{R}^2 \cup \{ \infty \}$ as a 4-valent graph and take a dual decomposition: a crossing point correspond to a 2-cell, a semi-arc to a 1-cell, and a region to a 0-cell (we here assume the closure of every region to be a disc for the simplicity). Then, we let $(\Xi_X(D))(\col{C})$ map every 0-cell to the single vertex of $BX$ and each 1-cell corresponding to $\sigma \in \col{SA}(D)$ to the 1-cell of $BX$ labeled by $\col{C}(\sigma)$. Finally, we define $(\Xi_X(D))(\col{C})$ so that it sends a 2-cell to the 2-cell labeled by $(\col{C}(\sigma_1), \col{C}(\sigma_2))$, where $\sigma_1$ and $\sigma_2$ are the two semi-arcs faced with the specified region of the crossing and $\sigma_1$ is the one belonging to the under arc. 
The map $(\Xi_X(D))(\col{C}): S^2 \to BX$ is defined up to homotopy and, by an abuse of notation, we denote the composite $S^2 \to BX \to B^QX$ with the quotient by the same symbol; we obtain an element $(\Xi_X(D))(\col{C}) \in \pi_2(B^Q X)$.
The map $\Xi_X(D) \colon \Col{X}{D} \to \pi_2(B^QX)$ is called a 
\textit{biquandle homotopy invariant} of $L$. 
When $X$ is finite,  $\Xi_X(D)$ can be considered as an element
\[
\sum_{\mathcal{C} \in \Col{X}{D}} ( \Xi_X(D) )( \mathcal{C} ) 
\]
in the group ring $\Z [\pi_2(B^QX)]$ and 
does not depend on the choice of the diagram $D$. 
We should note that Nosaka defined the \textit{quandle homotopy invariant} 
for links in \cite{nos1} (and for surface links in \cite{nos2}). 
By equipping a quandle with a biquandle structure as in Remark~\ref{rem:qbq}, 
we can also define the quandle homotopy invariant as 
the biquandle homotopy invariant.

\subsection{Generalization of one-to-one 
correspondence}\label{subsec:one-to-one}
For a biquandle $X$, a map $\psi_{\mathrm{As}} \colon \mathrm{As}(\Qdl{X}) \to \mathrm{As}(X)$ 
is defined (inductively) by 
\[\begin{array}{rcl}
\psi_{\mathrm{As}}(e) & := & e, \\
\psi_{\mathrm{As}}(p \cdot a) & := & \psi_{\mathrm{As}}(p) \cdot \psi(p, a), \\
\psi_{\mathrm{As}}(p \cdot a^{-1}) & := & \psi_{\mathrm{As}}(p) \cdot \psi(p \cdot a^{-1}, a)^{-1}
\end{array}\] 
for $p \in \mathrm{As}(\Qdl{X})$ and $a \in \Qdl{X}$. We can check that $\psi_{\mathrm{As}}$ is well defined; e.g.,
\begin{align*}
\psi_{\mathrm{As}}(p \cdot b \cdot (a \s b)) &= \psi_{\mathrm{As}}(p)\cdot \psi(p,b) \cdot \psi(p\cdot b, a \s b)\\
&= \psi_{\mathrm{As}}(p) \cdot \psi(p, b) \cdot (\psi(p, a) \us \psi(p, b))\\
&= \psi_{\mathrm{As}}(p) \cdot \psi(p, a) \cdot (\psi(p, b) \os \psi(p, a))\\
&= \psi_{\mathrm{As}}(p) \cdot \psi(p, a) \cdot \psi(p\cdot a,b)\\
&= \psi_{\mathrm{As}}(p \cdot a \cdot b)
\end{align*}
for $p \in \mathrm{As}(\Qdl{X})$ and $a, b \in \Qdl{X}$, where the second equality follows from the equation (\ref{eq:us}) in the proof of Lemma~\ref{lem:psi}. 
Let $\tilde{\psi} \colon \mathrm{As}(\Qdl{X}) \times \Qdl{X} \to \mathrm{As}(X) \times X$ 
be the map defined by $\tilde{\psi}(p,a) = ( \psi_{\mathrm{As}}(p), \psi(p,a) )$ 
for $p \in \mathrm{As}(\Qdl{X})$ and $a \in \Qdl{X}$. 

\begin{lem}\label{lem:psi-as-bij}
The maps $\psi_{\mathrm{As}}: \mathrm{As}(\Qdl{X}) \to \mathrm{As}(X)$ and $\tilde{\psi} \colon \mathrm{As}(\Qdl{X}) \times \Qdl{X} \to \mathrm{As}(X) \times X$ are bijective.
\end{lem}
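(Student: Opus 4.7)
The plan is to construct an explicit inverse to $\tilde{\psi}$ using the map $\phi$ from Subsection~\ref{subsec:alg-phi}. Define $\phi_{\mathrm{As}} \colon \mathrm{As}(X) \to \mathrm{As}(\Qdl{X})$ inductively by
\[
\phi_{\mathrm{As}}(e) := e, \qquad \phi_{\mathrm{As}}(g \cdot x) := \phi_{\mathrm{As}}(g) \cdot \phi(g, x), \qquad \phi_{\mathrm{As}}(g \cdot x^{-1}) := \phi_{\mathrm{As}}(g) \cdot \phi(g \cdot x^{-1}, x)^{-1}
\]
for $g \in \mathrm{As}(X)$ and $x \in X$, and set $\tilde{\phi}(g, x) := \bigl( \phi_{\mathrm{As}}(g), \phi(g, x) \bigr)$. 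The aim is to prove that $\tilde{\psi}$ and $\tilde{\phi}$ are mutually inverse; bijectivity of $\psi_{\mathrm{As}}$ then follows by projecting to the first coordinate, since the first coordinate of $\tilde{\phi}$ depends only on $g$.

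First I would verify that $\phi_{\mathrm{As}}$ is well defined on $\mathrm{As}(X)$, i.e., that the defining relation $x \cdot (y \os x) = y \cdot (x \us y)$ is preserved. A direct computation, using
$\phi(g \cdot x, y \os x) = (y \os x) \os^{-1} x \diamond g^{-1} = \phi(g, y)$
together with the definition $x \s y = (x \us y) \os^{-1} y$, reduces the check to the statement that $\diamond g$ is a quandle automorphism of $\Qdl{X}$ for every $g \in \mathrm{As}(X)$. This in turn reduces to the single mixing identity $(x \s y) \os z = (x \os z) \s (y \os z)$ on $X$, which follows from the biquandle axioms (BQ1)--(BQ3). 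The verification is formally dual to the well-definedness check for $\psi_{\mathrm{As}}$ sketched in the text.

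Next, I would prove the two pointwise compatibility identities
$\phi \bigl( \psi_{\mathrm{As}}(p), \psi(p, a) \bigr) = a$
and
$\psi \bigl( \phi_{\mathrm{As}}(g), \phi(g, x) \bigr) = x$
by induction on the word length of $p \in \mathrm{As}(\Qdl{X})$ and $g \in \mathrm{As}(X)$, respectively; the base cases are immediate from $\psi(e, a) = a$ and $\phi(e, x) = x$, and the inductive steps use the recursive definitions together with the cancellation $(z \os w) \os^{-1} w = z$ from (BQ2) and the identity $\psi(p \cdot b^{-1}, b) = k \bigl( \psi(p, b) \bigr)$ derived from (BQ1)--(BQ2). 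A further induction on word length, invoking these pointwise identities at each step, then yields $\tilde{\phi} \circ \tilde{\psi} = \mathrm{id}$ and $\tilde{\psi} \circ \tilde{\phi} = \mathrm{id}$, and the lemma follows.

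The main obstacle is the well-definedness of $\phi_{\mathrm{As}}$, which rests on the mixing identity $(x \s y) \os z = (x \os z) \s (y \os z)$ and thus requires coordinating all three biquandle axioms simultaneously. Once this is in hand, the remainder is a careful but routine double induction modeled on Lemmas~3.3 and~3.6 of \cite{is}.
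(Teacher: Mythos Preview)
Your proposal is correct and follows essentially the same route as the paper: construct $\phi_{\mathrm{As}}$ and $\tilde{\phi}$ by the same inductive formulas, check well-definedness, and then verify $\tilde{\phi}\circ\tilde{\psi}=\mathrm{id}$ and $\tilde{\psi}\circ\tilde{\phi}=\mathrm{id}$ by induction on word length. The only cosmetic difference is that the paper runs a single simultaneous induction proving $\phi_{\mathrm{As}}\circ\psi_{\mathrm{As}}(p)=p$ and $\tilde{\phi}\circ\tilde{\psi}|_{\{p\}\times\Qdl{X}}=\mathrm{id}$ together, whereas you first isolate the pointwise identity $\phi(\psi_{\mathrm{As}}(p),\psi(p,a))=a$ and then feed it into a second induction; both organizations amount to the same computation.
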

\begin{proof}
We inductively define a map $\phi_{\mathrm{As}} \colon \mathrm{As}(X) \to \mathrm{As}(\Qdl{X})$ by 
\[\begin{array}{rcl}
\phi_{\mathrm{As}}(e) & := & e, \\
\phi_{\mathrm{As}}(g \cdot x) & := & \phi_{\mathrm{As}}(g) \cdot \phi(g, x), \\
\phi_{\mathrm{As}}(g \cdot x^{-1}) & := & \phi_{\mathrm{As}}(g) \cdot \phi(g \cdot x^{-1}, x)^{-1}
\end{array}\]
for $g \in \mathrm{As}(X)$ and $x \in X$. As in the case of $\psi_{\mathrm{As}}$, we can show that $\phi_{\mathrm{As}}$ is well defined. 
Let $\tilde{\phi} \colon \mathrm{As}(X) \times X \to \mathrm{As}(\Qdl{X}) \times \Qdl{X}$ be the map defined by $\tilde{\phi}(g,x) = ( \phi_{\mathrm{As}}(g), \phi(g,x) )$ for $g \in \mathrm{As}(X)$ and $x \in X$.

Let us simultaneously show that $\phi_{\mathrm{As}}$ is a left inverse of $\psi_{\mathrm{As}}$ and that $\tilde{\phi}$ is a left inverse of $\tilde{\psi}$ by an induction. First, we find from the definitions that $\phi_{\mathrm{As}} \circ \psi_{\mathrm{As}}(e) = e$ and $\tilde{\phi} \circ \tilde{\psi}|_{\{e\} \times \Qdl{X}} = \text{id}_{\{e\} \times \Qdl{X}}$. Next, under the assumption that $\phi_{\mathrm{As}} \circ \psi_{\mathrm{As}}(p) = p$ and $\tilde{\phi} \circ \tilde{\psi}|_{\{p\} \times \Qdl{X}} = \text{id}_{\{p\} \times \Qdl{X}}$ for certain $p \in \mathrm{As}(\Qdl{X})$, we have
\begin{align*}
\tilde{\phi} \circ \tilde{\psi}(p\cdot b, a) &= \tilde{\phi}(\psi_{\mathrm{As}}(p) \cdot \psi(p,b),\psi(p,a) \os \psi(p,b))\\
&= (\phi_{\mathrm{As}}\circ\psi_{\mathrm{As}}(p) \cdot \phi(\psi_{\mathrm{As}}(p),\psi(p,b)),\phi(\psi_{\mathrm{As}}(p) \cdot \psi(p,b),\psi(p,a) \os \psi(p,b)))\\
&= (p \cdot b, \phi(\psi_{\mathrm{As}}(p),\psi(p,a)))\\
&= (p \cdot b, a)
\end{align*}
for any $a,b \in \Qdl{X}$, where the third and the fourth equalities follows from the assumption. 
This means that $\tilde{\phi} \circ \tilde{\psi}|_{\{p \cdot b\} \times \Qdl{X}} = \text{id}_{\{p \cdot b\} \times \Qdl{X}}$ and hence $\phi_{\mathrm{As}} \circ \psi_{\mathrm{As}}(p\cdot b) = p \cdot b$. In a similar way, we can check that $\tilde{\phi} \circ \tilde{\psi}|_{\{p \cdot b^{-1}\} \times \Qdl{X}} = \text{id}_{\{p \cdot b^{-1}\} \times \Qdl{X}}$ and $\phi_{\mathrm{As}} \circ \psi_{\mathrm{As}}(p\cdot b^{-1}) = p \cdot b^{-1}$. Thus, we find that $\phi_{\mathrm{As}} \circ \psi_{\mathrm{As}} = \text{id}$ and $\tilde{\phi} \circ \tilde{\psi} = \text{id}$.
\par We also find $\psi_{\mathrm{As}} \circ \phi_{\mathrm{As}}$ and $\tilde{\psi} \circ \tilde{\phi}$ to be the identities, similarly.
\end{proof}

\begin{prop}\label{prop:local-bij}
The bijection between the shadow colorings induced by $\Psi$ is equal to $\tilde{\psi}$, i.e., for any $\col{C} \in \Col{\Qdl{X}}{D}$ we have
\[ {( \Psi(\col{C}) )}^S = \tilde{\psi}\circ\col{C}^S \qquad \in {\rm Map}(\col{SA}(D), \mathrm{As}(X) \times X).\]
\end{prop}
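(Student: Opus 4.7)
The plan is to verify the asserted equality coordinate-by-coordinate on each $\sigma \in \mathcal{SA}(D)$. Unpacking the definitions of the shadow coloring and of $\tilde{\psi}$, what must be checked is
\[
\bigl((\Psi(\col{C}))^R(\rho_\sigma),\, (\Psi(\col{C}))(\sigma)\bigr) = \bigl(\psi_{\mathrm{As}}(\col{C}^R(\rho_\sigma)),\, \psi(\col{C}^R(\rho_\sigma),\col{C}(\sigma))\bigr).
\]
The second-coordinate equality is immediate from Lemma~\ref{lem:psi=psi}, since $\Psi(\col{C}) = \Psi'(\col{C}) = \psi\circ \col{C}^S$ and $\col{C}^S(\sigma) = (\col{C}^R(\rho_\sigma),\col{C}(\sigma))$. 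The real content of the proposition is therefore the first-coordinate identity, which I would extract from the stronger claim that $(\Psi(\col{C}))^R(R) = \psi_{\mathrm{As}}(\col{C}^R(R))$ for \emph{every} region $R \in \mathcal{R}(D)$.

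I would establish this stronger claim by induction on the length of a dual-graph path from $R_\infty$ to $R$. The base case $R = R_\infty$ reduces to $e = \psi_{\mathrm{As}}(e)$, which holds by definition. For the inductive step, let $R'$ and $R$ be adjacent regions separated by a semi-arc $\sigma$ with $(\Psi(\col{C}))^R(R') = \psi_{\mathrm{As}}(\col{C}^R(R'))$ already known. If the normal vector of $\sigma$ points from $R'$ into $R$ (so $R' = \rho_\sigma$), set $p = \col{C}^R(R')$ and $a = \col{C}(\sigma)$; then the region-coloring rule gives $\col{C}^R(R) = p \cdot a$, while the second-coordinate identity gives $(\Psi(\col{C}))(\sigma) = \psi(p,a)$. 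Combining the inductive hypothesis with $(\Psi(\col{C}))^R(R) = (\Psi(\col{C}))^R(R')\cdot(\Psi(\col{C}))(\sigma)$ yields $\psi_{\mathrm{As}}(p)\cdot\psi(p,a)$, which is precisely $\psi_{\mathrm{As}}(p \cdot a) = \psi_{\mathrm{As}}(\col{C}^R(R))$ by the first defining relation of $\psi_{\mathrm{As}}$. If instead $R = \rho_\sigma$, set $q = \col{C}^R(R')$ and $a = \col{C}(\sigma)$, so that $\col{C}^R(R) = q \cdot a^{-1}$ and $(\Psi(\col{C}))(\sigma) = \psi(q \cdot a^{-1}, a)$; then $(\Psi(\col{C}))^R(R) = (\Psi(\col{C}))^R(R')\cdot(\Psi(\col{C}))(\sigma)^{-1} = \psi_{\mathrm{As}}(q) \cdot \psi(q \cdot a^{-1}, a)^{-1}$, which matches $\psi_{\mathrm{As}}(q \cdot a^{-1})$ by the second defining relation $\psi_{\mathrm{As}}(q \cdot a^{-1}) = \psi_{\mathrm{As}}(q) \cdot \psi(q \cdot a^{-1}, a)^{-1}$.

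I do not anticipate any substantive obstacle. Well-definedness of both $\col{C}^R$ and $(\Psi(\col{C}))^R$ on $\mathcal{R}(D)$ automatically makes the outcome of the induction at any given region path-independent, so the only care required is matching each orientation case of crossing a semi-arc with the appropriate clause in the definition of $\psi_{\mathrm{As}}$. In effect, the proposition amounts to observing that the two defining recursions of $\psi_{\mathrm{As}}$ were arranged exactly so that the region colorings intertwine through $\psi_{\mathrm{As}}$ along every edge of the dual graph.
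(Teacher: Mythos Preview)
Your proof is correct and follows essentially the same approach as the paper: the paper argues that $\psi_{\mathrm{As}} \circ \col{C}^R$ satisfies the region-coloring compatibility condition for $\Psi(\col{C})$ (which is exactly what your edge-by-edge induction verifies) and agrees with $(\Psi(\col{C}))^R$ at $R_\infty$, hence equals it by uniqueness. Your version simply unpacks this verification more explicitly.
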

\begin{proof}
By Lemma \ref{lem:psi=psi}, we have $\Psi(\col{C}) = \psi\circ\col{C}^S$. Furthermore, the definition of $\psi_{\mathrm{As}}$ implies that $\psi_{\mathrm{As}} \circ \col{C}^R$ defines a region coloring of $\psi \circ \col{C}^S$: the map $\psi_{\mathrm{As}} \circ \col{C}^R$ satisfies the second condition of the definition of the region coloring.
Since $\psi_{\mathrm{As}} \circ \col{C}^R(R_\infty) = e$ 
for the unbounded region $R_\infty$ of $D$, the map $\col{C}^R$ is the region coloring, 
which implies the required equation ${( \Psi(\col{C}) )}^S = \tilde{\psi}\circ\col{C}^S$.
\end{proof}

\subsection{Equivalence of two homotopy invariants}\label{subsec:equiv}
Let $X$ be a biquandle. We denote the universal covering of $BX$ by $\widetilde{BX}$ and that of $B^Q X$ by $\widetilde{B^Q X}$. We remark that $\widetilde{BX}$ (resp. $\widetilde{B^Q X}$) has a CW-complex structure induced by the covering map $\widetilde{BX} \to BX$ (resp. $\widetilde{B^QX} \to B^Q X$), where each $n$-cell is labeled by $\mathrm{As}(X) \times X^n$ (resp. $\mathrm{As}(X) \times X^n_{ND}$) since $\pi_1(BX) \cong \pi_1(B^Q X) \cong \mathrm{As}(X)$.

\begin{prop}
We have isomorphisms
\[ \psi_B: \widetilde{B\Qdl{X}} \to \widetilde{BX} \quad \text{and} \quad \psi^Q_B: \widetilde{B^Q \Qdl{X}} \to \widetilde{B^Q X} \]
of CW complexes.
\end{prop}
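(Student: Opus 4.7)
The plan is to define $\psi_B$ cell-by-cell and verify that it intertwines the cellular attaching maps of the two universal covers. An $n$-cell of $\widetilde{B\Qdl{X}}$ is labeled by a pair $(p;a_1,\dots,a_n) \in \mathrm{As}(\Qdl{X}) \times \Qdl{X}^n$; we send it to the $n$-cell of $\widetilde{BX}$ labeled by $(\psi_{\mathrm{As}}(p); \psi(p,a_1), \dots, \psi(p,a_n))$, using the identity map on the cube $[0,1]^n$. Bijectivity at the level of cells is immediate from Lemma~\ref{lem:psi-as-bij}, so if the definition turns out to respect the attaching maps, then $\psi_B$ is automatically a cellular homeomorphism.

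The crux is compatibility with the face identifications on the universal covers. The bottom face $t_i=0$ deletes the $i$-th entry, and compatibility there is tautological. For the top face $t_i=1$, working in the quandle $\Qdl{X}$ (so $a_j \os a_i = a_j$), the lifted identification in $\widetilde{B\Qdl{X}}$ sends the face to the $(n-1)$-cell labeled
\begin{equation*}
\bigl(p \cdot a_i;\, a_1 \s a_i, \dots, a_{i-1} \s a_i, a_{i+1}, \dots, a_n\bigr).
\end{equation*}
Applying $\psi_B$ to this label, we must match the $(n-1)$-cell of $\widetilde{BX}$ obtained as the top face of the image cell, namely
\begin{equation*}
\bigl(\psi_{\mathrm{As}}(p)\cdot\psi(p,a_i);\, \psi(p,a_1)\us\psi(p,a_i), \dots, \psi(p,a_{i-1})\us\psi(p,a_i), \psi(p,a_{i+1})\os\psi(p,a_i), \dots, \psi(p,a_n)\os\psi(p,a_i)\bigr).
\end{equation*}
This reduces to three identities: $\psi_{\mathrm{As}}(p\cdot a_i) = \psi_{\mathrm{As}}(p)\cdot\psi(p,a_i)$ (definition of $\psi_{\mathrm{As}}$); $\psi(p\cdot a_i,\, a_j \s a_i) = \psi(p,a_j)\us\psi(p,a_i)$ for $j<i$ (equation~(\ref{eq:us})); and $\psi(p\cdot a_i,\, a_j) = \psi(p,a_j)\os\psi(p,a_i)$ for $j>i$ (definition of $\psi$). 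Hence $\psi_B$ is a well-defined cellular bijection, and being the identity on each cube, it is a homeomorphism.

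For the biquandle version $\psi^Q_B$, we use that $\psi(p,\cdot)\colon \Qdl{X} \to X$ is a bijection for each fixed $p \in \mathrm{As}(\Qdl{X})$ (again by Lemma~\ref{lem:psi-as-bij}), hence it carries degenerate labels (those with $a_i=a_{i+1}$) to degenerate labels bijectively, and similarly for non-degenerate ones. Since the extra relation $\sim_D$ operates only on cube coordinates adjacent to repeated labels, and $\psi_B$ is the identity on cubes, it descends cleanly to a cellular isomorphism $\psi^Q_B\colon \widetilde{B^Q\Qdl{X}} \to \widetilde{B^QX}$. The main obstacle is the top-face bookkeeping above; fortunately every required identity was already proved in Section~\ref{sec:alg}, so no genuinely new algebraic content is needed.
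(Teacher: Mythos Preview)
Your proof is correct and follows essentially the same approach as the paper: define the map on labeled cubes via $(t;p,a_1,\dots,a_n)\mapsto(t;\psi_{\mathrm{As}}(p),\psi(p,a_1),\dots,\psi(p,a_n))$, invoke Lemma~\ref{lem:psi-as-bij} for bijectivity, and check compatibility with the face relations and with $\sim_D$. The only difference is that the paper simply asserts the compatibility (``we easily find $\psi_B^0$ compatible with the characteristic maps''), whereas you actually spell out the three identities needed for the $t_i=1$ face; this is a welcome addition rather than a departure in method.
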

\begin{proof}
We define a map $\psi_{B}^0: \bigsqcup_{n \geq 0} [0,1]^n \times \mathrm{As}(\Qdl{X}) \times \Qdl{X}^n \to \bigsqcup_{n \geq 0} [0,1]^n \times \mathrm{As}(X) \times X^n$ as
\[ \psi_{B}^0: (t; p, a_1, \dots, a_n) = (t; \psi_{\mathrm{As}}(p), \psi(p,a_1), \dots, \psi(p,a_n)) \]
for $t \in [0,1]^n, p \in \mathrm{As}(\Qdl{X}),$ and $a_1,\dots,a_n \in \Qdl{X}$. By Lemma \ref{lem:psi-as-bij}, this is a homeomorphism. We easily find $\psi_{B}^0$ compatible with the characteristic maps defining the CW-complex structures of $\widetilde{B\Qdl{X}}$ and $\widetilde{BX}$, and hence a map $\psi_B: \widetilde{B\Qdl{X}} \to \widetilde{BX}$ is uniquely defined so that a diagram
\[ \begin{CD}
\bigsqcup_{n \geq 0} [0,1]^n \times \mathrm{As}(\Qdl{X}) \times \Qdl{X}^n @>\psi_{B}^0 >> \bigsqcup_{n \geq 0} [0,1]^n \times \mathrm{As}(X) \times X^n \\
@VVV @VVV \\
\widetilde{B\Qdl{X}} @>\psi_B >> \widetilde{BX}
\end{CD} \]
commutes. Since $\psi_{B}^0$ is a homeomorphism, $\psi_B$ is an isomorphism of CW complexes. 
\par A proof for biquandle spaces is similar: we can check the compatibility of $\psi_B^0$ with a relation $\sim_{\tilde{D}}$, where $\sim_{\tilde{D}}$ is the relation of $[0,1]^n \times \mathrm{As}(X) \times X^n$ given by
\[(t;g,x) \sim_{\tilde{D}} (t';g',x') \quad \Leftrightarrow \quad (t;x) \sim_D (t',x') \;\text{and}\;g=g'\]
for $t, t'\in [0,1]^n, g,g' \in \mathrm{As}(X), x,x' \in X^n$ ($\sim_{\tilde{D}}$ is defined also for $[0,1]^n \times \mathrm{As}(\Qdl{X}) \times \Qdl{X}^n$), and then $\psi^Q_B: \widetilde{B^Q \Qdl{X}} \to \widetilde{B^Q X}$ is defined and is an isomorphism.
\end{proof}

We define an isomorphism $\psi_*$ as the composite 
\[ \pi_n(B^Q\Qdl{X}) \xrightarrow{(p_{\Qdl{X} *})^{-1}} \pi_n(\widetilde{B^Q\Qdl{X}}) \xrightarrow{\psi^Q_{B*}} \pi_n(\widetilde{B^QX}) \xrightarrow{p_{X*}} \pi_n(B^QX) \]
for $n \geq 2$, where $p_{\Qdl{X} }: \widetilde{B^Q \Qdl{X}} \to B^Q \Qdl{X}$ and $p_X: \widetilde{B^Q X} \to B^Q X$ are the covering maps. We use the same notation $\psi_*$ also for the induced isomorphism of the group rings. The following theorem means that the two homotopy invariants with respect to $\Qdl{X}$ and $X$ are equivalent under the identification of the homotopy groups via $\psi_*$.

\begin{theorem}\label{thm:bhi}
Let $X$ be a biquandle and $D$ a diagram of an oriented link. For any quandle coloring $\col{C} \in \Col{\Qdl{X}}{D}$, we have
\[ \psi_*((\Xi_{\Qdl{X}}(D))(\col{C})) = (\Xi_X(D))(\Psi(\col{C})) \quad \in \pi_2(B^Q X). \]
In particular, if $X$ is finite,
\[ \psi_* (\Xi_{\Qdl{X}}(D)) = \Xi_X(D) \quad \in \mathbb{Z}[\pi_2(B^Q X)]. \]
\end{theorem}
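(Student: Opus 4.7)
The plan is to lift both sides of the asserted equation to the universal covers, where the cellular isomorphism $\psi^Q_B \colon \widetilde{B^Q\Qdl{X}} \to \widetilde{B^QX}$ from the preceding proposition is available. Since $S^2$ is simply connected, the maps $(\Xi_{\Qdl{X}}(D))(\col{C}) \colon S^2 \to B^Q\Qdl{X}$ and $(\Xi_X(D))(\Psi(\col{C})) \colon S^2 \to B^QX$ each admit a unique lift $\widetilde{\xi}_1 \colon S^2 \to \widetilde{B^Q\Qdl{X}}$ and $\widetilde{\xi}_2 \colon S^2 \to \widetilde{B^QX}$ sending the $0$-cell corresponding to the unbounded region $R_\infty$ to the basepoint. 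By the definition of $\psi_*$ as $p_{X*} \circ \psi^Q_{B*} \circ (p_{\Qdl{X}*})^{-1}$, it suffices to show that $\psi^Q_B \circ \widetilde{\xi}_1$ is homotopic to $\widetilde{\xi}_2$; in fact I would argue they agree as cellular maps.

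Next I would read off the cell labels of each lift from the coloring data. The dual decomposition of $D \subset S^2$ gives $0$-cells for regions, $1$-cells for semi-arcs, and $2$-cells for crossings. The requirement that the lift starts at the basepoint, together with the rule that crossing a semi-arc $\sigma$ changes the $0$-cell label by right-multiplication by $\col{C}(\sigma) \in \mathrm{As}(\Qdl{X})$, forces each region $R$ to be sent to the $0$-cell labeled $\col{C}^R(R)$; by the same logic each $1$-cell is sent to the $1$-cell labeled $(\col{C}^R(\rho_\sigma), \col{C}(\sigma))$ and each $2$-cell to the $2$-cell labeled $(\col{C}^R(\rho), \col{C}(\sigma_1), \col{C}(\sigma_2))$ at the crossing. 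In short, the cell labels encountered by $\widetilde{\xi}_1$ are precisely those recorded by the shadow coloring $\col{C}^S$, and the analogous statement holds for $\widetilde{\xi}_2$ with $\col{C}^S$ replaced by $(\Psi(\col{C}))^S$.

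Finally, the explicit formula $\psi^0_B(t; p, a_1, \ldots, a_n) = (t; \psi_{\mathrm{As}}(p), \psi(p, a_1), \ldots, \psi(p, a_n))$ from the proof of the preceding proposition shows that $\psi^Q_B$ acts on cell labels exactly by $\tilde{\psi}$ entrywise. Combined with Proposition \ref{prop:local-bij}, which gives $(\Psi(\col{C}))^S = \tilde{\psi} \circ \col{C}^S$, this means $\psi^Q_B \circ \widetilde{\xi}_1$ and $\widetilde{\xi}_2$ send every cell of $S^2$ to the same cell of $\widetilde{B^QX}$ by the characteristic map, so they coincide. Projecting along $p_X$ yields the first equation, and the \lq\lq in particular\rq\rq\ statement follows by summing over $\col{C}$ and reindexing via the bijection $\Psi \colon \Col{\Qdl{X}}{D} \to \Col{X}{D}$ of Theorem \ref{thm:main}. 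The main obstacle I foresee is verifying cleanly that each lift is dictated cellwise by the shadow coloring; once that identification is in place, the result is a direct consequence of Proposition \ref{prop:local-bij} and the explicit form of $\psi^Q_B$.
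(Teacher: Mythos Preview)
Your proposal is correct and follows essentially the same route as the paper: lift to the universal covers, observe that the lifted map is governed cellwise by the shadow coloring $\col{C}^S$, apply Proposition~\ref{prop:local-bij} together with the explicit action of $\psi^Q_B$ on cell labels, and deduce the group-ring statement from Theorem~\ref{thm:main}. The only cosmetic difference is that you lift both sides and compare $\psi^Q_B \circ \widetilde{\xi}_1$ with $\widetilde{\xi}_2$, whereas the paper lifts only the $\Qdl{X}$-side and then projects down via $p_X$; these are equivalent verifications.
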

\begin{proof}
Let $\tilde{\Xi}_{\Qdl{X}}(\col{C}): S^2 \to \widetilde{B^Q\Qdl{X}}$ be the lift of $\Xi_{\Qdl{X}}(\col{C}): S^2 \to B^Q\Qdl{X}$
which maps $\infty \in S^2$ to the 0-cell labeled by the identity element $e \in {\rm As}(\Qdl{X})$,
where we omit ``$D$'' to simplify the notation. Here we should notice that by $\tilde{\Xi}_{\Qdl{X}}(\col{C})$ every 1-cell corresponding to $\sigma \in \col{SA}(D)$ is mapped to the 1-cell labeled by $\col{C}^S(\sigma)$ of $\widetilde{B^Q\Qdl{X}}$. 
Then $\psi^Q_{B}$ sends it to the one labeled by $\tilde{\psi} \circ \col{C}^S(\sigma)$, which is equal to ${( \Psi(\col{C}) )}^S(\sigma)$ by Proposition \ref{prop:local-bij}. Finally, the quotient map $p_X$ maps it to the one labeled by $(\Psi(\col{C}))(\sigma)$. The analogous things also hold for the 0- and 2-cells, and this implies the first equation: $p_{X*}\circ \psi^Q_{B*} \circ \tilde{\Xi}_{\Qdl{X}}(\col{C}) = \Xi_X(\Psi(\col{C}))$. The second follows from the first and Theorem \ref{thm:main} immediately.
\end{proof}

\begin{remark}
Let $X$ be a biquandle and $A$ an abelian group. 
For a biquandle $2$-cocycle $\theta \colon X \times X \to A$, 
let $\langle \bullet , \theta \rangle \colon H_2(B^Q X; \Z) \to A$ be the contraction by $\theta$ 
(through the Kronecker product). 
Then 
the biquandle cocycle invariant $\Phi_\theta(D) \in \Z[A]$ is derived from 
the biquandle homotopy invariant $\Xi_X(D) \in \Z [\pi_2(B^QX)]$ by 
\[
\Phi_\theta(D) = \langle \bullet , \theta \rangle \circ \mathfrak{H} (\Xi_X(D)) , 
\]
where a map $\mathfrak{H}_X$ is the Hurewicz homomorphism 
\[
\mathfrak{H}_X \colon \pi_2(B^QX) \to  H_2(B^Q X; \Z)
\] 
and the two maps in the above equation are naturally extended to group rings. 
We should note that Nosaka \cite{nos1} gave this interpretation 
when $X$ is a quandle. 

Let $Q$ be a quandle and $A$ an abelian group. 
For a shadow quandle $2$-cocycle 
$\theta \colon \mathrm{As}(Q) \times Q \times Q \to A$, 
let $\langle \bullet , \theta \rangle \colon H_2(\widetilde{B^Q Q}; \Z) \to A$ be the contraction by $\theta$ 
(through the Kronecker product). 
Then 
the shadow quandle cocycle invariant $\Phi_\theta^{S}(D) \in \Z[A]$ is derived from 
the quandle homotopy invariant $\Xi_Q (D) \in \Z [\pi_2(B^Q Q)]$ by 
\[
\Phi_\theta(D) = \langle \bullet , \theta \rangle \circ \widetilde{\mathfrak{H}}_Q (\Xi_Q(D)) , 
\]
where a map $\widetilde{\mathfrak{H}}_Q$ is the Hurewicz homomorphism (with local coefficients) 
\[
\widetilde{\mathfrak{H}}_Q \colon \pi_2(B^Q Q) \xrightarrow[\cong]{(p_{Q*})^{-1}} \pi_2(\widetilde{B^Q Q})\to  H_2(\widetilde{B^Q Q}; \Z)
\] 
and the two maps in the above equation are naturally extended to group rings.  
We should note that Nosaka \cite{nos2} gave this interpretation for surface links. 
\end{remark}

\begin{proof}[An alternative proof of Theorem~\ref{thm:bci}]
It holds that a diagram 
\[\begin{CD}
\pi_2(B^Q \mathcal{Q}(X)) @>{\widetilde{\mathfrak{H}}_{\mathcal{Q}(X)}}>> H_2(\widetilde{B^Q \mathcal{Q}(X)}; \Z) @>{\langle \bullet , \psi^* \theta \rangle}>> A \\
@V{\psi_*}V{\cong}V @V{ (p_X \circ \psi^Q_B)_* }VV @| \\
\pi_2(B^Q X) @>{\mathfrak{H}_X}>> H_2(B^Q X; \Z) @>\langle \bullet , \theta \rangle>> A 
\end{CD}\]
commutes. Hence we have 
\begin{align*}
\Phi_\theta(D) 
& = \langle \bullet , \theta \rangle \circ \mathfrak{H}_X (\Xi_X(D)) \\
& = \langle \bullet , \theta \rangle \circ \mathfrak{H}_X ( \psi_* (\Xi_{\Qdl{X}}(D)) ) \\
& = \langle \bullet , \psi^* \theta \rangle 
\circ \widetilde{\mathfrak{H}}_{\mathcal{Q}(X)} (\Xi_{\mathcal{Q}(X)}(D)) \\
& = \Phi_{\psi^* \theta}^{S} (D) ,
\end{align*}
where the second equality follows from Theorem~\ref{thm:bhi} and 
the third equality follows from the above commutativity. 
\end{proof}

\section{For surface links}\label{sec:2-dim}
A \textit{surface link} $L$ is a closed surface smoothly embedded in $\mathbb{R}^4$. In this paper, we always assume a surface link to be oriented. We take a generic projection 
from $\R^4$ to $\R^3$, 
and we call the image of $L$ with the information of the relative height, 
a (\textit{broken surface}) \textit{diagram}. 
Here the height information is usually 
indicated by removing the regular neighborhood of a double point in the lower sheets
along the double point curves. 
Using diagrams, we can define the following invariants of surface links as well as 
those of classical links: 
quandle (shadow) colorings \cite{cjkls, cks, cegs}, 
biquandle (shadow) colorings \cite{kkkl}, 
fundamental (bi)quandles \cite{car,ash}, 
quandle cocycle invariants \cite{cjkls}, 
shadow quandle cocycle invariants \cite{cks, cegs}, 
biquandle cocycle invariants \cite{kkkl}, 
quandle homotopy invariants \cite{nos2},  
biquandle homotopy invariants, 
and so on. 
We note that a rigorous definition of biquandle homotopy invariants 
do not exist in any literature, 
but it could be easily written down explicitly if  
we mimic the definition of quandle homotopy invariants 
by replacing quandle spaces in the definition with biquandle spaces. 
%
For more details on studies of surface links using quandle theory, 
we refer to \cite{cks-book, kam}. 
\par The theorems for classical links in this paper are similarly shown with a little modification; e.g., taking a diagram without a branch point \cite{cs, hir} makes it easier to construct the one-to-one correspondence between biquandle colorings and quandle colorings. Here, we just state the results:

\begin{theorem}\label{thm:main2}
Let $X$ be a biquandle and $D$ be a diagram of an oriented surface link. 
Then there exists a one-to-one correspondence 
between $\Col{X}{D}$ and $\Col{\Qdl{X}}{D}$. 
\end{theorem}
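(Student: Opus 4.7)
The plan is to mimic the classical proof of Theorem \ref{thm:main} almost verbatim, replacing arcs and crossings by sheets and double point curves. The main preparatory step is to appeal to the result of Carter--Saito \cite{cs} (cf.~\cite{hir}) to reduce to a broken surface diagram $D$ having no branch points; this is essential because a branch point would force a semi-sheet to locally coincide with one of its neighbors, obstructing the natural parallel push-off used to form the doubled diagram.

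First I would construct $\Psi \colon \Col{\Qdl{X}}{D} \to \Col{X}{D}$ by the same three-step recipe. Let $-D^v$ denote the broken surface diagram obtained by reversing orientation and swapping over/under information along every double point curve, and let $-D^h$ be its image under a reflection in $\R^3$. Pushing $-D^v$ slightly into the normal direction of $D$ produces the doubled diagram $W(D)$, and a $\Qdl{X}$-coloring of $D$ extends uniquely to an $X$-coloring on $W(D)$ whose parallel pairs of semi-sheets carry the same color away from the double point curves (this uses nothing about the dimension, only the biquandle axioms around a double point curve and around a triple point). This gives $\Psi_1$. For $\Psi_2$, since $W(D)$ and $D \sqcup -D^h$ represent the same surface link, one chooses an isotopy that flips $-D^v$ and translates it away; Roseman's theorem guarantees this is realized by a finite sequence of Roseman moves, each of which induces a bijection on $X$-colorings. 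Finally $\Psi_3$ restricts to $D$.

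Next, $\Phi \colon \Col{X}{D} \to \Col{\Qdl{X}}{D}$ would again be defined by pushing each semi-sheet along an arc from its specified side (the side the co-orientation points away from) out to the unbounded region of $\R^3$ and reading off the resulting color. The well-definedness (independence of the pushing-out path) should follow from a homotopy argument analogous to Lemma \ref{lem:phi1}: any two such arcs cobound a disk in $\R^3$ that can be put into generic position with the broken surface diagram, and the intermediate crossings either come in cancelling pairs or contribute trivially by the biquandle axioms. One then verifies that $\Phi(\mathcal{C})$ descends through $\mathcal{SA}(D) \to \mathcal{A}(D)$ and satisfies the $\Qdl{X}$-coloring condition across each double point curve, exactly as in Lemma \ref{lem:phi2}.

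To finish, I would slice $D$ by a generic family of horizontal planes so that between consecutive critical levels $D$ looks like a product of a trivial tangle diagram with an interval. On each such ``trivial slab'' the entire argument of Section \ref{subsec:proof} applies pointwise in the slicing parameter: pulling away the backward copy from $W(D)$ and then pushing out restores each sheet's color, because going over or under a parallel pair with opposite orientations leaves the color unchanged. Hence $\Phi \circ \Psi = \mathrm{id}$, and $\Phi$ is injective because the colors on the ``leftmost'' semi-sheets of a trivial slab determine the coloring on the whole slab. The hardest part, and the reason for the no-branch-point assumption, is to verify the local compatibility around triple points and along the endpoints of double point curves where sheets meet tangentially to the pushing direction; the axiom (BQ3), together with the fact that the three double point curves through a triple point inherit mutually compatible co-orientations, should handle the triple-point case in precisely the same manner as Reidemeister III handled classical crossings.
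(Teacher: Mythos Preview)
Your proposal is correct and follows essentially the same approach as the paper, which does not spell out a proof but simply remarks that the classical argument carries over once one uses Carter--Saito \cite{cs} (cf.~\cite{hir}) to pass to a branch-point-free diagram. You have correctly identified this reduction and the appropriate higher-dimensional substitutes (Roseman moves in place of Reidemeister moves for $\Psi_2$, slicing into trivial slabs for the verification that $\Phi \circ \Psi = \mathrm{id}$ and that $\Phi$ is injective).
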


\begin{theorem}
For a biquandle $X$ and an abelian group $A$, 
let $\theta \colon X \times X \times X \to A$ be a biquandle $3$-cocycle. 
Then, for a diagram $D$ of an oriented surface link, 
the biquandle cocycle invariant with respect to $\theta$ 
can be considered as the shadow quandle cocycle invariant with respect to 
$\psi^* \theta$. 
In particular, we have 
\[\Phi_{\theta}(D) = \Phi_{\psi^* \theta}^{S} (D)\] 
in $\Z[A]$ if $X$ is finite.
\end{theorem}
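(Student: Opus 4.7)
The plan is to mimic the proof of Theorem~\ref{thm:bci} for classical links, with the appropriate dimensional shift: for surface links, cocycle invariants are computed by summing weights over triple points of a broken surface diagram, and the relevant cocycle is a $3$-cocycle rather than a $2$-cocycle. The three ingredients needed are the surface-link correspondence $\Psi \colon \Col{\Qdl{X}}{D} \to \Col{X}{D}$ from Theorem~\ref{thm:main2}, the algebraic formula $\Psi(\mathcal{C}) = \psi \circ \mathcal{C}^S$ from Lemma~\ref{lem:psi=psi}, and the surface-link analog of Proposition~\ref{prop:local-bij}.

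First, I would verify that the algebraic description of $\Psi$ carries over to broken surface diagrams. The maps $\psi$, $\psi_{\mathrm{As}}$, and $\tilde{\psi}$ are purely algebraic and do not depend on the dimension of the diagram; region colorings and shadow colorings extend to broken surface diagrams in the standard way (with semi-sheets in place of semi-arcs). Then the surface analog of Proposition~\ref{prop:local-bij} states that $(\Psi(\mathcal{C}))^S = \tilde{\psi} \circ \mathcal{C}^S$, and its proof is a direct transcription of the classical argument, since the condition defining a region coloring concerns only the passage of a semi-sheet between two adjacent $3$-dimensional regions.

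Second, I would perform the local weight comparison at triple points. At a triple point of $D$, three transverse sheets meet, and the biquandle $3$-cocycle invariant assigns the weight $\pm \theta(x_i, x_j, x_k)$, where $x_i, x_j, x_k$ are the $X$-colors of three specified semi-sheets under the biquandle coloring $\Psi(\mathcal{C})$. The shadow quandle cocycle invariant with respect to $\psi^* \theta$ assigns the weight
\[
\pm (\psi^* \theta)(p, a_i, a_j, a_k) = \pm \theta\bigl(\psi(p, a_i), \psi(p, a_j), \psi(p, a_k)\bigr),
\]
where $p \in \mathrm{As}(\Qdl{X})$ is the color of the specified region and $a_i, a_j, a_k$ are the $\Qdl{X}$-colors of the corresponding semi-sheets. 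By the surface analog of Proposition~\ref{prop:local-bij}, $\psi(p, a_\bullet) = x_\bullet$ at every semi-sheet, so the two weights agree triple-point by triple-point. Summing over all triple points and all colorings, together with the bijection $\Psi$ from Theorem~\ref{thm:main2}, yields the equality $\Phi_\theta(D) = \Phi_{\psi^* \theta}^S(D)$ in $\Z[A]$.

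The main obstacle is not the weight comparison itself, which is essentially algebraic, but rather making the surface-link version of $\Psi$ and the shadow formalism precise, especially near branch points where the combinatorial structure degenerates. As remarked in Section~\ref{sec:2-dim}, this is handled by choosing a diagram $D$ without branch points following \cite{cs, hir}; once this reduction is made, the doubling construction, the pull-away, and the push-out procedures from Section~\ref{sec:corr} adapt straightforwardly, and the algebraic identities from Section~\ref{sec:alg} apply verbatim. Alternatively, one may deduce the theorem from the surface-link analog of Theorem~\ref{thm:bhi} via the Hurewicz homomorphism, exactly as in the alternative proof of Theorem~\ref{thm:bci} given at the end of Section~\ref{sec:hi}.
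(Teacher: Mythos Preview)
Your proposal is correct and follows exactly the approach the paper indicates: Section~\ref{sec:2-dim} states that the surface-link theorems ``are similarly shown with a little modification; e.g., taking a diagram without a branch point \cite{cs, hir},'' and your proof is precisely the dimensional shift of the proof of Theorem~\ref{thm:bci}, comparing weights triple-point by triple-point via $\Psi(\mathcal{C}) = \psi \circ \mathcal{C}^S$ and invoking the bijection of Theorem~\ref{thm:main2}. Your mention of the alternative route through the surface analog of Theorem~\ref{thm:bhi} also matches the paper's alternative proof of Theorem~\ref{thm:bci}.
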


\begin{theorem}
Let $X$ be a biquandle and $D$ a diagram of an oriented surface link. For any quandle coloring $\col{C} \in \Col{\Qdl{X}}{D}$, we have
\[ \psi_*((\Xi_{\Qdl{X}}(D))(\col{C})) = (\Xi_X(D))(\Psi(\col{C})) \quad \in \pi_3(B^Q X). \]
In particular, if $X$ is finite,
\[ \psi_* (\Xi_{\Qdl{X}}(D)) = \Xi_X(D) \quad \in \mathbb{Z}[\pi_3(B^Q X)]. \]
\end{theorem}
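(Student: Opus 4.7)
The plan is to imitate the proof of Theorem~\ref{thm:bhi} verbatim, with only the dimensional count shifted by one, after first pinning down a workable description of the map $\Xi_X(D)(\mathcal{C})\colon S^3 \to B^QX$ attached to a broken surface diagram. Concretely, take a broken surface diagram $D$ in $S^3 = \mathbb{R}^3 \cup \{\infty\}$ (we may assume $D$ has no branch point, following \cite{cs,hir}, which makes the dual cell structure straightforward). Regions of $\mathbb{R}^3\setminus D$ correspond to $0$-cells, semi-sheets to $1$-cells, double point curves to $2$-cells, and triple points to $3$-cells; the labels in $X$, $X^2$, $X^3$ are read off from the biquandle coloring $\mathcal{C}$ (respectively shadow coloring $\mathcal{C}^S$) in the obvious way, using the conventions that already appear in the classical case.

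Next, lift to the universal cover: define $\widetilde{\Xi}_{\mathcal{Q}(X)}(\mathcal{C})\colon S^3 \to \widetilde{B^Q\mathcal{Q}(X)}$ to be the unique lift of $\Xi_{\mathcal{Q}(X)}(\mathcal{C})$ sending $\infty$ to the $0$-cell labeled by $e\in\mathrm{As}(\mathcal{Q}(X))$. By the cellular description, every $n$-cell ($n\leq 3$) is sent to the $n$-cell of $\widetilde{B^Q\mathcal{Q}(X)}$ labeled by the tuple $(\mathcal{C}^R(\rho), \mathcal{C}(\sigma_1),\ldots,\mathcal{C}(\sigma_n))$ built from the shadow coloring around the corresponding stratum of $D$. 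Then apply the CW-isomorphism $\psi_B^Q\colon \widetilde{B^Q\mathcal{Q}(X)}\to \widetilde{B^QX}$: on each such cell, $\psi_B^Q$ sends the label to $(\psi_{\mathrm{As}}(p),\psi(p,a_1),\ldots,\psi(p,a_n))$, so the image cell is exactly the one prescribed by the shadow coloring of the biquandle coloring $\Psi(\mathcal{C})$, provided we know that $\widetilde{\psi}\circ\mathcal{C}^S = (\Psi(\mathcal{C}))^S$ as maps on the strata.

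This last identification is the only place where anything needs to be checked afresh. The argument of Proposition~\ref{prop:local-bij} carries over unchanged: its essential content is that $\psi_{\mathrm{As}}\circ\mathcal{C}^R$ is the region coloring associated to $\Psi(\mathcal{C})=\psi\circ\mathcal{C}^S$, and this uses only the inductive definition of $\psi_{\mathrm{As}}$ together with the normalization at the unbounded region, neither of which is sensitive to the dimension of $D$. Once this is in hand, composing with the covering projection $p_X\colon \widetilde{B^QX}\to B^QX$ yields $p_{X*}\circ \psi_{B*}^Q\circ \widetilde{\Xi}_{\mathcal{Q}(X)}(\mathcal{C}) = \Xi_X(\Psi(\mathcal{C}))$ in $\pi_3(B^QX)$, which is the first asserted equation; the second follows by summing over $\Col{\mathcal{Q}(X)}{D}$ and invoking Theorem~\ref{thm:main2} to replace the sum by one over $\Col{X}{D}$.

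The main obstacle is pre-theoretical rather than proof-theoretical: the paper itself flags that biquandle homotopy invariants for surface links have no published rigorous definition, so one must first spell out the $\pi_3$-valued invariant $\Xi_X(D)$ unambiguously (in particular, specifying the cellular image of triple points and verifying independence from Roseman-move choices, parallel to \cite{nos2}). Once the definition is fixed so that the cellular labeling by shadow colors is natural in the biquandle, every remaining step — the lift, the application of $\psi_B^Q$, and the cell-by-cell matching — is purely formal and does not require any calculation beyond what was already done in the $\pi_2$ case.
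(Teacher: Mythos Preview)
Your proposal is correct and follows precisely the approach the paper indicates: Section~\ref{sec:2-dim} does not give a separate proof but states that the surface-link theorems ``are similarly shown with a little modification; e.g., taking a diagram without a branch point,'' and your argument is exactly the dimension-shifted transcription of the proof of Theorem~\ref{thm:bhi}, including the lift to the universal cover, the cell-by-cell application of $\psi_B^Q$, and the invocation of (the surface analogue of) Proposition~\ref{prop:local-bij}. Your observation that the only genuinely new ingredient is a careful definition of $\Xi_X(D)$ at triple points is also consistent with the paper's own remark that biquandle homotopy invariants for surface links lack a published rigorous definition.
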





\section*{Acknowledgments}
The authors thank Seung Yeop Yang for letting us know the reference \cite{f} of biquandle spaces.  
The first-named author has been supported in part by 
Grant-in-Aid for JSPS Fellows, (No.~JP16J01183), 
Japan Society for the Promotion of Science. 
The second-named author has been supported in part by 
the Grant-in-Aid for Scientific Research (C), (No.~JP17K05242), 
Japan Society for the Promotion of Science.


\appendix
\section{The topological biquandle of an oriented link}\label{sec:app}

In this appendix, we point out that the topological biquandle is not isomorphic to 
the fundamental biquandle for the trivial knot. 
The \textit{topological biquandle} $\widehat{\mathcal{B}}_L$ of an oriented link $L$ 
is introduced in \cite{hor}. 
As a set, it is defined as the quotient of the set of pairs of two paths
$$\{(a_0,a_1) \mid a_0,a_1: [0,1] \to E_L, a_0(0) = a_1(0) \in \partial E_L, a_0(1) = p, a_1(1) =q\}$$
by a suitable homotopy equivalence, 
where $E_L$ is (the closure of) an exterior of $L$ and $p,q \in E_L$ are two fixed points. 
The two binary operations on $\widehat{\mathcal{B}}_L$ are topologically defined 
in a way similar to the topological definition of the fundamental quandle $Q_L$ of $L$. 
We can check that $\widehat{\mathcal{B}}_L$ is isomorphic to a biquandle 
$\mathrm{As}(Q_L) \times Q_L$ with two binary operations defined by 
$$ 
(g,a) \us (h,b) := (g \cdot b, a \s b) \quad \text{and} \quad 
(g,a) \os (h,b) := ( (h \cdot b \cdot h^{-1}) \cdot g, a)
$$
for $(g,a), (h,b) \in \mathrm{As}(Q_L) \times Q_L$
through an isomorphism sending (the equivalence class of) 
an element $(a_0,a_1)$ in $\widehat{\mathcal{B}}_L$ to 
$(l_\infty  a_1^{-1}  a_0, a_0)$ in $\mathrm{As}(Q_L) \times Q_L$,
where $l_\infty$ is a fixed path from $p$ to $q$ in the exterior of a $3$-ball 
whose interior contains the whole of $L$. 
We should note 
that $\mathrm{As}(Q_L)$ is naturally isomorphic to $\pi_1(\R^3 \setminus L, p)$, 
and that the definition of biquandles in \cite{hor} is different from ours 
but they are equivalent. 
She uses Kauffman's corner operations \cite{fjk}, 
the up operation $\up{\vbox to 1.1ex {\, }}$ and the down operation $\dow{\vbox to 1.1ex {\, }}$, 
which can be written as 
$$x \up{y} = x \us (y \os^{\, -1} x) \quad \text{and} \quad x \dow{y} = x \os^{\, -1} y$$
for elements $x,y$ in a given biquandle 
in terms of our binary operations $\os$ and $\us$.

For example, 
the topological biquandle $\widehat{\mathcal{B}}_K$ of the trivial knot $K$ 
is isomorphic to 
a biquandle $\Z$ with two binary operations 
defined by 
$$x \us y := x+1 \quad \text{and} \quad  x\os y := x+1$$ 
for $x,y \in \mathbb{Z}$, 
since $Q_K$ is isomorphic to the trivial quandle of one element and 
$\mathrm{As}(Q_K)$ is isomorphic to the infinite cyclic group. 
On the other hand, 
the fundamental biquandle of the trivial knot $K$ is isomorphic to 
the free biquandle $F$ of one generator. 
Although the right multiplications $\us x$ (and also $\os x$) are independent 
of the choice of $x$ in the biquandle $\Z$, 
we see that this is not the case in the free biquandle $F$ as follows. 
Since a biquandle $\Z^2$ with two binary operations, defined by 
$$(x,a) \us (y,b) := (x+1, a+y) \quad \text{and} \quad 
(x,a) \os (y,b) := (x+1, a+y)$$ 
for $(x,a),(y,b) \in \Z^2$, can be generated by one element $(0,0)$, 
we have a surjective homomorphism $F \to \Z^2$ 
and it holds that $\us (x,a) \neq \us (y,b)$ if $x \neq y$ in its image. 
This implies that the biquandle $\Z$ is not isomorphic to the free biquandle $F$, 
and hence we conclude that the topological biquandle $\widehat{\mathcal{B}}_K$ 
is not isomorphic to the fundamental biquandle of the trivial knot $K$.




\end{document}